\theoremstyle{plain}
\newtheorem{theorem}{Theorem}[section]
\newtheorem{lemma}[theorem]{Lemma}
\newtheorem{corollary}[theorem]{Corollary}
\newtheorem{proposition}[theorem]{Proposition}
\theoremstyle{definition}
\newtheorem{definition}[theorem]{Definition}
\newtheorem{assumption}[theorem]{Assumption}
\theoremstyle{remark}
\newtheorem{remark}[theorem]{Remark}
\numberwithin{equation}{section}
\newcommand{\bC}{\mathbb{C}}
\newcommand{\bN}{\mathbb{N}}
\newcommand{\bR}{\mathbb{R}}
\newcommand{\bH}{\mathbb{H}}
\newcommand{\bZ}{\mathbb{Z}}
\newcommand\cD{\mathcal{D}}
\newcommand\cM{\mathcal{M}}
\newcommand\cU{\mathcal{U}}
\newcommand\sL{\mathscr{L}}
\newcommand\sW{\mathscr{W}}
\newcommand\sH{\mathscr{H}}
\def\dashint{\operatorname%
{\,\,\text{\bf--}\kern-.98em\DOTSI\intop\ilimits@\!\!}}
\begin{document}

\title[Degenerate parabolic equations]{Degenerate linear parabolic equations in divergence form on the upper half space}

\author[H. Dong]{Hongjie Dong}
\address[H. Dong]{Division of Applied Mathematics, Brown University, 182 George Street, Providence RI 02912, USA}
\email{hongjie\_dong@brown.edu}

\author[T. Phan]{Tuoc Phan}
\address[T. Phan]{Department of Mathematics, University of Tennessee, 227 Ayres Hall,
1403 Circle Drive, Knoxville, TN 37996-1320, USA}
\email{phan@math.utk.edu}

\author[H. V. Tran]{Hung Vinh Tran}
\address[H. V. Tran]{Department of Mathematics, University of Wisconsin-Madison, Van Vleck Hall
480 Lincoln Drive
Madison, WI  53706, USA}
\email{hung@math.wisc.edu}

\thanks{
H. Dong is partially supported by the NSF under agreement DMS-2055244,
the Simons Foundation, grant \# 709545, and a Simons fellowship.
T. Phan is partially supported by the Simons Foundation, grant \# 354889.
H. Tran is supported in part by NSF CAREER grant DMS-1843320 and a Simons Fellowship.
}
\subjclass[2020]{35K65, 35K67, 35K20, 35D30}
\keywords{Degenerate  linear parabolic equations; divergence form; boundary regularity estimates; existence and uniqueness; weighted Sobolev spaces}

\begin{abstract}
We study a class of second-order degenerate linear parabolic equations in divergence form in $(-\infty, T) \times \bR^d_+$ with homogeneous Dirichlet boundary condition on $(-\infty, T) \times \partial \bR^d_+$, where $\bR^d_+ = \{x \in \bR^d\,:\, x_d>0\}$ and $T\in {(-\infty, \infty]}$ is given.
The coefficient matrices of the equations are the product of $\mu(x_d)$ and bounded uniformly elliptic matrices, where $\mu(x_d)$ behaves like $x_d^\alpha$ for some given $\alpha \in (0,2)$, which are degenerate on the boundary $\{x_d=0\}$ of the domain.
Under a partially VMO assumption on the coefficients, we obtain the wellposedness and regularity of solutions in weighted Sobolev spaces.
Our results can be readily extended to systems.
 \end{abstract}

\maketitle

%%%%%%%%%%%%%%%%%%%%%%%%%%%%%%%%%
 \section{Introduction}
\subsection{Setting}
Let $T\in (-\infty,\infty]$, $d\in \bN$, and $\Omega_T=(-\infty,T)\times \bR^d_+$, where $\bR^d_+ = \bR^{d-1}  \times \bR_+$ with $\bR_+ = (0, \infty)$.
Let $(a_{ij}): \Omega_T \rightarrow \mathbb{R}^{d\times d}$ be measurable and satisfy the uniform ellipticity and boundedness conditions with the ellipticity constant $\nu \in (0,1)$
\begin{equation} \label{ellipticity-cond}
\nu|\xi|^2 \leq a_{ij}(z) \xi_i \xi_j, \quad |a_{ij}(z)| \leq \nu^{-1}, \quad \forall \ z \in \Omega_T,
\end{equation}
for all $\xi = (\xi_1, \xi_2, \ldots, \xi_d) \in \bR^d$.
Also, let $c_0: \Omega_T \rightarrow \bR$ and  $\mu: \bR_+ \rightarrow \bR$ be measurable functions satisfying
\begin{equation} \label{c-mu.cond}
\nu  \leq c_0(z), \ \frac{\mu(x_d)}{x_d^\alpha}  \leq \nu^{-1}, \quad \forall \ x_d \in \bR_+, \quad \forall \ z \in \Omega_T,
\end{equation}
where $\alpha\in (0,2)$ is a fixed constant.
For $\lambda \geq 0$, let $\sL$ be the second-order linear operator  with degenerate coefficients defined by
\[
\sL u = u_t+\lambda c_0(z) u-\mu(x_d) D_i(a_{ij}(z)D_j u), \quad z= (t, x', x_d) \in \Omega_T.
\]
We study a class of equations in the form
\begin{equation}
                    \label{eq6.12}
                    \left\{
                    \begin{array}{rccl}
\sL u & = & \mu(x_d) D_i F + f &\quad\text{in}\,\,\Omega_T, \\ u & = & 0 & \quad \text{on}\,  \, (-\infty,T)\times \partial \bR^d_+.
\end{array} \right.
\end{equation}
Here, in \eqref{eq6.12},  $f :\Omega_T \rightarrow \bR$ and $F = (F_1, F_2, \ldots, F_d): \Omega_T \rightarrow \bR^d$ are given measurable functions, and $u: \Omega_T \rightarrow \bR$ is an unknown function.

It important to note that \eqref{eq6.12} has a natural scaling
$$
(t,x)\to (s^{2-\alpha}t,sx),\quad s>0.
$$
Moreover,  the PDE in \eqref{eq6.12} can be written into the following one in which the coefficients become singular on the boundary  $\{x_d =0\}$ of the domain
\begin{equation} \label{sing-PDE}
 \mu(x_d)^{-1}\big( u_t + \lambda c_0(z) u\big) - D_i\big( a_{ij} (z) D_j u + F_i)= \mu(x_d)^{-1} f \quad \text{in} \quad \Omega_T.
\end{equation}
The PDE \eqref{sing-PDE} will be used in our definition of weak solutions of \eqref{eq6.12}, in which the integration by parts is applied to the  terms  $\mu(x_d)^{-1}u_t $ and $D_i\big( a_{ij} (z) D_j u + F_i)$. Also, note that in \eqref{sing-PDE}, the coefficients $\mu(x_d)^{-1}$ and $\mu(x_d)^{-1} c_0(z)$ are not locally integrable near $\{x_d =0\}$ when $\alpha \in [1,2)$.

The aim of this paper is to show that for any $p\in (1,\infty)$,  under certain regularity assumption on $(a_{ij})$,
\begin{equation*}
\begin{split}
& \|Du\|_{L_p(\Omega_T)} +   \sqrt{\lambda} \| x_d^{-\alpha /2} u\|_{L_p(\Omega_T)}   \leq  N \big(\|F\|_{L_p(\Omega_T)}  +   \|g\|_{L_p(\Omega_T)} \big),
 \end{split}
\end{equation*}
where $N = N(\nu, d, \alpha, p)>0$, and $\lambda>0$ is sufficiently large, and $g = x_d^{1-\alpha}|f_1|+ \lambda^{-1/2} x_d^{-\alpha/2} |f_2|$ for $f = f_1 + f_2$.
See Theorem \ref{main-thrm} for the precise statements.
We also obtain a similar but more general weighted estimate (see Theorem \ref{thrm-2}).
To the best of our knowledge, this paper is the first one in which the wellposedness and regularity of solutions to the general  degenerate linear parabolic equation \eqref{eq6.12} is studied.
The above weighted $W^{1,p}$-estimate is also new in the literature.
A specific case where $\mu(x_d)=x_d$ was studied recently in our unpublished paper \cite{PhanTran}.

\subsection{Related literature}
The literature on regularity theory for degenerate elliptic and parabolic equations is vast, and we will only describe results that are related to \eqref{eq6.12}.
The H\"{o}lder regularity estimates for solutions to elliptic equations with singular and degenerate coefficients, which are $A_2$-Muckenhoupt weights, were obtained in \cite{Fabes, FKS}.
See also  the books \cite{Fichera, OR} and \cite{KoNi, MuSt1, MuSt2, Lin, Wang2, JinXiong, Sire-1, Sire-2} and the references therein for other results on the wellposedness,  H\"{o}lder, and Schauder regularity estimates for various  classes of degenerate equations.

The following equation, which is closely related to \eqref{eq6.12}, was studied much in the literature
\begin{equation} \label{lit-eqn}
 u_t(z) + \lambda u(z) - x_d \Delta u -\beta D_d u=f(z) \quad \text{in} \quad \Omega_T,
\end{equation}
where $\lambda \geq 0$ and $\beta>0$ are given constants.
Note that the requirement that $\beta>0$ is essential in the analysis of  \eqref{lit-eqn}, which is an important prototype equation appearing in the study of porous media equations and parabolic Heston equations.
The Schauder a priori estimates  in weighted H\"older spaces for solutions to \eqref{lit-eqn} and more general equations of this type were obtained in \cite{DaHa, FePo}; and the weighted $W^{2,p}$-estimates for solutions were obtained in \cite{Koch}.
Thanks to its special features, the boundary condition of \eqref{lit-eqn} on $\{x_d=0\}$ may be omitted. For us, we impose the homogenous Dirichlet boundary condition $u=0$ on $\{x_d=0\}$ in \eqref{eq6.12}, which is natural in our setting (see \cite[Theorem 2.1]{PhanTran}).
Because of the different natures of the equations, our methods and the obtained $W^{1,p}$-estimates are rather different from those in  \cite{DaHa, Koch, FePo} with different weights, and  to the best of our knowledge, they are new in the literature.

Our main motivation to study \eqref{eq6.12} comes from the analysis of degenerate viscous Hamilton-Jacobi equations.
A model equation of this kind is
\begin{equation} \label{HJB-eqn}
 u_t(z) + \lambda u(z) + H(z,Du) - x_d^\alpha \Delta u =0 \quad \text{in} \quad \Omega_T,
\end{equation}
where $H:\Omega_T \times \bR^d \to \bR$ is a given smooth Hamiltonian.
Here, $\lambda \geq 0$ and $\alpha \in (0,2)$ are given.
If $H(z,p)$ does not depend on $p$ for $(z,p) \in \Omega_T \times \bR^d$, then \eqref{lit-eqn} becomes a special case of \eqref{eq6.12}.
For typical initial-value problems of viscous Hamilton-Jacobi equations with possibly degenerate and bounded diffusions, we often have wellposedness of viscosity solutions, and such solutions are often Lipschitz in $z$ (see \cite{CIL, AT} and the references therein).
However, finer regularity of solutions is not very well understood in the literature, and in particular, optimal regularity of solutions to \eqref{HJB-eqn} near $\{x_d=0\}$ has not been investigated.
Besides, the growth of $x_d^\alpha$ in the diffusion coefficients at infinity has to be treated carefully.
As mentioned, a specific case of \eqref{eq6.12} where $\mu(x_d)=x_d$ was studied recently in our unpublished paper \cite{PhanTran}.
In this case, the equation also has a connection to the Wright-Fisher equation arising in population biology, for which the fundamental solution was studied in \cite{ChenStroock}. See also \cite{EM10} and the references therein.
We will study regularity of solutions to \eqref{HJB-eqn} and related PDEs in the future.

It is worth noting that similar results on the wellposedness and regularity estimates in weighted Sobolev spaces for equations with singular-degenerate coefficients were established in a series of papers \cite{DP20, Dong-Phan-1, Dong-Phan-2, Dong-Phan-3}.
The weights of singular/degenerate coefficients of $u_t$ and $D^2u$ in these papers appear in a balanced way, which plays a crucial role in the analysis and functional space settings.
In fact,  Harnack's inequalities were proved to be false in certain cases if the balance is lost in \cite{Chi-Se-1, Chi-Se-2}.
Of course, \eqref{eq6.12} does not have this balance structure, and our analysis is quite different from those in \cite{DP20, Dong-Phan-1, Dong-Phan-2, Dong-Phan-3}.

\subsection{Ideas of the proof}
Our proof is based on a unified kernel-free approach developed by Krylov in \cite{Krylov-07} for linear nondegenerate elliptic and parabolic equations with coefficients which are in the class of VMO with respect to the space variables and are allowed to be merely measurable in the time variable.
His proof relies on mean oscillation estimates, and uses the Hardy--Littlewood maximal function theorem and the Fefferman--Stein sharp function theorem.
A key step of the proof is to estimate the H\"older semi-norm of the derivatives of solutions to the corresponding homogeneous equations.
See \cite{MR2338417, MR2300337, Krylov, Dong-Kim11, MR3812104} and the references therein for subsequent work in this direction. Particularly,  in \cite{MR3812104} a generalized Fefferman--Stein theorem was established in weighted mixed-norm Lebesgue spaces.
The underlying space is a space of homogeneous type, which is equipped with a quasi-metric and a doubling measure.

To prove the main theorems, we construct a quasi-metric as well as a filtration of partitions (dyadic decompositions) on $\Omega_\infty$, which are suitable to \eqref{eq6.12}.
In particular, after using a proper scaling argument, they allow us to apply the interior H\"older estimates for nondegenerate equations proved in \cite{Dong-Kim11}.
The boundary H\"older estimates are more involved especially when $\alpha\in (1,2)$.
To this end, we use an energy method, the weighted Sobolev embedding, and a delicate bootstrap argument.
We consider the quantities $D_{x'} u$ and $U=a_{dj}D_ju$ instead of the full gradient $Du$.
See the proof of Proposition \ref{prop3}.
Such boundary estimates seem to be new even when the coefficients are constant. 
It is worth noting that for scalar equations with negative $\alpha$, boundary Schauder type estimates were established recently \cite{JinXiong}, which were essential in the derivation of optimal boundary regularity for fast diffusion equations.
Since we do not use the maximum principle or the DeGiorgi-Nash-Moser estimate, our results can be readily extended to the corresponding systems.

\subsection*{Organization of the paper}
The paper is organized as follows.
In Section \ref{sec-2}, we introduce the needed functional spaces, give the definition of weak solutions to \eqref{eq6.12}, and state the main results.
Preliminary analysis and $L_2$-solutions are discussed in Section \ref{sec-3}.
In Section \ref{sec-4}, we study the case when the coefficients of \eqref{eq6.12} depend only on the $x_d$-variable.
Finally, the proofs of the main results (Theorems \ref{main-thrm} and \ref{thrm-2}) are given in Section \ref{sec-5}.

\subsection*{Acknowledgement}
We would like to thank Andreas Seeger for some useful discussions.

%%%%%%%%%%%%%%%%%%%%%%%%%%%%%%%%%

\section{Weak solutions and main results} \label{sec-2}

\subsection{Functional spaces and definition of weak solutions}
For $p \in [1, \infty)$,  $-\infty\le S<T\le +\infty$, and for a given domain $\cD \subset \bR^d_+$, let $L_p((S,T)\times \cD)$ be  the usual Lebesgue space consisting of measurable functions $u$ on $(S,T)\times \cD$ such that the norm
\[
\|u\|_{L_p( (S,T)\times \cD)}= \left( \int_{(S,T)\times \cD} |u(t,x)|^p\, dxdt \right)^{1/p} <\infty.
\]
Also, for a given weight $\omega$ on $(S,T)\times \cD$, we define $L_{p}((S,T)\times \cD,\omega)$ to be the weighted Lebesgue space on $(S,T)\times \cD$ equipped with the norm
\begin{equation*}
\|u\|_{L_{p}((S,T)\times \cD, \omega)}=\left(\int_{S}^T\int_{\cD} |u(t,x)|^p \omega (t,x)\, dx dt\right)^{1/p}   < \infty.
\end{equation*}

Because of  the structure of \eqref{eq6.12}, the following weighted Sobolev spaces are needed.
For a fixed $\alpha \in (0,2)$ and a given weight $\widetilde \omega$ on $\cD$, we define
$$
W^1_p(\cD, \widetilde \omega)=\big\{u\,:\, ux_d^{-\alpha/2}, Du\in L_p(\cD, \widetilde \omega)\big\},
$$
which is equipped with the norm
$$
\|u\|_{W^1_p(\cD,\widetilde\omega)}=\|u x_d^{-\alpha/2}\|_{L_p(\cD, \widetilde \omega)}+\|Du\|_{L_p(\cD,\widetilde \omega)}.
$$
We note that $W^1_p(\cD, \widetilde \omega)$ depends on $\alpha$, and it is different from the usual weighted Sobolev space.

We denote by $\sW^1_p(\cD,\widetilde \omega)$ the closure in $W^1_p(\cD,\widetilde \omega)$ of all compactly supported functions in $C^\infty(\overline{\cD})$ vanishing near $\overline{\cD} \cap \{x_d=0\}$ if $\overline{\cD} \cap \{x_d=0\}$ is not empty.
The space $\sW^1_p(\cD, \widetilde \omega)$ is equipped with the same norm
$$
\|u\|_{\sW^1_p(\cD,\widetilde \omega)}=\|u\|_{W^1_p(\cD,\widetilde\omega)}.
$$
We define $W^1_p((S,T) \times \cD,\omega)$ and $\sW^1_p((S,T) \times \cD,\omega)$ in a similar way, and for $u \in  \sW^1_p((S,T) \times \cD,\omega)$,
\[
\begin{split}
\|u\|_{\sW^1_p((S,T) \times\cD, \omega)} & =\|u\|_{W^1_p((S,T) \times\cD,\omega)}\\
& =\|u x_d^{-\alpha/2}\|_{L_p((S,T) \times\cD, \omega)}+\|Du\|_{L_p((S,T) \times\cD, \omega)}.
\end{split}
\]

%Note that if $\overline{\cD} \cap \{x_d=0\} \not=\emptyset$, due to Hardy's inequality, we can skip the term $\|u\|_{L_p(\cD, x_d^{-p/2})}$ in $\|u\|_{\sW^1_p(\cD)}$.
%Regarding the relation between the functional spaces $W^1_p(\cD)$ and $\sW^1_p(\cD)$, we have the following trace theorem.
%Though it is not used in the paper, it is important to point out.
%The proof of the theorem is given in Appendix \ref{sec-proof-trace}.
%
%
%\begin{theorem} \label{zero-trace-1}
%Let $B_1'$ be the unit ball in $\bR^{d-1}$ centered at the origin, $\cD= B_1' \times (0, 1)$, and $p \in [2, \infty)$.
%If $u \in W^1_p(\cD)$, then $u(x', 0) =0$ in the trace sense  for a.e. $x' \in B_1'$.
% Moreover,
% \[
% W^1_p(\cD) = \sW^{1}_p (\cD).
% \]
%\end{theorem}

\noindent Next, we define
\[
\begin{split}
& \bH_{p}^{-1}( (S,T)\times \cD, \omega) \\
& =\big\{u\,:\, u  =  \mu(x_d) D_iF_i +f_1+f_2, \ \ \text{where}\ f_1 x_d^{1-\alpha},f_2x_d^{-\alpha/2}\in L_{p}( (S,T)\times \cD, \omega)\\
& \qquad\text{and }
 F= (F_1,\ldots,F_d) \in L_{p}((S,T)\times \cD, \omega)^{d}\big\},
\end{split}
\]
that is equipped with the norm
\begin{align*}
&\|u\|_{\bH_{p}^{-1}((S,T)\times \cD, \omega)} \\
&=\inf\big\{\|F\|_{L_{p}((S,T)\times \cD, \omega)}
+\||f_1x_d^{1-\alpha}|+|f_2x_d^{-\alpha/2}|\|_{L_{p}((S,T)\times \cD, \omega)}\,:\\
&\qquad u= \mu(x_d) D_iF_i +f_1+f_2\big\}.
\end{align*}
Then, we define the solution space
\[
 \sH_{p}^1((S,T)\times \cD, \omega)
 =\big\{u \,:\,  u \in  \sW^1_p((S,T) \times \cD,\omega)),
 u_t\in  \bH_{p}^{-1}( (S,T)\times \cD, \omega)\big\},
\]
which is equipped with the norm
\begin{align*}
\|u\|_{\sH_{p}^1((S,T)\times \cD, \omega)} &= \|ux_d^{-\alpha/2}\|_{L_{p}((S,T)\times \cD, \omega)} + \|Du\|_{L_{p}((S,T)\times \cD, \omega)} \\
& \qquad +\|u_t\|_{\bH_{p}^{-1}((S,T)\times \cD, \omega)}.
\end{align*}
 If $\omega \equiv 1$, we simply write $\sW^1_p((S,T) \times \cD,\omega)),  \sH_{p}^1((S,T)\times \cD, \omega)$ as $\sW^1_p((S,T) \times \cD)),  \sH_{p}^1((S,T)\times \cD)$, respectively. Now, we give the definition of weak solutions  to equation \eqref{eq6.12}.
\begin{definition}  \label{weak-sol-def}
Let $p \in (1, \infty)$, $F \in L_p((S,T)\times \cD, \omega)^d$ and $f=f_1+f_2$, where $f_1x_d^{1-\alpha},f_2x_d^{-\alpha/2} \in L_p((S,T)\times \cD,  \omega)$.
We say that $u \in \sH_{p}^1((S,T)\times \cD, \omega)$ is a weak solution to \eqref{eq6.12} in $(S,T)\times \cD$ with the boundary condition $u =0$ on $\overline{\cD} \cap \{x_d =0\}$ when $\overline{\cD} \cap \{x_d =0\} \not=\emptyset$ if
\begin{align*}
&\int_{(S,T)\times \cD} \mu(x_d)^{-1}(-u \partial_t \varphi +\lambda c_0(z)u \varphi)\, dz + \int_{(S,T)\times \cD}( a_{ij} D_{j} u  + F_i)D_{i} \varphi\, dz \\
&=   \int_{(S,T)\times \cD}
\mu(x_d)^{-1} f(z) \varphi(z)\, dz
\end{align*}
for any $\varphi \in C_0^\infty((S,T)\times \cD)$.
\end{definition}

\subsection{Balls, cylinders, and partial mean oscillations of coefficients}
For $x_0 = (x', x_{0d}) \in \bR^{d-1} \times \bR_+$ and $\rho>0$, we write $B_\rho(x_0)$ the ball in $\bR^d$ with radius $\rho$ and centered at $x_0$.
Also
\[
B_\rho^+(x_0) = B_\rho(x_0) \cap \bR^d_+,
\]
and $B_\rho'(x_0')$ is the ball in $\bR^{d-1}$ with radius $\rho$ and centered at $x_0' \in \bR^{d-1}$.

Recall that the PDE in \eqref{eq6.12} is invariant under the scaling
\[
(t,x) \mapsto (s^{2-\alpha} t, sx), \quad s > 0.
\]
Moreover, for $x_d \sim x_{0d} \gg 1$ and $a_{ij} = \delta_{ij}, c_0 =1, F=0, \lambda =0$, the PDE in  \eqref{eq6.12} is approximated by a nonhomogeneous heat equation
\[
u_t  -x_{0d}^{\alpha} \Delta u = f,
\]
which can be reduced to the heat equation with unit heat constant under the scaling
\[
(t,x) \mapsto (s^{2-\alpha} t, s^{1-\alpha/2} x_{0d}^{-\alpha/2}x), \quad s>0.
\]
Due to these facts, throughout the paper, the following notation on parabolic cylinders in $\Omega_T$ are used.
For each $z_0 = (t_0, x_0) \in (-\infty, T) \times \bR^d_+$ with $x_0= (x_0', x_{0d}) \in \bR^{d-1} \times \bR_+$ and $\rho>0$, we write
\begin{equation} \label{Q.def}
\begin{split}
& Q_{\rho}(z_0) =  (t_0 - \rho^{2-\alpha}, t_0) \times B_{r(\rho, x_{0d})} (x_0), \quad \\
%& Q_{\rho}'(z_0') =  (t_0 - \rho^{2} \max\{\rho, x_{0d}\}^{-\alpha} , t_0) \times B_\rho (x_0')\\
&Q_{\rho}^+(z_0) = Q_\rho(z_0) \cap \{x_d>0\},
\end{split}
\end{equation}
where
\begin{equation} \label{r.def}
r(\rho,x_{0d}) = \max\{\rho, x_{0d}\}^{\alpha/2} \rho^{1-\alpha/2}.
\end{equation}
Note that $Q_{\rho}(z_0) = Q_{\rho}^+(z_0) \subset (-\infty, T) \times \bR^d_+$ for $\rho \in (0,x_{0d})$.
%For a locally integrable function $g$ on $\Omega_T$ and for $\rho>0, z' =(t,x') \in (-\infty, T)\times \bR^{d-1}$, we write
%\begin{equation} \label{average-def}
%[g]_{\rho, z'}(x_d) =  \fint_{t- \rho^{2}}^t \fint_{B_{r(\rho, x_d)}'(x')} g(\tau,y', x_d)\, dy' d\tau \end{equation}
%where $x_d \in \bR_+$.

 We impose the following assumption  on the partial mean oscillations of the coefficients $(a_{ij})$ and $c_0$, which is an adaptation of the same concept introduced in \cite{MR2338417, MR2300337}.

%=====
\begin{assumption}[$\rho_0, \delta$] \label{mean-osc}
For every $\rho \in (0, \rho_0)$ and for each $z= (z', x_{d}) \in \overline{\Omega}_T$, there exist $[a_{ij}]_{\rho, z'},  [c_{0 }]_{\rho, z'}: ((x_{d} -r(\rho, x_d))_+, x_d + r(\rho, x_d)) \rightarrow \bR$ such that \eqref{ellipticity-cond} and \eqref{c-mu.cond} hold on $((x_{d} -r(\rho, x_d))_+, x_d + r(\rho, x_d))$  with  $[a_{ij}]_{\rho, z'}$ in place of $(a_{ij})$ and $[c_{0 }]_{\rho, z'}$ in place of $c_0$.
Moreover,
%the partial mean oscillation of $(a_{ij})$ and $c_0$ in $Q_\rho(z)$ is defiend by
\[
\begin{split}
& \max_{i, j =1,2,\ldots, d}\fint_{Q_\rho^+(z)} | a_{ij}(\tau, y', y_d) -[a_{ij}]_{\rho,z'}(y_d)|\, dy' dy_d d\tau \\
& \qquad + \fint_{Q_\rho^+(z)} | c_{0}(\tau, y', y_d) -[c_{0}]_{\rho,z'}(y_d)|\, dy'dy_d d\tau < \delta.
\end{split}
\]
%where $[a_{ij}]_{\rho, z'}(\cdot)$ and $[c_0]_{\rho, z'}(\cdot)$ are defined as in \eqref{average-def}.
\end{assumption}

\subsection{Main results} We now state the main results of the paper.

\begin{theorem} \label{main-thrm}
For given $\nu \in (0,1), \alpha \in (0,2)$ and $p \in (1, \infty)$, there are a sufficiently large number $\lambda_0 = \lambda_0(d, \nu, \alpha, p)>0$ and a sufficiently small number $\delta = \delta(d, \nu, \alpha, p) >0$ such that the following assertions hold.
Assume   \eqref{ellipticity-cond}, \eqref{c-mu.cond}, and \textup{Assumption} \ref{mean-osc} \textup{($\rho_0, \delta$)} are satisfied
with some $\rho_0>0$.
Then for any $F \in L_p(\Omega_T)^d$,  $\lambda \geq \lambda_0 \rho_0^{\alpha-2}$, and $f = f_1 + f_2$ such that $x_d^{1-\alpha} f_1$ and $x_d^{-\alpha/2} f_2\in L_p(\Omega_T)$, there exists a unique weak solution $u \in \sH^{1}_{p}(\Omega_T)$ of \eqref{eq6.12}. Moreover,
\begin{equation} \label{main-est-0508}
\begin{split}
& \|Du\|_{L_p(\Omega_T)} +   \sqrt{\lambda} \| x_d^{-\alpha /2} u\|_{L_p(\Omega_T)}   \leq  N \big(\|F\|_{L_p(\Omega_T)}  +   \|g\|_{L_p(\Omega_T)} \big),
 \end{split}
\end{equation}
where $N = N(\nu, d, \alpha, p)>0$ and $g(z) = x_d^{1-\alpha}|f_1(z)|+ \lambda^{-1/2} x_d^{-\alpha/2} |f_2(z)|$ for $z = (z', x_d) \in \Omega_T$.
\end{theorem}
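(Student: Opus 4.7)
The plan is to prove the a priori estimate \eqref{main-est-0508} and then combine it with the $L_2$-theory of Section \ref{sec-3} via the method of continuity to obtain existence and uniqueness in $\sH^1_p$. For the a priori bound I would follow Krylov's kernel-free mean-oscillation scheme. The cylinders $Q_\rho(z_0)$ from \eqref{Q.def}--\eqref{r.def} are tailored so that $(\Omega_\infty, d_*, dz)$ is a space of homogeneous type with a natural dyadic filtration, simultaneously encoding the intrinsic scaling $(t,x)\mapsto (s^{2-\alpha}t,sx)$ and the ``heat-scale at height $x_{0d}$'' anisotropy. The core goal is a pointwise mean-oscillation bound of the form
\[
\bigl(|Du - (Du)_{Q_\rho(z_0)}|\bigr)_{Q_\rho(z_0)} \leq \varepsilon\, \cM\bigl(|Du|^p+\lambda|x_d^{-\alpha/2}u|^p\bigr)^{1/p} + C_\varepsilon\, \cM\bigl(|F|^p+|g|^p\bigr)^{1/p},
\]
together with an analogous bound for $x_d^{-\alpha/2} u$. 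Once these are established, the Hardy--Littlewood maximal theorem and the Fefferman--Stein sharp function theorem on this quasi-metric space close the estimate, absorbing the $\varepsilon$-term for $\lambda$ sufficiently large.

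To produce the oscillation bound, for each cylinder I would split $u = w + v$, where $w$ solves the homogeneous equation with coefficients frozen in $(t,x')$ to the one-variable objects $[a_{ij}]_{\rho,z'}(x_d)$ and $[c_0]_{\rho,z'}(x_d)$ from Assumption \ref{mean-osc}, while $v$ absorbs both the inhomogeneities and the VMO error. The $v$-piece is controlled in $L_p$ by the wellposedness for coefficients depending only on $x_d$ (Section \ref{sec-4}), multiplied by the small parameter $\delta$ to defeat the VMO defect. The $w$-piece needs a Hölder estimate on the relevant derivatives, and I would treat two regimes: when $\rho < x_{0d}/2$, the rescaling $(t,x)\mapsto (x_{0d}^\alpha t, x_{0d} x)$ converts the equation into a uniformly parabolic nondegenerate one on a unit cylinder bounded away from $\{x_d=0\}$, so the interior Hölder estimates of \cite{Dong-Kim11} apply; when $\rho \geq x_{0d}/2$, which is the genuine boundary case, I would invoke Proposition \ref{prop3}, whose proof rests on an energy/Caccioppoli estimate tailored to the weight $\mu(x_d)$, the weighted Sobolev embedding adapted to $\sW^1_p$, and a bootstrap raising integrability.

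The principal obstacle I would expect is the boundary Hölder estimate when $\alpha\in[1,2)$: then $\mu(x_d)^{-1}$ fails to be locally integrable near $\{x_d=0\}$, the singular reformulation \eqref{sing-PDE} cannot be tested against arbitrary cutoffs, and $D_d u$ need not remain bounded up to the boundary. The fix, as indicated in the introduction, is to avoid $D_d u$ altogether: one tracks only the tangential gradient $D_{x'}u$, which satisfies an equation of the same form obtained by differentiating in $x'$, and the conormal flux $U = a_{dj}D_j u$, which is the natural divergence-form quantity and remains well-behaved across $\{x_d=0\}$ for solutions vanishing on the boundary. A Hölder bound on $D_{x'}u$ and on $U$ then yields one on $D_d u$ via the pointwise identity $U = a_{dd}D_d u + \sum_{j<d} a_{dj}D_j u$ and uniform ellipticity. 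Plugging the resulting Hölder estimate back into the oscillation bound and applying Fefferman--Stein delivers \eqref{main-est-0508}; the threshold $\lambda \geq \lambda_0 \rho_0^{\alpha-2}$ is exactly what emerges upon rescaling to the normalized case $\rho_0 = 1$ built into Assumption \ref{mean-osc}.
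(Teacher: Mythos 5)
Your overall strategy matches the paper's: mean-oscillation estimates on the quasi-metric space built in Section~\ref{filtration.def}, a decomposition into a homogeneous frozen-coefficient piece and an $L_p$-controlled remainder, interior H\"older estimates after rescaling (via \cite{Dong-Kim11}) and boundary H\"older estimates from Proposition~\ref{prop3}, then Fefferman--Stein and Hardy--Littlewood to close.

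However, there is a genuine gap in the last step of your boundary argument. You first correctly state that one should ``avoid $D_d u$ altogether'' and work with $D_{x'}u$ and the conormal flux $U=\overline{a}_{dj}D_j u$, but then you claim that a H\"older bound on $D_{x'}u$ and on $U$ yields one on $D_d u$ via $D_d u = \overline{a}_{dd}^{-1}\bigl(U-\sum_{j<d}\overline{a}_{dj}D_j u\bigr)$. This is false in the setting of the paper: the coefficients $\overline{a}_{ij}(x_d)$ in the frozen equation are only \emph{measurable} in $x_d$, so multiplying a H\"older function by the merely bounded $\overline{a}_{dd}^{-1}$ destroys H\"older continuity and leaves only an $L_\infty$ bound on $D_d u$. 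Consequently, you cannot then ``plug the resulting H\"older estimate back into the oscillation bound'' for $Du$ and apply Fefferman--Stein to $Du$, as you propose in your displayed target inequality. The paper circumvents this entirely (Lemma~\ref{hom-osc-est}, Proposition~\ref{osc-prop}): the mean-oscillation estimate and the sharp-function argument are carried out for $\cU=(D_{x'}u,U)$ and $v=\sqrt\lambda x_d^{-\alpha/2}u$ directly, and the bound on $\|Du\|_{L_p}$ is recovered at the very end only through the \emph{pointwise magnitude} comparability $|\cU|\sim|Du|$ furnished by uniform ellipticity, which requires no regularity of $\overline{a}_{dd}$ whatsoever.

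Two further, smaller omissions: you do not address how the oscillation estimate, which is only available for small radii $\rho<\rho_0/14$ under Assumption~\ref{mean-osc}, is upgraded to a global $L_p$ bound for general $u$. The paper does this by first proving the estimate for solutions compactly supported in a short time interval (Lemma~\ref{small-spt-lemma}, Corollary~\ref{small-support-lemma}), then removing that restriction by a partition of unity in $t$ (Lemma~\ref{apri-est.lemma}), which is exactly where the requirement $\lambda\ge\lambda_0\rho_0^{\alpha-2}$ arises --- not, as you suggest, merely from a rescaling normalization. You also do not mention that the case $p\in(1,2)$ is handled by a duality argument against the adjoint equation; Fefferman--Stein as you set it up only gives $p>2$ directly. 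Finally, for existence, the method of continuity should be anchored at Theorem~\ref{sim-eqn-thrm} (solvability in $\sH^1_p$ for $x_d$-only coefficients), not just the $L_2$-theory of Section~\ref{sec-3}, since one needs an endpoint with $\sH^1_p$-solvability.
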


Our second result is about the estimate and solvability in weighted Sobolev spaces.
For  $p \in (1, \infty)$, we write $w \in A_p (\bR^{d+1}_+)$ if $w$ is a weight on $\bR^{d+1}_+$
such that
$$
    [w]_{A_p (\bR^{d+1}_+)} : = \sup_{ z_0 \in \overline{\bR^{d+1}_+}, \rho > 0}
    \Big(\fint_{    Q^+_\rho (z_0) } w (z) \, dz\Big)\Big(\fint_{ Q^+_\rho (z_0) } w^{-1/(p-1)}(z) \, dz\Big)^{p-1} < \infty.
$$
\begin{theorem}
                \label{thrm-2}
Let $\nu \in (0,1), \alpha \in (0,2)$, $p\in (1, \infty)$ be fixed, and $M \geq 1$.
Assume that $w\in A_p(\bR^{d+1}_+)$ with $[w]_{A_p(\bR^{d+1}_+)}\le M$.
There are a sufficiently large number $\lambda_0 = \lambda_0(d, \nu, \alpha, p,M)>0$ and a sufficiently small number $\delta = \delta(d, \nu, \alpha, p,M) >0$ such that the following assertions hold.
Assume  \eqref{ellipticity-cond}, \eqref{c-mu.cond}, and \textup{Assumption} \ref{mean-osc} \textup{($\rho_0, \delta$)} are satisfied with some $\rho_0>0$.
Then for any $F \in L_p(\Omega_T, w)^d$,  $\lambda \geq \lambda_0 \rho_0^{\alpha-2}$, and $f = f_1 + f_2$ such that $x_d^{1-\alpha} f_1$ and $x_d^{-\alpha/2} f_2\in L_p(\Omega_T,  w)$, there exists a unique weak solution $u \in \sH^{1}_{p}(\Omega_T,w)$ of \eqref{eq6.12}.
Moreover,
\begin{equation} \label{main-est-0508b}
\begin{split}
& \|Du\|_{L_p(\Omega_T,w)} +   \sqrt{\lambda} \| x_d^{-\alpha /2} u\|_{L_p(\Omega_T,w)}   \leq  N \big(\|F\|_{L_p(\Omega_T,w)}  +   \|g\|_{L_p(\Omega_T,w)} \big),
 \end{split}
\end{equation}
where $N = N(\nu, d, \alpha, p,M)>0$ and $g(z) = x_d^{1-\alpha}|f_1(z)|+ \lambda^{-1/2} x_d^{-\alpha/2} |f_2(z)|$ for $z = (z', x_d) \in \Omega_T$.
\end{theorem}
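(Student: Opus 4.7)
The plan is to deduce Theorem \ref{thrm-2} from the same pointwise mean/sharp function estimates that underlie Theorem \ref{main-thrm}, and then invoke the weighted Fefferman--Stein and Hardy--Littlewood theorems on spaces of homogeneous type from \cite{MR3812104}. Concretely, the proof of Theorem \ref{main-thrm} (via Krylov's kernel-free scheme) produces an estimate of the form
\begin{equation*}
\bigl(|Du|\bigr)^{\#}_{\rho}(z_{0})
\;+\;\sqrt{\lambda}\,\bigl(|x_{d}^{-\alpha/2}u|\bigr)^{\#}_{\rho}(z_{0})
\;\le\; N\,\kappa\,\cM\bigl(|F|+|g|\bigr)(z_{0})\;+\;N\kappa^{-\beta}\cM\bigl(|Du|+\sqrt{\lambda}\,|x_{d}^{-\alpha/2}u|\bigr)(z_{0})
\end{equation*}
at each $z_0\in\overline{\bR^{d+1}_{+}}$ and each scale $\rho>0$, where $\cM$ and $(\cdot)^{\#}$ are the maximal and sharp operators associated with the family $\{Q_{\rho}^{+}(z_{0})\}$, and $\kappa\ge 1$ is a free parameter. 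This is a \emph{pointwise} inequality: it has no dependence on $p$ or $w$, so it is equally available for the weighted estimate.

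\textbf{Step 1: homogeneous-type structure.} First I would check that, with the natural quasi-metric $d_{p}$ on $\overline{\bR^{d+1}_{+}}$ whose balls are the $Q^{+}_{\rho}(z_{0})$ defined in \eqref{Q.def}--\eqref{r.def}, the Lebesgue measure $dz$ is doubling. The volume is $|Q^{+}_{\rho}(z_{0})| \sim \rho^{2-\alpha}\,r(\rho,x_{0d})^{d-1}\,\min\{\rho,x_{0d}\}\,\cdot\,(\text{correction for }r(\rho,x_{0d}))$, which is comparable to $\rho^{2-\alpha}\max\{\rho,x_{0d}\}^{\alpha(d-1)/2}\rho^{(1-\alpha/2)(d-1)}\cdot\min\{x_{0d}+\rho,\, r(\rho,x_{0d})\}$; by checking the two regimes $\rho\le x_{0d}$ and $\rho\ge x_{0d}$ separately, one verifies doubling and the quasi-triangle inequality, so $(\overline{\bR^{d+1}_{+}},d_{p},dz)$ is a space of homogeneous type. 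A dyadic filtration of partitions compatible with this quasi-metric can be produced by the Christ construction. This setup meets the hypotheses required in \cite{MR3812104} for the generalized Fefferman--Stein theorem and the weighted boundedness of $\cM$.

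\textbf{Step 2: weighted bootstrap.} Using $[w]_{A_{p}(\bR^{d+1}_{+})}\le M$, the weighted Fefferman--Stein inequality and weighted maximal function bound from \cite{MR3812104} give, for any $h\in L_{p}(\Omega_{T},w)$,
\begin{equation*}
\|h\|_{L_{p}(\Omega_{T},w)} \le N(d,p,M)\,\|h^{\#}\|_{L_{p}(\Omega_{T},w)},
\qquad
\|\cM h\|_{L_{p}(\Omega_{T},w)} \le N(d,p,M)\,\|h\|_{L_{p}(\Omega_{T},w)}.
\end{equation*}
Applying these to $h = |Du|+\sqrt{\lambda}\,|x_{d}^{-\alpha/2}u|$ and plugging in the pointwise inequality from Step~0 yields
\begin{equation*}
\|Du\|_{L_{p}(\Omega_{T},w)}+\sqrt{\lambda}\,\|x_{d}^{-\alpha/2}u\|_{L_{p}(\Omega_{T},w)}
\le N\kappa\,\|\,|F|+|g|\,\|_{L_{p}(\Omega_{T},w)}
+ N\kappa^{-\beta}\bigl(\|Du\|_{L_{p}(\Omega_{T},w)}+\sqrt{\lambda}\,\|x_{d}^{-\alpha/2}u\|_{L_{p}(\Omega_{T},w)}\bigr).
\end{equation*}
Fixing $\kappa$ large so that $N\kappa^{-\beta}\le 1/2$ and absorbing, one obtains \eqref{main-est-0508b} provided a priori $u\in\sH^{1}_{p}(\Omega_{T},w)$.

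\textbf{Step 3: solvability.} Existence and uniqueness in $\sH^{1}_{p}(\Omega_{T},w)$ follow by the method of continuity once the a priori estimate is proved. I would connect $\sL$ to a reference operator (e.g., with $a_{ij}(z)=\delta_{ij}$ and $c_{0}\equiv 1$), using the fact that the a priori estimate \eqref{main-est-0508b} holds uniformly along the homotopy. For the endpoint of the homotopy, the $L_{p}(w)$ solvability of the reference equation can be obtained either by Muckenhoupt extrapolation from the unweighted case in Theorem \ref{main-thrm} or by direct application of the same sharp-function machinery with constant coefficients. Uniqueness follows from \eqref{main-est-0508b} applied to the difference of two solutions.

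\textbf{Main obstacle.} The technically hardest ingredient, inherited from the proof of Theorem \ref{main-thrm}, is the pointwise sharp-function estimate on the boundary portion of $Q^{+}_{\rho}(z_{0})$ when $\alpha\in(1,2)$: here $\mu(x_{d})^{-1}$ is non-integrable at $\{x_{d}=0\}$, and one must control $D_{x'}u$ and $U=a_{dj}D_{j}u$ separately (as in the proposed Proposition \ref{prop3}) via an energy argument, weighted Sobolev embeddings, and a bootstrap, rather than relying on DeGiorgi--Nash--Moser type estimates. Once that pointwise estimate is in hand, the passage to the weighted setting is essentially a matter of packaging: verifying the homogeneous-type axioms for $(Q^{+}_{\rho}, dz)$ and feeding the estimate into the weighted Fefferman--Stein theorem of \cite{MR3812104}.
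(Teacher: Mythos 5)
Your overall strategy---verify the homogeneous-type structure, feed a sharp-function estimate into the weighted Fefferman--Stein and Hardy--Littlewood theorems of \cite{MR3812104}, and then run the method of continuity---is the correct skeleton and matches the paper's. However, there is a real gap in Step~0/Step~2 that the paper has to work to fill, and which your proposal glosses over.

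The pointwise inequality you write in ``Step~0'' is not the one the proof of Theorem \ref{main-thrm} actually produces. The mean-oscillation estimate of Lemma \ref{oss-lem-vmo} controls the sharp function by \emph{higher-exponent} averages: the solution terms appear as $\big(|Du|^q\big)_{Q_{14\rho}^+}^{1/q}$ and $\big(|v|^q\big)_{Q_{14\rho}^+}^{1/q}$ with some $q>2$ (needed because of the $\delta^{1/2-1/q}$ factor coming from the VMO coefficients), and the forcing terms as $\big(|F|^2\big)^{1/2}$, $\big(|g|^2\big)^{1/2}$. After taking suprema this gives the operators $\cM_q h := \cM(|h|^q)^{1/q}$ with $q>2$ and $\cM_2$, not the plain $\cM$ you wrote. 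This matters: $\cM_q$ is bounded on $L_p(w)$ precisely when $w\in A_{p/q}$, and $A_{p/q}\subsetneq A_p$ for $q>1$, so boundedness does \emph{not} follow from the hypothesis $w\in A_p$. In the unweighted case this is harmless because $L_p\mapsto L_p$ boundedness of $\cM_2$ only needs $p>2$ (and the $p<2$ case is handled by duality), but in the weighted case the absorption argument in your Step~2 simply fails for a general $w\in A_p$.

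What the paper does to repair this is exactly the content of Lemma \ref{lem5.2}: it re-proves the key boundary H\"older estimate (Proposition \ref{prop3}) with an $L_{p_0}$ norm, $p_0\in(1,2)$, on the right-hand side, and Theorem \ref{sim-eqn-thrm} already provides the $W^1_{p_0}$ solvability. Combining these yields a new mean-oscillation inequality with small exponents, $\big(|Du|^{p_1}\big)^{1/p_1}$ and $\big(|F|^{p_0}\big)^{1/p_0}$ with $1<p_0<p_1<2$. These are then compatible with $w\in A_p$ for every $p\in(1,\infty)$ after invoking the self-improving (reverse H\"older) property of $A_p$ weights; this is also where the dependence of $\delta$, $\lambda_0$, and $N$ on $M=[w]_{A_p}$ enters. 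Your proposal never identifies the need to lower the averaging exponents, which is the actual technical content of the passage from Theorem \ref{main-thrm} to Theorem \ref{thrm-2}. (A minor remark: the signs of the powers of $\kappa$ in your Step~0 are reversed relative to the paper's Lemma \ref{oss-lem-vmo}, where $\kappa\in(0,1)$ is taken small and $\kappa^{\gamma_0}$ multiplies the solution term while $\kappa^{-\gamma_1}$ multiplies the forcing term; as written, your absorption step at the end of Step~2 does not close.)

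Your Steps~1 and~3 are fine in spirit and match the paper: the homogeneous-type structure is established in Section \ref{filtration.def}, and solvability is obtained from the unweighted result Theorem \ref{main-thrm} following \cite[Sec. 8]{MR3812104} (essentially your method-of-continuity idea).
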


\begin{remark}
We can similarly define $A_p(\bR^{d}_+)$ and $A_p(\bR^d)$ with half balls $B_\rho^+(x_0)$ and balls $B_\rho(x_0)$ in place of $Q^+_\rho (z_0)$,  respectively.
It is easily seen that if $w_1\in A_p (\bR)$ and $w_2\in A_p(\bR^d_+)$, then $w=w(t,x):=w_1(t)w_2(x)\in A_p(\bR^{d+1}_+)$ and
$$
[w]_{A_p (\bR^{d+1}_+)}\le [w_1]_{A_p (\bR)}[w_2]_{A_p (\bR^{d}_+)}.
$$
Consequently, by using the Rubio de Francia extrapolation theorem (see, for instance, \cite{MR745140} or \cite[Theorem 2.5]{MR3812104}), from Theorem \ref{thrm-2}, we also derive the corresponding weighted mixed-norm estimate and solvability.
We also mention that a typical example of such $A_p$ weight $w_2$ is given by $x_d^\gamma$ for any $\gamma\in (-1,p-1)$.
\end{remark}

\begin{remark}
Theorems \ref{main-thrm} and \ref{thrm-2} can be extended to equations with lower-order terms in the form
$$
u_t+\lambda c_0(z) u - \mu(x_d)D_i\big( a_{ij} (z) D_j u)+b_iD_i u+cu=f+\mu(x_d)D_iF_i \quad \text{in} \quad \Omega_T,
$$
where $b$ and $c$ are bounded and measurable, and $b\equiv 0$ when $\alpha\in [1,2)$.
To see this, we write the equation into
$$
u_t+ \lambda c_0(z) u - \mu(x_d) D_i\big( a_{ij} (z) D_j u)=\tilde f+\mu(x_d) D_iF_i \quad \text{in} \quad \Omega_T,
$$
where
$$
\tilde f=\tilde f_1+\tilde f_2,\quad \tilde f_1=f_1-b_iD_i u 1_{x_d<\tau},\quad\tilde f_2=f_2-b_iD_i u 1_{x_d\ge \tau}-cu.
$$
By the theorems above, we have
\begin{align*}
&\|Du\| +   \sqrt{\lambda} \| x_d^{-\alpha /2} u\|  \\
&\leq  N \big(\|F\|  +   \|g\|+\|x_d^{1-\alpha} b_iD_i u1_{x_d<\tau}\|
+\lambda^{-1/2}
\|x_d^{-\alpha/2} (b_iD_i u1_{x_d\ge\tau}+cu)\|\big)\\
&\leq  N \big(\|F\|  +   \|g\|\big)+N(\tau^{1-\alpha}+\lambda^{-1/2}\tau^{-\alpha/2})\|bD u\|+N\lambda^{-1/2}\|x_d^{-\alpha/2}u\|,
\end{align*}
where $\|\,\cdot\,\|$ is either the $L_p$ norm or the weighted $L_p$ norm and $N$ is independent of $\tau$.
By taking $\tau$ sufficiently small and then $\lambda$ sufficiently large, we can absorb the second and last terms on the right-hand side to the left-hand side.
The solvability then follows from the method of continuity.
Finally, we can also deduce the corresponding results for elliptic equations of the form
$$
- D_i\big( a_{ij} (z) D_j u)+ \mu(x_d)^{-1}(b_iD_iu +cu+\lambda c_0(z) u)=\mu(x_d)^{-1} f+D_iF_i \quad \text{in} \quad \bR^d_+
$$
with the Dirichlet boundary condition $u=0$ on $\{x_d=0\}$,
by viewing solutions to the elliptic equations as steady state solutions to the corresponding parabolic equations.
We refer the reader to the proof of \cite[Theorem 2.6]{Krylov-07}.
It is worth noting that here the lower-order coefficients $\mu(x_d)^{-1}b$ and $\mu(x_d)^{-1} c$ do not even belong to $L_d$ and $L_{d/2}$, respectively, when $\alpha \in [2/d, 2)$, which are usually required in the classical $L_p$ theory.
See, for instance, \cite{kr21} and the references therein.
\end{remark}
\begin{remark} We note that $W^1_p(\bR^d_+) = \sW^1_p(\bR^d_+)$ if $p \geq 2/\alpha$. Moreover, the estimate \eqref{main-est-0508} also implies that
\[
\|x_d^{-1}u\|_{L_p(\Omega_T)} \leq N \big(\|F\|_{L_p(\Omega_T)}  +   \|g\|_{L_p(\Omega_T)} \big)
\]
due to Hardy's inequality.
\end{remark}

%%%%%%%%%%%%%%%%%%%%%%%%%%%%%%%%%%%%%%%%%%%%

\section{Preliminary Analysis and \texorpdfstring{$L_2$}{}-solutions} \label{sec-3}

\subsection{A filtration of partitions and a quasi-metric}  \label{filtration.def}
We construct a filtration of partitions $\{\bC_n\}_{n \in \bZ}$ (dyadic decompositions) of $\bR\times \bR^d_+$, which satisfies the following three basic conditions (see \cite{Krylov}):
\begin{enumerate}[(i)]
\item The elements of partitions are ``large'' for big negative $n$'s and ``small''
for big positive $n$'s: for any $f\in L_{1,\text{loc}}$,
$$
\inf_{C\in \bC_n}|C|\to \infty\quad\text{as}\,\,n\to -\infty,\quad
\lim_{n\to \infty}(f)_{C_n(z)}=f(z)\quad\text{(a.e.)},
$$
where $C_n(z)\in \bC_n$ is such that $z\in C_n(z)$.

\item The partitions are nested: for each $n\in \bZ$, and $C \in \bC_n$, there exists a unique $C' \in \bC_{n-1}$ such that $C \subset C'$.

\item Moreover, the following regularity property holds: For $n,C, C'$ as in (ii), we have
$$
|C'|\le N_0|C|,
$$
where $N_0$ is independent of $n$, $C$, and $C'$.
\end{enumerate}

For any $s\in \bR$, denote $\lfloor s \rfloor$ to be the integer part of $s$, i.e., the largest integer which is less than or equal to $s$. For a fixed $\alpha\in (0,2)$ and $n\in \bZ$, let $k_0=\lfloor -n/(2-\alpha) \rfloor$. We construct $\bC_n$ as follows: it contains boundary cubes in the form
$$
((j-1)2^{-n},j2^{-n}]\times (i_12^{k_0},(i_1+1)2^{k_0}]
\times\cdots\times (i_{d-1}2^{k_0},(i_{d-1}+1)2^{k_0}]\times (0, 2^{k_0}],
$$
where $j,i_1,\ldots,i_{d-1}\in \bZ$, and interior cubes in the form
$$
((j-1)2^{-n},j2^{-n}]\times (i_12^{k_2},(i_1+1)2^{k_2}]
\times\cdots \times (i_d2^{k_2}, (i_d+1)2^{k_2}],
$$
where $j,i_1,\ldots,i_{d}\in \bZ$ and
\begin{equation}
                    \label{eq5.07}
i_d2^{k_2}\in [2^{k_1},2^{k_1+1})\, \text{for some integer}\, k_1\ge k_0,
\quad k_2=\lfloor (-n+k_1\alpha)/2 \rfloor-1.
\end{equation}
Note that $k_2$ is increasing with respect to $k_1$ and decreasing with respect to $n$. Because $k_1\ge k_0>-n/(2-\alpha)-1$, we have
$(-n+k_1\alpha)/2-1\le k_1$,
which implies that $k_2\le k_1$ and $(i_d+1)2^{k_2}\le 2^{k_1+1}$. It is easily seen that all three conditions above are satisfied. Furthermore, according to \eqref{eq5.07} we also have
$$
(2^{k_2}/2^{k_1})^2\sim 2^{-n}/(2^{k_1})^{2-\alpha},
$$
which allows us to apply the interior estimates after a scaling.

Next we define a function $\varrho: \Omega_\infty\times \Omega_\infty\to  [0,\infty)$:
$$
 \varrho((t,x),(s,y))=|t-s|^{1/(2-\alpha)}
+\min\big\{|x-y|,|x-y|^{2/(2-\alpha)}\min\{x_d,y_d\}^{-\alpha/(2-\alpha)}\big\}.
$$
It is easily seen that $\varrho$ is a quasi-metric on $\Omega_\infty$, i.e., there exists a constant $K_1=K_1(d,\alpha)>0$ such that
$$
 \varrho((t,x),(s,y))\le K_1\big(\varrho((t,x),(\hat t,\hat x))+ \varrho((\hat t,\hat x),(s,y))\big)
$$
for any $(t,x),(s,y),(\hat t,\hat x)\in \Omega_\infty$,  and $ \varrho((t,x),(s,y))=0$ if and only if $(t,x)=(s,y)$.
Moreover, the cylinder $Q_\rho^+(z_0)$ defined in \eqref{Q.def} is comparable to
$$
\{(t,x)\in \Omega: t<t_0,\, \varrho((t,x),(t_0,x_0))<\rho \}.
$$
Therefore, $(\Omega_T, \varrho)$ equipped with the Lebesgue measure is a space of homogeneous type and we have a dyadic decomposition, which is given above.

For a locally integrable function $f$ defined on a domain $Q\subset \bR^{d+1}$, we write
\[
(f)_{Q} = \fint_{Q} h(s,y)\, dyds.
\]
We define the dyadic maximal function and sharp function of a locally integrable function $f$ in $\Omega_\infty$ by
\begin{align*}
\cM_{\text{dy}} f(z)&=\sup_{n<\infty}\fint_{C_n(z)\in \bC_n}|f(s,y)|\,dyds,\\
f_{\text{dy}}^{\#}(z)&=\sup_{n<\infty}\fint_{C_n(z)\in \bC_n}|f(s,y)-(f)_{C_n(z)}|\,dyds.
\end{align*}
We also define the maximal function and sharp function over cylinders by
\begin{align*}
\cM f(z)&=\sup_{z\in Q^+_\rho(z_0), z_0\in \overline{\Omega_\infty}}\fint_{Q_\rho^+(z_0)}|f(s,y)|\,dyds,\\
f^{\#}(z)&=\sup_{z\in Q^+_\rho(z_0),z_0\in \overline{\Omega_\infty}}\fint_{Q_\rho^+(z_0)}|f(s,y)-(f)_{Q^+_\rho(z_0)}|\,dyds.
\end{align*}
It is easily seen that for any $z\in \Omega_\infty$, we have
$$
\cM_{\text{dy}} f(z)\le N\cM f(z),\quad f_{\text{dy}}^{\#}(z)\le Nf^{\#}(z),
$$
where $N=N(d,\alpha)$.

%\subsection{Sharp functions and maximal functions}
\subsection{\texorpdfstring{$L_2$}{}-solutions}  We begin with the following lemma on the energy estimate for \eqref{eq6.12}.

\begin{lemma} \label{lemma-ener-1}
Suppose that \eqref{ellipticity-cond} and \eqref{c-mu.cond} are satisfied,  $F \in L_2(\Omega_T)^d$, and $\lambda>0$.
Also let $f = f_1 + f_2$ such that $x_d^{1-\alpha} f_1$ and $x_d^{-\alpha/2} f_2$ are in $L_2(\Omega_T)$.
If $u \in  \sH^{1}_2(\Omega_T)$ is a weak solution of  \eqref{eq6.12}, then
\begin{equation} \label{1016-2c.est}
\begin{split}
& \|Du\|_{L_2(\Omega_T)} + \sqrt{\lambda} \| x_d^{-\alpha/2}u\|_{L_2(\Omega_T)} \leq N \Big[ \|F\|_{L_2(\Omega_T)} + \|g\|_{L_2(\Omega_T)}  \Big],
\end{split}
\end{equation}
where $N = N(\nu, d)$ and $g(z)= x_d^{1-\alpha} |f_1(z)| + \lambda^{-1/2}x_d^{-\alpha/2} |f_2(z)|$ for $z = (z', x_d) \in \Omega_T$.
\end{lemma}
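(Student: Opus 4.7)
The plan is to derive the estimate by testing the weak formulation against $u$ itself, after rewriting \eqref{eq6.12} in the singular form \eqref{sing-PDE}. Formally, pairing $\mu^{-1} u_t + \lambda \mu^{-1} c_0 u - D_i(a_{ij}D_j u + F_i) = \mu^{-1}f$ with $u$ and integrating in space and time, the $u_t$-term becomes $\tfrac{1}{2}\int_{\bR^d_+} \mu^{-1}(x_d) u^2(T,x)\,dx$ (nonnegative, hence droppable) minus a contribution at $t=-\infty$ that vanishes along a subsequence $t_n\to-\infty$ (such a sequence exists because $x_d^{-\alpha/2}u \in L_2(\Omega_T)$ forces $\int_{\bR^d_+} x_d^{-\alpha}u^2(t_n,x)\,dx \to 0$ along some $t_n$). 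The zeroth order term is bounded below by $\lambda \nu^2 \|x_d^{-\alpha/2}u\|_{L_2(\Omega_T)}^2$ using \eqref{ellipticity-cond} and \eqref{c-mu.cond}, and spatial integration by parts on the $D_i(a_{ij}D_j u+F_i)$ term, using the boundary condition $u|_{x_d=0}=0$, produces the coercive piece $\int a_{ij}D_j u\, D_i u\,dz \ge \nu \|Du\|_{L_2(\Omega_T)}^2$ together with $\int F_i D_i u\,dz$.

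The right hand side has three terms to control. The $F$ term is handled by Young's inequality: $|\int F_i D_i u\,dz| \le \tfrac{\nu}{4}\|Du\|_{L_2}^2 + N\|F\|_{L_2}^2$. For the $f_1$ contribution, note that $\mu^{-1}|f_1|$ need not be locally integrable near $\{x_d=0\}$ when $\alpha \ge 1$, so we cannot treat it crudely; instead write
\[
\int_{\Omega_T} \mu^{-1} f_1\, u\,dz = \int_{\Omega_T} (x_d^{1-\alpha}f_1)\cdot (\mu^{-1}x_d^{\alpha-1}u)\,dz,
\]
and bound this by $N\|x_d^{1-\alpha}f_1\|_{L_2}\|x_d^{-1}u\|_{L_2}$; the one-dimensional Hardy inequality applied in the $x_d$ variable (justified since $u$ vanishes on $\{x_d=0\}$) yields $\|x_d^{-1}u\|_{L_2(\Omega_T)} \le N\|D_d u\|_{L_2(\Omega_T)}$, and another Young's inequality absorbs a small multiple of $\|Du\|^2$. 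For the $f_2$ term, Cauchy--Schwarz with the weight $x_d^{-\alpha/2}$ gives $|\int \mu^{-1}f_2 u\,dz| \le N\|x_d^{-\alpha/2}f_2\|_{L_2}\|x_d^{-\alpha/2}u\|_{L_2}$, and we balance against $\sqrt{\lambda}$ by Young: $\le \tfrac{\lambda\nu^2}{2}\|x_d^{-\alpha/2}u\|^2 + N\lambda^{-1}\|x_d^{-\alpha/2}f_2\|^2$.

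Combining everything, the terms $\tfrac{\nu}{2}\|Du\|^2$ and $\tfrac{\lambda\nu^2}{2}\|x_d^{-\alpha/2}u\|^2$ survive on the left hand side, while the right hand side is $N\big(\|F\|_{L_2}^2 + \|x_d^{1-\alpha}f_1\|_{L_2}^2 + \lambda^{-1}\|x_d^{-\alpha/2}f_2\|_{L_2}^2\big) \le N(\|F\|_{L_2}^2 + \|g\|_{L_2}^2)$ by the definition of $g$. Taking square roots delivers \eqref{1016-2c.est} with $N=N(\nu,d)$.

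The main obstacle is the rigorous justification of testing against $u$, since test functions in Definition \ref{weak-sol-def} must lie in $C_0^\infty$, whereas $u$ is only in $\sW^1_2(\Omega_T)$ with $u_t$ in the dual-like space $\bH^{-1}_2(\Omega_T)$, and moreover $\mu^{-1}fu$ is not obviously integrable. I would overcome this by (i) approximating $u$ by smooth functions with compact $t$-support vanishing near $\{x_d=0\}$ (permitted by the definition of $\sW^1_p$) and with standard truncations $u_\varepsilon = u\,\eta_\varepsilon(x_d)$ where $\eta_\varepsilon$ is a cutoff away from $x_d=0$, so all integrals are classically defined; (ii) using the chain rule identity $\tfrac{d}{dt}\int\mu^{-1}u_\varepsilon^2 = 2\langle \partial_t u_\varepsilon,\mu^{-1}u_\varepsilon\rangle$ available for elements of $\sH^1_2$ to convert the time pairing into the boundary term at $T$ (and the vanishing subsequential limit at $-\infty$); and (iii) passing to the limit $\varepsilon\to 0$ using the uniform $L_2$ bounds provided by Hardy's inequality. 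Once this is in place, the computation above goes through verbatim.
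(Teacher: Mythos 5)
Your proposal is correct and follows essentially the same route as the paper: test the singular form of the equation against $u$ (justified by Steklov averaging/mollification), obtain the differential energy inequality, split $f=f_1+f_2$, treat the $f_1$-term with Hardy's inequality in $x_d$ after extracting the weight $x_d^{1-\alpha}$, treat the $f_2$-term by pairing $x_d^{-\alpha/2}f_2$ against $x_d^{-\alpha/2}u$ with a $\lambda$-balance, absorb via Young, and integrate in time. The paper disposes of the technical justification with a one-line reference to Steklov averages where you spell it out in more detail, and it drops the $t\to-\infty$ boundary term implicitly, but the underlying computation and the use of Hardy and Young are identical.
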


\begin{proof}
By using the Steklov averages, we can formally take $u$ as the test function in Definition \ref{weak-sol-def}.
Then, it follows from \eqref{ellipticity-cond} and \eqref{c-mu.cond} that
\begin{align} \notag
&   \frac{d}{dt} \int_{\bR^{d}_+} \mu(x_d)^{-1} |u|^2  \,dx + \lambda \int_{\bR^{d}_+} x_d^{-\alpha} |u|^2 \,dx +   \int_{\bR^{d}_+} |Du|^2 \,dx \\ \label{u-test.0628}
& \leq N(\nu, d) \int_{\bR^{d}_+}  \big( |u| |f| x_d^{-\alpha}  + |F| |Du|\big)\, dx.
\end{align}
Now, we control the right-hand side of \eqref{u-test.0628}.
By using Young's inequality and Hardy's inequality for terms on the right-hand side, we see that
\[
\begin{split}
& N(\nu, d) \int_{\bR^{d}_+}  \big( |u| |f| x_d^{-\alpha}  + |F| |Du|\big)\, dx \\
& \le  N(\nu, d) \int_{\bR^{d}_+}  \big( |u/x_d| |f_1| x_d^{1-\alpha}  +  |\lambda^{1/2}x_d^{-\alpha/2}u| |\lambda^{-1/2} x_d^{-\alpha/2}f_2| + |F| |Du|\big)\, dx  \\
& \leq \frac{1}{2} \int_{\bR^d_+} \big(|Du|^2 + \lambda x_d^{-\alpha} u^2\big)\, dx \\
& \qquad + N(d, \nu) \int_{\bR^d_+} \Big[|x_d^{1-\alpha} f_1|^2 +  \lambda^{-1} x_d^{-\alpha} |f_2|^2 + |F|^2 \Big]\, dx.
\end{split}
\]
It then follows from \eqref{u-test.0628} that
\[
\begin{split}
&   \frac{d}{dt} \int_{\bR^{d}_+} \mu(x_d)^{-1} |u|^2  \,dx + \lambda \int_{\bR^{d}_+} x_d^{-\alpha} |u|^2 \,dx +   \int_{\bR^{d}_+} |Du|^2 \,dx \\
& \leq  N(\nu, d) \int_{\bR^d_+} \big(|x^{1-\alpha} f_1|^2 + \lambda^{-1} |x^{-\alpha/2} f_2|^2 + |F|^{2} \big)\, dx.
\end{split}
\]
Now, by integrating the the above inequality with respect to the time variable, we obtain \eqref{1016-2c.est}.
The lemma is proved.
\end{proof}
We prove the following simple but important result in this subsection.
\begin{theorem} \label{L-2.theorem} Let $\nu \in (0,1), \alpha \in (0,2)$, $\lambda >0$, and $F \in L_2(\Omega_T)^d$.
Also let $f = f_1 + f_2$ and assume that $x_d^{1-\alpha} f_1$ and $x_d^{-\alpha/2} f_2$ are in $L_2(\Omega_T)$.
If \eqref{ellipticity-cond} and \eqref{c-mu.cond} are satisfied, then there exists a unique weak solution $u \in \sH^{1}_2 (\Omega_T)$ of  \eqref{eq6.12}.
Moreover,
\begin{equation} \label{1016-2.est}
\begin{split}
& \|Du\|_{L_2(\Omega_T)} + \sqrt{\lambda} \| x_d^{-\alpha/2} u\|_{L_2(\Omega_T)}   \leq N \Big[ \|F\|_{L_2(\Omega_T)} + \|g\|_{L_2(\Omega_T)}   \Big],
\end{split}
\end{equation}
where $N = N(\nu, d)$ and  $g(z)= x_d^{1-\alpha} |f_1(z)| + \lambda^{-1/2}x_d^{-\alpha/2} |f_2(z)|$ for $z = (z', x_d) \in \Omega_T$.
\end{theorem}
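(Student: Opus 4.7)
The a priori estimate \eqref{1016-2.est} is identical to \eqref{1016-2c.est} and has already been proved in Lemma \ref{lemma-ener-1}, so the real content of the theorem is \emph{existence}. Uniqueness is then immediate: applying Lemma \ref{lemma-ener-1} to the difference of two $\sH^1_2(\Omega_T)$-solutions with identical data (which solves the homogeneous problem, $F=0$, $f=0$) forces them to coincide.

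For existence, I would work in the Gelfand triple $V \hookrightarrow H \hookrightarrow V'$ with $H = L_2(\bR^d_+, \mu(x_d)^{-1}\,dx)$ and $V = \sW^1_2(\bR^d_+)$, so that the time-dependent bilinear form
\begin{equation*}
\cA_t(u,v) = \int_{\bR^d_+} \bigl[\lambda c_0(t,\cdot)\mu(x_d)^{-1} uv + a_{ij}(t,\cdot) D_j u\, D_i v\bigr] dx
\end{equation*}
is bounded on $V$ and, for $\lambda>0$, uniformly coercive, since
$\cA_t(u,u) \ge \nu\|Du\|_{L_2}^2 + \nu\lambda\|x_d^{-\alpha/2}u\|_{L_2}^2$. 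The right-hand side of \eqref{eq6.12} pairs against $V$ via $\ell_t(v) = -\int F_i D_i v\,dx + \int \mu(x_d)^{-1} f v\,dx$, and using Hardy's inequality exactly as in the proof of Lemma \ref{lemma-ener-1} (to rewrite $\int\mu^{-1}f_1 v$ as $\int x_d^{1-\alpha}f_1\cdot x_d^{-1}v$) one gets $\|\ell_t\|_{V'} \lesssim \|F(t,\cdot)\|_{L_2} + \|g(t,\cdot)\|_{L_2}$, hence $\ell \in L_2((-\infty,T); V')$. Since $(-\infty,T)$ carries no initial condition, I would first solve the problem on $(S,T)\times\bR^d_+$ with zero initial data at $t=S$ by a standard Galerkin/Lions argument in this Gelfand triple, use the $S$-uniform bound from Lemma \ref{lemma-ener-1} to extract a weak limit, and then send $S\to-\infty$.

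Finally, I need to place the abstract solution $u$ (with $u\in L_2((-\infty,T);V)$ and $u_t\in L_2((-\infty,T);V')$) into the concrete space $\sH^1_2(\Omega_T)$. The equation itself supplies the required decomposition of $u_t$: writing
\begin{equation*}
u_t = \mu(x_d) D_i\bigl(a_{ij} D_j u + F_i\bigr) + f_1 + \bigl(f_2 - \lambda c_0 u\bigr),
\end{equation*}
one checks that $a_{ij}D_j u + F_i \in L_2(\Omega_T)$, $x_d^{1-\alpha} f_1 \in L_2$, and $x_d^{-\alpha/2}(f_2 - \lambda c_0 u) \in L_2$ (for the last, $\lambda\|x_d^{-\alpha/2}u\|_{L_2}^2 < \infty$ by the energy bound), which places $u_t$ in $\bH^{-1}_2(\Omega_T)$ and hence $u$ in $\sH^1_2(\Omega_T)$. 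The main obstacle I anticipate is bookkeeping rather than any new idea: matching the abstract $V$-duality with the concrete $\bH^{-1}_2$ description in the paper's definition of $\sH^1_2$, and implementing the Galerkin construction cleanly on an unbounded time interval. The coercivity and boundedness of $\cA_t$ and the $V'$-bound on $\ell_t$ are routine once Hardy's inequality is in hand.
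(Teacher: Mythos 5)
Your proposal is correct and lands on the same overall strategy as the paper: prove the a priori estimate first (you correctly identify that this is Lemma \ref{lemma-ener-1}), observe that uniqueness follows immediately, and then build existence by a Galerkin-type argument followed by passage to the limit. The implementation differs in one place: the paper truncates in \emph{both} space and time, replacing $\Omega_T$ by the increasing sequence of bounded space--time cylinders $\widehat{Q}_k = (-k,\min\{k,T\})\times B_k^+$, solves \eqref{w-eqn.Qn} on each $\widehat{Q}_k$ (with zero data on the lateral and initial boundary), extends $u_k$ by zero, and extracts a weak limit using the $k$-uniform energy bound. You instead keep the full spatial half-space, set up an abstract Gelfand triple $\sW^1_2(\bR^d_+)\hookrightarrow L_2(\bR^d_+,\mu^{-1}\,dx)\hookrightarrow V'$, solve by Lions' theorem on $(S,T)$ with zero data at $t=S$, and send only $S\to-\infty$. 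Both routes are standard. The paper's bounded-domain truncation is more elementary and sidesteps any discussion of density of $V$ in $H$ or separability of $V$ needed for the Lions machinery; your Gelfand-triple version avoids the extra step of verifying that the extended-by-zero $u_k$ converge on the unbounded spatial domain, and makes the role of the coercivity of the bilinear form more transparent. Your final bookkeeping step --- reading off the $\bH^{-1}_2$ decomposition of $u_t$ directly from the equation, namely $u_t = \mu D_i(a_{ij}D_ju + F_i) + f_1 + (f_2 - \lambda c_0 u)$ with each piece landing in the right weighted $L_2$ space by the energy estimate --- is exactly the right way to place the abstract solution in $\sH^1_2(\Omega_T)$, and matches how the paper implicitly identifies $u_t$.
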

\begin{proof}
We approximate the domain $\Omega_T$ by a sequence of increasing bounded domains $\{\widehat{Q}_k\}_k$ given by
\begin{equation*} %\label{hat-Q}
\widehat{Q}_k= (-k, \min\{k, T\}) \times B_k^+, \quad k \in \mathbb{N}.
\end{equation*}
For each fixed  $k \in \mathbb{N}$, we consider the equation of $u$ in $\widehat{Q}_k$
\begin{equation} \label{w-eqn.Qn}
 u_t + \lambda c_0(z)u  - \mu(x_d) D_i\big( a_{ij}(z) D_{j} u + F_i\big) = f(z) \quad \text{in} \quad \widehat{Q}_k
\end{equation}
with the boundary condition $u=0$ on $ (-k, \min\{k, T\}) \times \partial B_k^+$ and zero initial data at $\{-k\} \times B_k^+$.
Then, using the energy estimates as in the proof of Lemma \ref{lemma-ener-1},  if $u_k \in \sH^{1}_2(\widehat{Q}_k)$ is a weak solution of \eqref{w-eqn.Qn}, we have the following a priori estimate
\[
\begin{split}
& \|x_d^{-\alpha} u_k\|_{L_\infty((-k, \min\{k, T\}), L_2(B_k^+))} + \sqrt{\lambda} \|x_d^{-\alpha/2} u_k\|_{L_2(\widehat{Q}_k)} + \|Du_k\|_{L_2(\widehat{Q}_k)} \\
&\leq N \Big[  \|F\|_{L_2(\widehat{Q}_k)} + \|x_d^{1-\alpha}f_1\|_{L_2(\widehat{Q}_k)}  + \lambda^{-1/2} \|x_d^{-\alpha/2}f_2\|_{L_2(\widehat{Q}_k)}\Big]
 \end{split}
\]
for $N = N(d, \nu)>0$.
From this and  the Galerkin method, we see that for each $k \in \bN$, there exists a unique weak solution $u_k \in \sH^{1}_2(\widehat{Q}_k)$ of \eqref{w-eqn.Qn}.
By taking $u_k =0$ in $\Omega_T \setminus \widehat{Q}_k$, we see that $u_k$ as a function defined in $\Omega_T$ satisfying
\[
\begin{split}
& \|{x_d^{-\alpha}}u_k\|_{L_\infty((-\infty, T), L_2(\bR^d_+))} + \sqrt{\lambda} \|x_d^{-\alpha/2} u_k\|_{L_2(\Omega_T)} + \|Du_k\|_{L_2(\Omega_T)} \\
&\leq N \Big[ \|F\|_{L_2(\Omega_T)} + \|x_d^{1-\alpha} f_1\|_{L_2(\Omega_T)} +  \lambda^{-1/2} \|x_d^{-\alpha/2} f_2\|_{L_2(\Omega_T)} \Big].
 \end{split}
\]
From this, and by taking a subsequence still denoted by $\{u_k\}$, we can find $u \in \sH^1_2(\Omega_T)$ such that
\[
\begin{split}
& u_k \rightharpoonup u  \quad \text{ in } \quad L_2(\Omega_T, x_d^{-\alpha})  \quad \text{as} \quad k \rightarrow \infty,\\
& Du_k \rightharpoonup Du  \quad \text{ in } \quad L_2(\Omega_T)   \quad \text{as} \quad k \rightarrow \infty.
\end{split}
\]
Then, using the weak formulation in Definition \ref{weak-sol-def} and passing to the limit, we see that $u \in \sH^1_2(\Omega_T)$ is a weak solution of \eqref{eq6.12} and satisfies \eqref{1016-2.est}.
Note that the uniqueness of $u \in \sH^{1}_2(\Omega_T)$ also follows from this estimate, and  therefore the proof of the theorem is completed.
\end{proof}

%%%%%%%%%%%%%%%%%%%%%%%%%%%%%%%%%%%%%%%%%%%%

\section{Equations with coefficients depending only on the \texorpdfstring{$x_d$}{}-variable} \label{sec-4}

Let  $\overline{c}_0: \bR_+ \rightarrow \bR_+$ be measurable satisfying
\begin{equation} \label{a-c.cond}
\nu \leq  \overline{c}_0(x_d) \leq \nu^{-1} \quad \text{ for } x_d \in \bR_+
\end{equation}
for a given constant $\nu \in (0,1)$.
Also, let $(\overline{a}_{ij})_{i,j=1}^d: \bR_+ \rightarrow \mathbb{R}^{d \times d}$ be a  matrix of measurable functions satisfying the following ellipticity  and boundedness conditions
\begin{equation} \label{elli-cond}
\nu|\xi|^2 \leq \overline{a}_{ij}(x_d) \xi_i \xi_j, \quad |\overline{a}_{ij}(x_d)| \leq \nu^{-1}  \quad \text{ for } x_d \in \bR_+ \end{equation}
and $\xi =(\xi_1, \xi_2, \ldots, \xi_d) \in \bR^d$.
For a fixed number $\lambda > 0$, let us denote
\[
\sL_0 u =u_t + \lambda \overline{c}_0(x_d) u - \mu(x_d) D_i\big( \overline{a}_{ij}(x_d) D_{j} u),
\]
where $\mu$ satisfies \eqref{c-mu.cond}.
We study the following equation
\begin{equation} \label{x-d.eqn}
\left\{
\begin{array}{rccl}
 \sL_0 u & = & \mu(x_d) D_i F_i  + f   & \quad \text{in} \quad \Omega_T,  \\
u  & = & 0 & \quad \text{on} \quad \{x_d =0\},
\end{array} \right.
\end{equation}
which is a simple form of \eqref{eq6.12} as  the coefficients only depend on $x_d$.

\medskip

The main result of this section is the following theorem, which is a special case of Theorem \ref{main-thrm}.

%======
\begin{theorem}\label{sim-eqn-thrm}
Let $\nu \in (0,1)$, $\alpha \in (0,2)$, $\lambda >0$, and suppose that \eqref{c-mu.cond}, \eqref{a-c.cond}, and \eqref{elli-cond} are satisfied.
Also, let $F = (F_1, F_2, \ldots, F_d) \in L_p(\Omega_T)^d$,  $f = f_1 + f_2$ such that $x_d^{1-\alpha} f_1$ and $x_d^{-\alpha/2} f_2$ are in $L_p(\Omega_T)$, where $p \in (1, \infty)$.
Then, there exists a unique  weak solution $u \in \sH^{1}_p (\Omega_T)$ of  \eqref{x-d.eqn}.
Moreover, there is a constant $N = N(\nu, d, \alpha, p) >0$ such that
\begin{equation} \label{apr-est}
\begin{split}
& \|Du\|_{L_p(\Omega_T)} + \sqrt{\lambda} \|x_d^{-\alpha/2} u\|_{L_p(\Omega_T)}   \\
&  \leq N\Big[ \|F\|_{L_p(\Omega_T)} +  \|x_d^{1-\alpha} f_1\|_{L_p(\Omega_T)}  + \lambda^{-1/2}\|x_d^{-\alpha/2} f_2\|_{L_p(\Omega_T)}\Big].
\end{split}
\end{equation}
\end{theorem}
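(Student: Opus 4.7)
The plan is to follow the Krylov-style kernel-free strategy outlined in the introduction. The $L_2$-solvability has already been settled by Theorem \ref{L-2.theorem}, so the real task is to upgrade this to an a priori $L_p$-estimate of the form \eqref{apr-est}; existence and uniqueness in $\sH_p^1(\Omega_T)$ will then follow from a standard density/approximation argument combined with the $L_2$ theory (e.g., approximate $F$, $f_1$, $f_2$ by $L_2\cap L_p$ functions, solve in $L_2$, and pass to the limit using the a priori estimate). I would also exploit the scaling $(t,x)\to(s^{2-\alpha}t,sx)$ to normalize $\lambda$ (up to a harmless factor), reducing the task to a localized mean-oscillation estimate on the cylinders $Q_\rho^+(z_0)$.

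The key ingredient is a mean-oscillation estimate for the derivatives of solutions to the homogeneous equation $\sL_0 v=0$. Since $\overline{a}_{ij}$ and $\overline{c}_0$ depend only on $x_d$, differentiating in $t$ or $x'$ preserves the equation, so $D_{t,x'}^k v$ again solves a homogeneous equation of the same type. This makes it natural to work with the tangential derivatives $D_{x'} v$ together with the conormal flux $U := \overline{a}_{dj} D_j v$, as suggested in the paper's introduction. The aim is to show that on a cylinder $Q_\rho^+(z_0)$ of the type \eqref{Q.def}, one has a Campanato-type decay
\[
\bigl(|D_{x'}v - (D_{x'}v)_{Q_{\rho/2}^+(z_0)}| + |U-(U)_{Q_{\rho/2}^+(z_0)}|\bigr)_{Q_{\rho/2}^+(z_0)} \le N\,\kappa^\sigma \,\bigl(|Dv|+\sqrt{\lambda}\,x_d^{-\alpha/2}|v|\bigr)_{Q_{\rho}^+(z_0)}
\]
for some $\sigma\in(0,1)$ whenever one rescales from $Q_\rho^+$ down to a subcylinder of relative size $\kappa$. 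Once such a decay is established, splitting an arbitrary solution $u$ as $u=v+w$ where $v$ solves the homogeneous equation with the same boundary values as $u$ on $\partial Q_\rho^+$ and $w$ absorbs the data $F,f$ through the $L_2$-estimate of Lemma \ref{lemma-ener-1}, one obtains pointwise control of the sharp function $(D_{x'}u)^{\#}$ and $U^{\#}$ on the dyadic grid $\{\bC_n\}$. Combining with Hardy's inequality (to handle $x_d^{-\alpha/2}u$) and with the invertibility $D_du = \overline{a}_{dd}^{-1}(U - \sum_{j<d}\overline{a}_{dj}D_j u)$ gives sharp-function control of the full gradient $Du$.

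For the Hölder/decay estimate itself there are two regimes, dictated by the cylinder geometry in \eqref{Q.def}–\eqref{r.def}. In the interior regime $\rho < x_{0d}$, the cylinder $Q_\rho^+(z_0)$ sits inside $\{x_d>0\}$ and the rescaling $(t,x)\mapsto(s^{2-\alpha}t,\,s^{1-\alpha/2}x_{0d}^{-\alpha/2}x)$ with $s=x_{0d}$ turns $\sL_0$ into a uniformly parabolic divergence-form operator with coefficients depending only on one spatial variable, to which the classical Hölder/Schauder interior estimates from \cite{Dong-Kim11} apply directly. In the boundary regime $\rho \ge x_{0d}$, after the analogous scaling one is reduced to studying $\sL_0 v=0$ in a cylinder touching $\{x_d=0\}$ with $\mu(x_d)\sim x_d^\alpha$. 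The strategy here is an energy method: test the equation against $\varphi^2 D_{x'}^k v$ in suitable weighted $L_2$-spaces to get a Caccioppoli inequality for tangential derivatives, then apply a weighted Sobolev embedding tailored to the measure $x_d^{-\alpha}\,dx$ to upgrade $L_2$-bounds to $L_p$-bounds, and finally iterate (Moser-style bootstrap) to reach $L_\infty$ and a Hölder modulus. For $U$, one uses the equation in the form \eqref{sing-PDE} to rewrite $D_d U$ in terms of lower-order quantities already controlled in the bootstrap.

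The hardest step will be the boundary Hölder estimate for $\alpha\in(1,2)$: in this range $\mu(x_d)^{-1}$ and $\mu(x_d)^{-1}\overline{c}_0$ are not locally integrable near $\{x_d=0\}$, so the natural energy identity only makes sense through the divergence-form pairing in Definition \ref{weak-sol-def}, and the weighted Sobolev embedding loses integrability exponent as $\alpha\to 2$. Overcoming this requires the bootstrap alluded to in the introduction, carried out on $D_{x'}u$ and $U$ rather than on $Du$ directly, since $D_du$ need not even be a well-defined trace on $\{x_d=0\}$. Once this Hölder estimate is in hand, the rest is mechanical: the sharp-function bound combined with the generalized Fefferman–Stein theorem of \cite{MR3812104} on the space of homogeneous type $(\Omega_\infty,\varrho)$ and the Hardy–Littlewood maximal inequality produce \eqref{apr-est}, and the method of continuity from the already solvable $L_2$ case yields existence and uniqueness in $\sH_p^1(\Omega_T)$.
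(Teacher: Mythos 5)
Your overall architecture matches the paper's: interior H\"older estimates for $D_{x'}u$ and the conormal flux $U=\overline{a}_{dj}D_ju$ after a $z_0$-dependent parabolic rescaling (Proposition \ref{inter-pointwise-est}), boundary H\"older estimates via weighted Caccioppoli, weighted Sobolev embedding, and a bootstrap for $\alpha\in[3/2,2)$ (Proposition \ref{prop3}), then a decomposition $u=h+w$ with $w$ absorbing the data through the $L_2$ estimate (Proposition \ref{osc-prop}), and finally the Fefferman--Stein sharp function theorem plus Hardy--Littlewood on the space of homogeneous type. Two genuine gaps remain, however.

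First, the Fefferman--Stein route you outline yields \eqref{apr-est} \emph{only for $p>2$}: the mean oscillation bound controls the sharp functions of $\cU$ and $v$ by $\cM(|Du|^2)^{1/2}$ and $\cM(|v|^2)^{1/2}$, and the Hardy--Littlewood maximal inequality applied to $|Du|^2$ requires $p/2>1$. You say nothing about how to obtain the range $p\in(1,2)$. The paper handles that range by a separate duality argument: one introduces the adjoint problem \eqref{adj-veqn.1} with data in $L_q$, $q=p/(p-1)>2$, applies the $p>2$ result to the adjoint with time reversed, and reads off the $L_p$ a priori estimate from the bilinear identity \eqref{uv-dual.06} together with Hardy's inequality. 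Without this, your proof does not deliver the full stated range $p\in(1,\infty)$.

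Second, your existence argument is circular. You propose to approximate $F,f_1,f_2$ by $L_2\cap L_p$ data, solve in $L_2$, and ``pass to the limit using the a priori estimate''; but the a priori estimate \eqref{apr-est} applies to solutions that are \emph{already known} to lie in $\sH_p^1(\Omega_T)$, and nothing so far guarantees that the $\sH_2^1$-solution belongs to that space. (Your alternative invocation of the method of continuity has the same problem: within Theorem \ref{sim-eqn-thrm} there is no simpler operator for which $\sH_p^1$-solvability is already known.) The paper closes this loop by first taking $F,f_1,f_2\in C_0^\infty(\Omega_T)$ and proving pointwise bounds \eqref{eq2.49}--\eqref{eq2.49b} (inhomogeneous variants of Propositions \ref{prop3} and \ref{inter-pointwise-est}), which show that $Du$ and $\sqrt{\lambda}\,x_d^{-\alpha/2}u$ are globally bounded and hence that the $L_2$-solution in fact lies in $\sH_p^1(\Omega_T)$; only then is the approximation argument for general $L_p$-data legitimate. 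You need to add this intermediate regularity step, or an equivalent one, before the density/limit argument can be run.
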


The rest of the section is to prove Theorem \ref{sim-eqn-thrm}.
Our idea is to first establish mean oscillation estimates and then use the Fefferman-Stein theorem on sharp functions and the Hardy-Littlewood maximal function theorem in spaces of homogeneous type.
It is therefore important to derive regularity estimates for homogeneous equations.
In the next two subsections (Subsections \ref{bdr-schauder} and  \ref{int-schauder}), we derive the boundary H\"older estimates and interior H\"older estimates for solutions to homogeneous equations.
The mean oscillation estimates of solutions and the proof of Theorem  \ref{sim-eqn-thrm} are given in Subsection \ref{oss-est-simple-coffe}.

\subsection{Boundary H\"older estimates for homogeneous equations} \label{bdr-schauder}
In this subsection, we consider the following homogeneous equation
\begin{equation}
                    \label{eq6.12h}
                    \left\{
                    \begin{array}{rccl}
\sL_0 u & = & 0 & \quad \text{in} \quad Q_1^+, \\
u & = & 0 & \quad \text{on} \quad Q_1 \cap \{x_d =0\}.
\end{array} \right.
\end{equation}
Our goal is to prove Proposition \ref{prop3} below on H\"older estimates for weak solutions.
We begin with the following local energy estimate.
\begin{lemma}[Energy inequality]
                        \label{lem1}
Suppose that \eqref{c-mu.cond},  \eqref{a-c.cond}, and \eqref{elli-cond} are satisfied in $Q_1^+$.
If $u\in \sH^1_2(Q_1^+)$ is a weak solution of \eqref{eq6.12h} in $Q_1^+$, then
\begin{equation}
                    \label{eq6.31}
\sup_{s\in (-1/2,0)}\int_{B_{1/2}^+}u^2(s,x) x_d^{-\alpha}\,dx+
\int_{Q_{1/2}^+}(\lambda u^2 x_d^{-\alpha}+|Du|^2)\,dz
\le N \int_{Q_{1}^+} u^2 \,dz,
\end{equation}
where $N=N(d,\nu,\alpha)>0$.
\begin{proof}
Let $\eta\in C_0^\infty((-1,1))$ and $\zeta\in C_0^\infty(B_1)$ be nonnegative functions such that $\eta=1$ on $(-1/2,1/2)$ and $\zeta=1$ on $B_{1/2}$.
We test \eqref{eq6.12h} by $u\mu^{-1} \eta^\beta(t)\zeta^2(x)$, where $\beta=2/(2-\alpha)$, and integrate by parts. We then get
\begin{align}
                        \label{eq7.14}
&\sup_{s\in (-1,0)}\int_{B_{1/2}^+}u^2(s,x) x_d^{-\alpha}\eta^\beta(s)\zeta^2(x)\,dx+
\int_{Q_{1}^+}(\lambda u^2 x_d^{-\alpha}+|Du|^2)\eta^\beta\zeta^2 \,dz\notag\\
&\le N\int_{Q_{1}^+}  u^2x_d^{-\alpha}\eta^{\beta-1}|\eta_t|\zeta^2 +|Du||u|\eta^\beta \zeta |D\zeta|\,dz.
\end{align}
To estimate the first term on the right-hand side, we use H\"older's inequality to get
\begin{align}
                    \label{eq6.43}
&N\int_{Q_{1}^+}  u^2x_d^{-\alpha}\eta^{\beta-1}|\eta_t|\zeta^2\,dz\notag\\
&\le N\Big(\int_{Q_{1}^+} u^2x_d^{-2}\eta^\beta\zeta^2\,dz\Big)^{\alpha/2}\Big(\int_{Q_{1}^+} u^2\zeta^2\,dz\Big)^{1-\alpha/2}\notag\\
&\le N\Big(\int_{Q_{1}^+} (|D_du|^2\zeta^2+u^2|D_d\zeta|^2)\eta^\beta\,dz\Big)^{\alpha/2}\Big(\int_{Q_{1}^+} u^2\zeta^2\,dz\Big)^{1-\alpha/2}\notag\\
&\le \frac 1 3 \int_{Q_{1}^+} |Du|^2\zeta^2\eta^\beta\,dz+ N\int_{Q_{1}^+} u^2\,dz,
\end{align}
where we used $\beta-1=\alpha\beta/2$ in the first inequality, Hardy's inequality in the second inequality, and Young's inequality in the last inequality.
By Young's inequality, the second term on the right-hand side of \eqref{eq7.14} is bounded by
\begin{equation}
                        \label{eq7.16}
N\int_{Q_{1}^+} |Du||u|\eta^\beta \zeta |D\zeta|\,dz
\le \frac 1 3 \int_{Q_{1}^+} |Du|^2 \eta^\beta \zeta^2\,dz+N\int_{Q_{1}^+} u^2\,dz.
\end{equation}
Combining \eqref{eq7.14}, \eqref{eq6.43}, and \eqref{eq7.16}, we get \eqref{eq6.31}.
The lemma is proved.
\end{proof}
\end{lemma}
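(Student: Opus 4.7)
The plan is a Caccioppoli-type energy estimate via the standard recipe: test the equation against $u$ multiplied by a space--time cutoff, integrate by parts in $t$ and in $x$, and use ellipticity to absorb the resulting derivative terms. Because the weak formulation absorbs the factor $\mu(x_d)^{-1}$ into the time-derivative and the $\lambda c_0$ term (see Definition~\ref{weak-sol-def} and the singular form \eqref{sing-PDE}), the natural test function is $u\cdot \mu(x_d)^{-1}\eta(t)\zeta(x)^2$, where $\eta\in C_0^\infty((-1,1))$ and $\zeta\in C_0^\infty(B_1)$ are nonnegative cutoffs equal to $1$ on $(-1/2,1/2)$ and on $B_{1/2}$, respectively. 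The pairing against $u_t$ is justified rigorously by a standard Steklov-average argument. After integration by parts, ellipticity produces on the left-hand side the three coercive quantities appearing in \eqref{eq6.31} (localized by $\eta\zeta^2$), modulo two error terms: a spatial one, $\int_{Q_1^+}|Du|\,|u|\,\eta\zeta|D\zeta|\,dz$, and a temporal one, $\int_{Q_1^+} u^2 x_d^{-\alpha}|\eta_t|\zeta^2\,dz$.

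The spatial error is routine: Young's inequality gives $\varepsilon\int|Du|^2\eta\zeta^2\,dz+C_\varepsilon\int_{Q_1^+}u^2\,dz$, and the gradient piece is absorbed into the coercive $|Du|^2$ term on the left. The temporal error is the real difficulty: its weight $x_d^{-\alpha}$ is not locally integrable for $\alpha\in[1,2)$, so one cannot simply bound $\int u^2 x_d^{-\alpha}$ by $\int u^2$ as in the nondegenerate case. To defeat this I would raise the time cutoff in the test function to a higher power, namely $\eta^\beta$ with $\beta=2/(2-\alpha)$. The decisive algebraic identity $\beta-1=\alpha\beta/2$ lets H\"older's inequality with conjugate exponents $(2/\alpha,\,2/(2-\alpha))$ split the bad term as
$$
\int u^2 x_d^{-\alpha}\eta^{\beta-1}|\eta_t|\zeta^2\,dz
\le \Big(\int u^2 x_d^{-2}\eta^\beta\zeta^2\,dz\Big)^{\alpha/2}\Big(\int u^2\zeta^2\,dz\Big)^{1-\alpha/2}.
$$
Hardy's inequality in the $x_d$-direction, which applies since $u|_{x_d=0}=0$, replaces $x_d^{-2}u^2$ by $|D_d u|^2\zeta^2+u^2|D_d\zeta|^2$, and a weighted Young's inequality $a^{\alpha/2}b^{1-\alpha/2}\le \varepsilon a+C_\varepsilon b$ absorbs the $|Du|^2$ factor into the coercive term on the left. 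Collecting the estimates yields \eqref{eq6.31} with a constant $N=N(d,\nu,\alpha)$.

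In short, the main obstacle is taming the singular weight $x_d^{-\alpha}$ that appears in the temporal error, and the whole argument hinges on the choice $\beta=2/(2-\alpha)$---the unique exponent that couples that weight through H\"older and Hardy to the $|Du|^2$ quantity one already controls. Once this trick is in place, the remainder is standard energy bookkeeping, and no maximum-principle or De Giorgi type input is needed, consistent with the paper's remark that the results extend to systems.
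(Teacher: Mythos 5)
Your proposal is correct and follows essentially the same route as the paper: test with $u\mu^{-1}\eta^\beta\zeta^2$ for $\beta=2/(2-\alpha)$, split the temporal error via H\"older with exponents $(2/\alpha,2/(2-\alpha))$ using the identity $\beta-1=\alpha\beta/2$, apply Hardy's inequality (valid since $u=0$ on $\{x_d=0\}$) to trade $x_d^{-2}u^2$ for $|D_d u|^2$, and absorb by Young. No substantive gaps.
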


\begin{lemma}
                \label{lem2}
Under the conditions of Lemma \ref{lem1}, we have
\begin{equation}
                        \label{eq7.28}
\int_{Q_{1/2}^+}u_t^2 x_d^{-\alpha}\,dz
\le N \int_{Q_{1}^+} u^2 \,dz,
\end{equation}
where $N=N(d,\nu,\alpha)>0$.
\end{lemma}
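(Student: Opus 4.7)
The plan exploits the $t$-independence of the coefficients $\overline{a}_{ij}$, $\overline{c}_0$, and $\mu$: formally, $v:=u_t$ satisfies exactly the same homogeneous equation \eqref{eq6.12h} as $u$, with the same Dirichlet boundary condition on $\{x_d=0\}$. To justify this rigorously I work with the Steklov averages $u^h(t,x):=h^{-1}\int_t^{t+h}u(s,x)\,ds$, which are $C^1$ in $t$ and satisfy the same equation on a time-shrunken cylinder; the estimates below are carried out at the Steklov-averaged level and then passed to the limit $h\to 0^+$.

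The heart of the proof is a Caccioppoli-type identity for $u_t$. I test the strong form $u_t+\lambda\overline{c}_0 u = \mu D_i(\overline{a}_{ij} D_j u)$ against $u_t\mu^{-1}\zeta^2\eta^\beta$, with $\beta=2/(2-\alpha)$ and smooth cutoffs $\zeta(x),\eta(t)$ localized on nested cylinders $Q_{\rho_1}^+\subset Q_{\rho_2}^+$. Integration by parts in $x$ on the divergence term (the boundary at $\{x_d=0\}$ vanishes because $u_t=0$ there) brings $\int u_t^2\mu^{-1}\zeta^2\eta^\beta\,dz$ to the LHS. On the RHS I find: (i) a zeroth-order term $\lambda\int\overline{c}_0 uu_t\mu^{-1}\zeta^2\eta^\beta$, which after integration by parts in $t$ (using $t$-independence of $\overline{c}_0,\mu$) becomes a multiple of $\lambda\int\overline{c}_0\mu^{-1}u^2\zeta^2\eta^{\beta-1}\eta_t$ and is bounded by $N\int_{Q_1^+}u^2\,dz$ via \eqref{eq6.31}; (ii) the symmetric-in-$(i,j)$ part of $\int\overline{a}_{ij}D_juD_iu_t\zeta^2\eta^\beta$, handled the same way to produce a term of size $\int|Du|^2\le N\int u^2$; and (iii) a cross term $2\int\overline{a}_{ij}D_juu_t\zeta D_i\zeta\eta^\beta$, bounded by Young's inequality (and $\mu\le O(1)$ on $Q_1^+$) by $\epsilon\int u_t^2\mu^{-1}\zeta^2\eta^\beta+C_\epsilon\int|Du|^2$, the latter again controlled by \eqref{eq6.31}.

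The main obstacle is the contribution from the antisymmetric part of $\overline{a}_{ij}$, namely $I_a:=\int\tfrac12(\overline{a}_{ij}-\overline{a}_{ji})D_juD_iu_t\zeta^2\eta^\beta\,dz$, for which integration by parts in $t$ yields only a tautological identity. I bound it by Young's inequality as
$$
|I_a|\le \epsilon_1\int|Du_t|^2\zeta^2\eta^\beta\,dz+C\epsilon_1^{-1}\int|Du|^2\zeta^2\eta^\beta\,dz,
$$
and then handle the $|Du_t|^2$ piece by invoking Lemma \ref{lem1} once more, now applied to $u_t$ itself (which solves the same homogeneous equation) on a slightly enlarged cylinder $Q_{\rho_3}^+\supset Q_{\rho_2}^+$, giving $\int_{Q_{\rho_2}^+}|Du_t|^2\,dz\le N\int_{Q_{\rho_3}^+}u_t^2\,dz$. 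Setting $f(\rho):=\int_{Q_\rho^+}u_t^2\mu^{-1}\,dz$ (which dominates $\int u_t^2$ because $\mu^{-1}\ge c>0$ on $Q_1^+$), the combined estimate yields a recursive inequality of the form
$$
f(\rho_1)\le \frac{C}{(\rho_3-\rho_1)^k}\int_{Q_1^+}u^2\,dz+\theta\,f(\rho_3),\qquad \tfrac12\le \rho_1<\rho_3\le 1,
$$
with $\theta=C\epsilon_1$ arbitrarily small. A standard Giaquinta-type iteration lemma absorbs the $\theta f(\rho_3)$ term and produces $f(1/2)\le N\int_{Q_1^+}u^2\,dz$, which is exactly \eqref{eq7.28} since $\mu^{-1}$ is comparable to $x_d^{-\alpha}$.
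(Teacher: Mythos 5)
Your proof is correct and follows essentially the same route the paper sketches: test with $u_t\mu^{-1}\eta^\beta\zeta^2$, integrate by parts, apply Lemma~\ref{lem1} to $u_t$ (which solves the same equation with the same boundary condition), and close with a nested-cylinder iteration. You have usefully identified that the term genuinely forcing the iteration is the one producing $|Du_t|^2$ (in your account, the antisymmetric part of $\overline a_{ij}$), which is the content of the paper's terse ``standard iteration argument.''
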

\begin{proof}
We test the equation with $u_t\mu^{-1} \eta^\beta(t)\zeta^2(x)$, integrate by parts, and use Lemma \ref{lem1} by noting that $u_t$ satisfies the same equation as $u$ with the same boundary condition on $\{x_d=0\}$ and a standard iteration argument.
\end{proof}

Recall that for each $\beta \in (0,1)$, the $\beta$-H\"older semi-norm in  the spatial variable of a function $u$ on an open set $Q\subset \bR^{d+1}$ is defined by
\[
\begin{split}
\llbracket u\rrbracket_{C^{0, \beta}(Q)} = \sup\Big\{ & \frac{|u(t,x) - u(t,y)|}{|x-y|^{\beta}}:  x \not =y, \  (t,x), (t,y) \in Q \Big\}.
\end{split}
\]
For $k, l \in \mathbb{N} \cup \{0\}$,  we denote
\[
\|u\|_{C^{k, l}(Q)}  = \sum_{i=0}^k \sum_{|j| \leq l}\|\partial_t^i D_{x}^j u\|_{L_\infty(Q)} .
\]
Moreover, the following notation for  the H\"{o}lder norm of $u$ on $Q$ is used
\[
\|u\|_{C^{k, \beta}(Q)} = \|u\|_{C^{k,0}(Q)} + \sum_{i=0}^{k} \llbracket \partial_t^i u\rrbracket_{C^{0, \beta}(Q)}.
\]

\begin{corollary}
                                    \label{cor1}
Under the conditions of Lemma \ref{lem1}, for any integer $k\ge 0$, we have
\begin{equation}
                        \label{eq7.37}
\|u\|_{C^{k,1/2}(Q_{1/2}^+)}
\le N \|u\|_{L_2(Q_1^+)},\quad
\|D_{x'}u\|_{C^{k,1/2}(Q_{1/2}^+)}\le N \|D_{x'}u\|_{L_2(Q_1^+)},
\end{equation}
where $N=N(d,\nu,\alpha,k)>0$.
%and
%\begin{equation}
%\sqrt\lambda[ux_d^{-\alpha/2}]_{C^{1/4,1/2}(Q_{1/2}^+)}
%\le N \|Du\|_{L_2(Q_1^+)}.
%\end{equation}
\end{corollary}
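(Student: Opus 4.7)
The key observation is that because the coefficients $\bar a_{ij}$ and $\bar c_0$ depend only on $x_d$, the homogeneous problem \eqref{eq6.12h} is translation-invariant in $t$ and $x'$. Consequently, for any $i\ge 0$ and any multi-index $\beta$ in the tangential spatial directions, $v:=\partial_t^i D_{x'}^\beta u$ is itself a weak solution of the same homogeneous equation with the same Dirichlet condition $v|_{\{x_d=0\}}=0$, as one verifies rigorously through a difference-quotient argument. Applying Lemmas~\ref{lem1} and~\ref{lem2} iteratively on a nested sequence of cylinders shrinking from $Q^+_1$ to $Q^+_{1/2}$, I would obtain, for every $i$ and $\beta$,
\begin{equation*}
\|\partial_t^i D_{x'}^\beta u\|_{L_2(Q^+_{3/4})} + \|D(\partial_t^i D_{x'}^\beta u)\|_{L_2(Q^+_{3/4})} \le N(i,|\beta|)\,\|u\|_{L_2(Q_1^+)}.
\end{equation*}

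The next step is to convert these $L_2$ bounds into pointwise $L_\infty$ and $C^{0,1/2}$ estimates. Since each $v=\partial_t^i D_{x'}^\beta u$ inherits the vanishing condition on $\{x_d=0\}$, the fundamental theorem of calculus gives
\begin{equation*}
|v(t,x',x_d)-v(t,x',y_d)| \le |x_d-y_d|^{1/2}\,\|D_d v(t,x',\cdot)\|_{L_2},
\end{equation*}
yielding $1/2$-H\"older regularity in $x_d$ pointwise in $(t,x')$. Since the bounds from the first step hold for \emph{arbitrarily many} tangential and temporal derivatives, standard Sobolev embedding in the $(t,x')$ variables turns the right-hand side into an $L_\infty$ bound in $(t,x')$, and the same estimates applied to $x'$-derivatives yield Lipschitz (and better) regularity in $x'$. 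A triangle inequality then combines these into the joint $C^{0,1/2}$ regularity of $\partial_t^i u$ in the full spatial variable, and the $L_\infty$ bound on $\partial_t^i u$ required for the first inequality drops out along the way.

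For the second inequality I would apply the first inequality to $w:=D_{x'}u$: by the translation invariance noted above, $w$ solves $\sL_0 w=0$ with $w|_{\{x_d=0\}}=0$, so the first estimate produces the desired bound with $\|w\|_{L_2(Q_1^+)}=\|D_{x'}u\|_{L_2(Q_1^+)}$ on the right-hand side.

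The main obstacle is the $x_d$-direction, where the merely measurable dependence of the coefficients prevents any direct differentiation of the equation and hence any analogue of the first-step bootstrap in that variable. The Dirichlet boundary condition is what rescues the argument: it converts $L_2$ control on $D_d v$ into a pointwise $1/2$-H\"older modulus via the fundamental theorem of calculus, and the exponent $1/2$ is precisely what one gets from a single weighted $x_d$-derivative, matching the exponent in the conclusion.
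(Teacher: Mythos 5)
Your argument is correct and essentially reproduces the paper's proof: apply Lemmas~\ref{lem1} and~\ref{lem2} inductively (via difference quotients) to all tangential/time derivatives $\partial_t^i D_{x'}^\beta u$, obtaining the $L_2$ bound on $\partial_t^k D_{x'}^j D_d^l u$ with $l=0,1$ from \eqref{eq8.11}, and then pass to $C^{k,1/2}$ by Sobolev embedding---which, as you correctly unpack, consists of the one-dimensional fundamental-theorem-of-calculus estimate (giving the $1/2$-H\"older modulus in $x_d$) together with high-order Sobolev embedding in $(t,x')$; the second inequality in \eqref{eq7.37} then follows by applying the first to $D_{x'}u$. One small imprecision: the Dirichlet condition on $\{x_d=0\}$ is not what produces the $1/2$-H\"older modulus in $x_d$ (that is just FTC plus Cauchy--Schwarz between two interior points); it is used to get the $L_\infty$ bound on $v$ itself from that modulus, which you in fact note ``drops out along the way.''
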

\begin{proof}
From Lemmas \ref{lem1} and \ref{lem2}, by induction we have
\begin{equation}
                \label{eq8.11}
\int_{Q_{1/2}^+}|\partial_t^k D_{x'}^j D_{d}^l u|^2 \,dz
\le N(d,\nu,\alpha,k,j,l) \int_{Q_{1}^+} u^2 \,dz
\end{equation}
for any integers $k,j\ge 0$ and $l=0,1$.
Then the first inequality in \eqref{eq7.37} follows from the Sobolev embedding theorem.
The second inequality follows from the first one by noting that $D_{x'} u$ satisfies the same equation as $u$ with the same boundary condition on $\{x_d=0\}$.
\end{proof}

Next, we show higher regularity of $u$.
\begin{proposition}  \label{prop3}
Under the conditions of Lemma \ref{lem1}, we have
\begin{equation}
                        \label{eq8.10}
\|u\|_{C^{1,1}(Q_{1/2}^+)}+
\|D_{x'}u\|_{C^{1,1}(Q_{1/2}^+)}+\|U\|_{C^{1,\gamma}(Q_{1/2}^+)}
\le N \|Du\|_{L_2(Q_1^+)}
\end{equation}
and
\begin{equation}
                        \label{eq8.16}
\sqrt\lambda\|u x_d^{-\alpha/2}\|_{C^{1,1-\alpha/2}(Q_{1/2}^+)}\le N \|Du\|_{L_2(Q_1^+)},
\end{equation}
where $N=N(d,\nu,\alpha)>0$, $\gamma=\min\{2-\alpha,1\}$, and $U(z)=\overline{a}_{dj}(x_d)D_j u(z)$ for $z = (z', x_d) \in Q_1^+$.
\end{proposition}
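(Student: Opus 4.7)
My plan would exploit that $\sL_0$, having coefficients depending only on $x_d$, commutes with $\partial_t$ and $D_{x'}$ and preserves the boundary condition on $\{x_d = 0\}$. Thus each tangential derivative $v = \partial_t^i D_{x'}^j u$ is again a weak solution of $\sL_0 v = 0$ in $Q_1^+$ with $v = 0$ on $\{x_d = 0\}$, and I would apply Corollary \ref{cor1} and Lemma \ref{lem1} alternately on a nested sequence $Q_1^+ \supset Q_{r_1}^+ \supset \cdots \supset Q_{3/4}^+$ to conclude by induction that
\[
\|\partial_t^i D_{x'}^j u\|_{C^{0, 1/2}(Q_{3/4}^+)} \le N_{i,j}\, \|Du\|_{L_2(Q_1^+)}
\]
for every $i, j \ge 0$ (invoking Hardy/Poincar\'e, valid since $u|_{x_d = 0} = 0$, to replace $\|u\|_{L_2}$ by $\|Du\|_{L_2}$). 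This controls every pure tangential derivative of $u$ in $L_\infty(Q_{3/4}^+)$, and since each such derivative vanishes at $\{x_d = 0\}$ and is $1/2$-H\"older in space, it also yields the preliminary decay $|\partial_t^i D_{x'}^j u(t, x)| \le N x_d^{1/2}$ near the boundary.

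Next, I would isolate the normal behavior via the conormal quantity $U = \overline{a}_{dj} D_j u$. Splitting the divergence in $\sL_0 u = 0$ at $i = d$ and using the identity $\overline{a}_{dd} D_d u = U - \sum_{j < d} \overline{a}_{dj} D_{x'_j} u$ (together with its tangential derivative) to eliminate $D_d u$ and $D_{x'_i} D_d u$ in favor of $U$, $D_{x'_i} U$, and tangential second derivatives of $u$, I would derive an identity of the schematic form
\[
D_d U = \mu(x_d)^{-1}\bigl(u_t + \lambda\, \overline{c}_0\, u\bigr) + G,
\]
with $G \in L_\infty(Q_{3/4}^+)$ by the previous paragraph. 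The main task is then to upgrade the preliminary decay $|u| + |u_t| \le N x_d^{1/2}$ to the linear decay $|u| + |u_t| \le N x_d$: this gives $|D_d U| \le N x_d^{1 - \alpha}$ and, by integration in $x_d$ starting from a reference value at $x_d \sim 1$ (where $U$ is controlled by the standard non-degenerate Schauder estimate after an anisotropic rescaling), yields $U \in C^{0, \gamma}(Q_{1/2}^+)$ with $\gamma = \min\{2 - \alpha, 1\}$, exactly the H\"older modulus of the antiderivative of $x_d^{1-\alpha}$. I expect this bootstrap to be the main obstacle: for $\alpha$ close to $2$ the na\"ive bound $\mu^{-1}u \sim x_d^{\beta - \alpha}$ is only borderline integrable, so several passes would be needed, combining the weighted $L_2$ bounds on $\partial_t^i D_{x'}^j D_d u$ coming from iterating Lemma \ref{lem1} with weighted Sobolev embeddings to successively improve the decay exponent $\beta$ up to $1$. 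The same analysis applied to $\partial_t u$ (itself a solution of the homogeneous problem) would handle $\partial_t U$.

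Once $U$ and $\partial_t U$ lie in $C^{0, \gamma}$, the algebraic identity $D_d u = \overline{a}_{dd}^{-1}\bigl(U - \sum_{j<d} \overline{a}_{dj} D_{x'_j} u\bigr)$ immediately gives $D_d u, \partial_t D_d u \in L_\infty$, proving $\|u\|_{C^{1,1}(Q_{1/2}^+)}$; applying the same construction to $D_{x'} u$ yields $\|D_{x'} u\|_{C^{1,1}(Q_{1/2}^+)}$ and hence the first inequality. For the second inequality, with $D_d u$ bounded and admitting a trace at $\{x_d = 0\}$ (inherited from the H\"older regularity of $U$), I would write $u(t, x', x_d) = x_d\, \widetilde{d}(t, x', x_d)$ with $\widetilde{d} \in L_\infty$ and Lipschitz in space (via the $C^{0, \gamma}$ bound on $U$); then $ux_d^{-\alpha/2} = x_d^{1 - \alpha/2}\, \widetilde{d}$, and the $(1 - \alpha/2)$-H\"older bound in space follows from the H\"older modulus of $s \mapsto s^{1 - \alpha/2}$ combined with the space-Lipschitz regularity of $\widetilde{d}$; the $\partial_t$-part is handled identically by running the argument on $\partial_t u$. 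The factor $\sqrt{\lambda}$ enters naturally because the quantity $\sqrt{\lambda}\, \|u x_d^{-\alpha/2}\|$ already appears on the left-hand side of the basic energy estimate of Lemma \ref{lem1}, scaled versions of which are invoked throughout the bootstrap.
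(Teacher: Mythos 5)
Your overall strategy tracks the paper's closely: both of you isolate the conormal quantity $U = \overline{a}_{dj}D_j u$, exploit that tangential derivatives $\partial_t^i D_{x'}^j u$ solve the same problem, and use the identity \eqref{eq8.50} for $D_d U$ as the engine. However, there is a real gap in your claim that ``$G \in L_\infty(Q_{3/4}^+)$ by the previous paragraph.'' After your proposed substitution, $G$ contains not only tangential second derivatives $D_{x'}^2 u$ (which the first paragraph does control) but also $D_{x'}U = \overline{a}_{dj}D_j(D_{x'}u)$, which involves $D_d D_{x'}u$ and is therefore \emph{not} a pure tangential derivative of $u$. Controlling $D_{x'}U$ in $L_\infty$ amounts to running the very same argument for $D_{x'}u$ in place of $u$, which requires the very $L_\infty$ bound on $U$ you are trying to establish -- a circularity. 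The paper avoids this by never passing through $L_\infty$ for $G$: it keeps all quantities in weighted $L_2$ (and later weighted $L_{p_k}$) via \eqref{eq8.11}, \eqref{eq8.11b}, \eqref{eq9.00}, and upgrades to $L_\infty$ only at the end by a Sobolev-type embedding, after a finite bootstrap on the pair $(p_k,\beta_k)$. Your ``pointwise decay'' framing of the bootstrap (improving the exponent in $|u|\le Nx_d^\beta$) is plausible morally, but you do not exhibit a working scheme; in particular the weighted Hardy inequality needed at each pass only holds under the arithmetic constraint $(\beta_k+1)/p_{k+1}<1$, which must be verified to avoid borderline failures.

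The final step also has an error. You claim $\widetilde d := u/x_d$ is ``Lipschitz in space (via the $C^{0,\gamma}$ bound on $U$).'' But $D_d u = \overline{a}_{dd}^{-1}\bigl(U - \sum_{j<d}\overline{a}_{dj}D_j u\bigr)$ and $\overline{a}_{dd}^{-1},\overline{a}_{dj}$ are only bounded measurable in $x_d$, so $D_d u$ inherits no H\"older regularity in $x_d$ from $U$; it is merely bounded. Consequently $\widetilde d(x_d)=\fint_0^{x_d}D_d u\,ds$ is bounded but generically not Lipschitz in $x_d$ (its modulus of continuity degrades like $|x_d-y_d|/x_d$). The paper's proof of \eqref{eq8.16} sidesteps this: it splits into the cases $|x_d-y_d|>y_d/4$ (handled by the triangle inequality and the linear decay $|u|\le \|Du\|_{L_\infty}x_d$) and $|x_d-y_d|\le y_d/4$ (handled by the mean value theorem applied directly to $u\,x_d^{-\alpha/2}$, where now $x_d\sim y_d$). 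You would need to replace your factorization argument with this kind of case analysis.
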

\begin{proof}
Let $\beta=2(\alpha-1)_+\in [0,2)$. Using \eqref{eq8.11}, we have
\begin{equation}
                \label{eq8.11b}
\int_{Q_{1/2}^+}|\partial_t^k D_{x'}^j U|^2 \,dz
\le N(d,\nu,\alpha,k,j) \int_{Q_{1}^+} u^2 \,dz
\end{equation}
for any integers $k,j\ge 0$.
From the equation \eqref{eq6.12h},
\begin{equation}
                            \label{eq8.50}
D_d U=\mu(x_d)^{-1}(u_t+\lambda \overline{c}_0 u)-\sum_{i=1}^{d-1}\sum_{j=1}^{d}\overline{a}_{ij}(x_d)D_{ij}u.
\end{equation}
Therefore, for $r\in (1/2,1)$,
\begin{align*}
&\int_{Q_r^+}|D_d U|^2 x_d^\beta\,dz
\le N \int_{Q_r^+} (|u_t|+\lambda |u|)^2 x_d^{-2\alpha+\beta}
+|DD_{x'}u|^2 x_d^{\beta}\,dz\\
&\le N \int_{Q_r^+} (|u_t|+\lambda |u|)^2 x_d^{-2}
+|DD_{x'}u|^2 \,dz\\
&\le N \int_{Q_r^+} |D_d u_t|^2 +\lambda^2 |D_du|^2+|DD_{x'}u|^2 \,dz
\le N \int_{Q_1^+} |u|^2 \,dz,
\end{align*}
where we used Hardy's inequality in the third inequality, and \eqref{eq6.31} and \eqref{eq8.11} in the last inequality.
Since $\partial_t^k D_{x'}^j u$ satisfies the same equation with the same boundary condition, similarly we have
\begin{equation}
                    \label{eq9.00}
\int_{Q_r^+}|\partial_t^k D_{x'}^j D_d U|^2 x_d^\beta\,dz\le N(d,\nu,\alpha,k,j,r)\int_{Q_1^+} |u|^2 \,dz
\end{equation}
for any integers $k,j\ge 0$ and $r\in (1/2,1)$. Now if $\alpha<3/2$ so that $\beta<1$, by  \eqref{eq9.00} and H\"older's inequality,
$$
\int_{Q_r^+}|\partial_t^k D_{x'}^j D_d U| \,dz\le N(d,\nu,\alpha,k,j,r)\Big(\int_{Q_1^+} |u|^2 \,dz\Big)^{1/2},
$$
which, together with \eqref{eq8.11b} and the Sobolev embedding theorem, implies that
\begin{equation}
                            \label{eq12.20}
\|U\|_{L_\infty(Q_r^+)}\le N\|u\|_{L_2(Q_1^+)}.
\end{equation}
Using the definition of $U$, \eqref{eq7.37}, \eqref{eq12.20}, and the Poincar\'e inequality, we get
\begin{equation}
                        \label{eq8.10a}
\|u\|_{C^{1,1}(Q_{1/2}^+)}
\le N \|u\|_{L_2(Q_1^+)}\le N \|D_d u\|_{L_2(Q_1^+)}.
\end{equation}

If $\alpha\in [3/2,2)$, we employ a bootstrap argument.
By the (weighted) Sobolev embedding (see, for instance, \cite[Theorem 6]{Ha} or \cite[Lemma 3.1]{DP20}) in the $x_d$-variable and the standard Sobolev embedding in the other variables, we get from \eqref{eq9.00} that for any $p_1\in (2,\infty)$ satisfying $1/p_1>1/2-1/(1+\beta)$,
\begin{equation}
                    \label{eq10.25}
\|U\|_{L_{p_1}(Q_r^+,x_d^\beta\,dz)}\le N\|u\|_{L_2(Q_1^+)}.
\end{equation}
Using the definition of $U$, \eqref{eq7.37}, and \eqref{eq10.25}, we get
\begin{equation}
                    \label{eq10.28}
\|Du\|_{L_{p_1}(Q_r^+,x_d^\beta\,dz)}\le N\|u\|_{L_2(Q_1^+)}.
\end{equation}
As before, since $\partial_t^k D_{x'}^j u$ satisfies the same equation, from \eqref{eq10.28} and \eqref{eq8.11}, we obtain
\begin{equation}
                    \label{eq10.28b}
\|\partial_t^k D_{x'}^j Du\|_{L_{p_1}(Q_r^+,x_d^\beta\,dz)}\le N\|u\|_{L_2(Q_1^+)}.
\end{equation}
Since $\beta<2$, we may take $p_1\ge 6$.
Let $\beta_1:=\beta+(\alpha-1)p_1=(\alpha-1)(2+p_1)>\beta$.
 Using \eqref{eq8.50} again, we have
\begin{align}
                \label{eq5.26}
&\int_{Q_r^+}|D_d U|^{p_1} x_d^{\beta_1}\,dz
\le N \int_{Q_r^+} (|u_t|+\lambda |u|)^{p_1} x_d^{-p_1\alpha+\beta_1}
+|DD_{x'}u|^{p_1} x_d^{\beta_1}\,dz\notag\\
&\le N \int_{Q_r^+} ((|u_t|+\lambda |u|)/x_d)^{p_1} x_d^{\beta}
+|DD_{x'}u|^{p_1} x_d^{\beta}\,dz\notag\\
&\le N \int_{Q_r^+} (|D_d u_t| +\lambda |D_du|)^{p_1}x_d^{\beta}+|DD_{x'}u|^{p_1}x_d^\beta \,dz,
\end{align}
where we used the weighted Hardy inequality, which holds true because
$$
(\beta+1)/p_1<3/6<1.
$$
Since $u_t$ and $D_{x'}u$ satisfy the same equation as $u$, by \eqref{eq5.26}, \eqref{eq10.28b}, \eqref{eq6.31}, and \eqref{eq7.28}, we further obtain
$$
\int_{Q_r^+}|D_d U|^{p_1} x_d^{\beta_1}\,dz\le N\Big(\int_{Q_1^+} |u|^2 \,dz\Big)^{p_1/2}.
$$
Similar to \eqref{eq9.00}, from the above inequality we deduce
\begin{equation}
                                \label{eq12.59}
\int_{Q_r^+}|\partial_t^k D_{x'}^j D_d U|^{p_1} x_d^{\beta_1}\,dz\le N\Big(\int_{Q_1^+} |u|^2 \,dz\Big)^{p_1/2}
\end{equation}
for any integers $k,j\ge 0$ and $r\in (1/2,1)$.
Now if $p_1>\beta_1+1$, as before we conclude \eqref{eq12.20} and thus \eqref{eq8.10a} by using \eqref{eq12.59} and  H\"older's inequality. Otherwise, we find $p_2\in (p_1,\infty)$ such that $1/p_2=1/p_1-1/(1+\beta_1)+\varepsilon_1$, where $\varepsilon_1>0$ is a sufficiently small number to be chosen later, and let $\beta_2=\beta_1+(\alpha-1)p_2$.
We repeat this procedure and define $p_k$ and $\beta_k$ recursively for $k\ge 3$ by
$$
1/p_k=1/p_{k-1}-1/(1+\beta_{k-1})+\varepsilon_{k-1},\quad \beta_k=\beta_{k-1}+(\alpha-1)p_k,
$$
where $\varepsilon_k>0$ is a sufficiently small number to be chosen later, until $p_k>\beta_k+1$ for some $k$.
Since
\begin{align*}
1-(\beta_{k+1}+1)/p_{k+1}&=2-\alpha-(\beta_k+1)/p_{k+1}\\
&=2-\alpha
+1-(\beta_k+1)/p_{k}-(\beta_k+1)\varepsilon_k
\end{align*}
and $\alpha<2$, the procedure indeed stops in finite steps, i.e.,
$$
1-(\beta_{k}+1)/p_{k}>0
$$
for a finite $k \in \mathbb N$ provided that $\varepsilon_k\le (2-\alpha)/(2(\beta_k+1))$.
Note that to apply the weighted Hardy inequality in each step, we require
$$
(\beta_k+1)/p_{k+1}<1,
$$
which is guaranteed because
\begin{align*}
(\beta_k+1)/p_{k+1}&=(\beta_k+1)/p_{k}-1+(\beta_k+1)\varepsilon_k\\
&=(\beta_{k-1}+1)/p_{k}+\alpha-2+(\beta_k+1)\varepsilon_k\\
&< (\beta_{k-1}+1)/p_{k} <1/2<1.
\end{align*}
Therefore, \eqref{eq12.20} and thus \eqref{eq8.10a} hold for any $\alpha\in (0,2)$.

Next, since $D_{x'}u$ and $u_t$ satisfy the same equation as $u$, from \eqref{eq8.10a} and  \eqref{eq8.11}, we get
\begin{equation}
                        \label{eq8.10b}
\|D_{x'}u\|_{C^{1,1}(Q_{1/2}^+)}
\le N \|D_{x'}u\|_{L_2(Q_1^+)},\quad
\|u_t\|_{C^{1,1}(Q_{1/2}^+)}
\le N \|u\|_{L_2(Q_1^+)}.
\end{equation}
Since
$$
U_t=\overline{a}_{dj}(x_d)D_j u_t,\quad D_{x'}U=\overline{a}_{dj}(x_d)D_j D_{x'}u,
$$
using \eqref{eq8.10b} and the Poincar\'e inequality,
we get
\begin{equation}
                    \label{eq2.10}
\|U_t\|_{L_\infty(Q_{1/2}^+)}+\|D_{x'}U\|_{L_\infty(Q_{1/2}^+)}
\le N \|Du\|_{L_2(Q_1^+)}.
\end{equation}
Furthermore, in view of \eqref{eq8.50}, \eqref{eq8.10a}, \eqref{eq8.10b}, \eqref{eq6.31}, and the zero Dirichlet boundary condition, we have
\begin{equation}
                    \label{eq2.23}
\|D_dU\|_{L_\infty(Q_{1/2}^+)}
\le N \|Du\|_{L_2(Q_1^+)}
\end{equation}
when $\alpha\in (0,1]$. When $\alpha\in (1,2)$,
\begin{equation*}
|D_dU| \le N \|Du\|_{L_2(Q_1^+)}x_d^{1-\alpha}\quad\text{in}\,\,Q_{1/2}^+,
\end{equation*}
which implies that
\begin{align}
                            \label{eq2.29}
|U(t,x',x_d)-U(t,x',y_d)|&\le N\|Du\|_{L_2(Q_1^+)}|x_d^{2-\alpha}-y_d^{2-\alpha}|\notag\\
&\le N\|Du\|_{L_2(Q_1^+)}|x_d-y_d|^{2-\alpha}
\end{align}
for any $(t,x',x_d),(t,x',y_d)\in Q_{1/2}^+$. Combining \eqref{eq8.10a}, \eqref{eq8.10b}, \eqref{eq2.10}, \eqref{eq2.23}, and \eqref{eq2.29} gives \eqref{eq8.10}.

Finally, we show \eqref{eq8.16}. In view of \eqref{eq8.10b} and because $\alpha<2$, it suffices to bound the H\"older semi-norm of $\sqrt\lambda ux_d^{-\alpha/2}$ in $x_d$. For any $(t,x',x_d),(t,x',y_d)\in Q_{1/2}^+$, let
$$
I:=\sqrt \lambda|u(t,x',x_d) x_d^{-\alpha/2}-u(t,x',y_d) y_d^{-\alpha/2}|.
$$
Without loss of generality, we may assume that $0\le x_d<y_d\le 1/2$. When $|x_d-y_d|> |y_d|/4$, by \eqref{eq8.10} and \eqref{eq6.31} we have,
\begin{align*}
             %   \label{eq12.02}
I&\le \sqrt \lambda \|Du\|_{L_\infty(Q_{1/2}^+)}(x_d^{1-\alpha/2}+y_d^{1-\alpha/2})\notag\\
&\le N\|u\|_{L_2(Q_{1}^+)}y_d^{1-\alpha/2}
\le N\|Du\|_{L_2(Q_{1}^+)}|x_d-y_d|^{1-\alpha/2},
\end{align*}
where in the last inequality we used the Poincar\'e inequality.
When $|x_d-y_d|\le |y_d|/4$, we have $x_d\in [3y_d/4,y_d)$.
By the mean value theorem, \eqref{eq8.10}, and \eqref{eq6.31}, there exists $s \in (x_d,y_d)$ such that
\begin{align*}
             %   \label{eq12.02b}
I&= \sqrt \lambda |x_d-y_d| | D_du(t,x',s) s^{-\alpha/2}  - (\alpha/2) u(t,x',s) s^{-1-\alpha/2}|\\
&\le N \sqrt \lambda |x_d-y_d|\|Du\|_{L_\infty(Q_{1/2}^+)}x_d^{-\alpha/2}
\le N\|Du\|_{L_2(Q_{1}^+)}|x_d-y_d|^{1-\alpha/2}.
\end{align*}
This completes the proof of \eqref{eq8.16}.
The proposition is proved.
\end{proof}

\subsection{Interior H\"older estimates for homogeneous equations}  \label{int-schauder}
We fix a point $z_0 = (t_0, x_0) \in \Omega_T$, where $x_0 = (x_0', x_{0d}) \in \bR^{d-1} \times \bR_+$.
Suppose that $\rho \in (0, x_{0d})$, and $\beta \in (0,1)$, we  define the  weighted $\beta$-H\"{o}lder semi-norm of a function $u$ on $Q_\rho(z_0)$ by
\[
\begin{split}
\llbracket u\rrbracket_{C^{\beta/2, \beta}_{\alpha}(Q_\rho(z_0))} = \sup \Big\{ & \frac{|u(s,x) - u(t, y)|}{\big(x_{0d}^{-\alpha/2}|x-y| + |t-s|^{1/2}\big)^{\beta}}: (s,x) \not=(t,y) \\
& \text{and } (s,x), (t,y) \in Q_\rho(z_0)  \Big\}.
\end{split}
\]
As usual, we denote the corresponding weighted norm by
\[
\|u\|_{C^{\beta/2,  \beta}_{\alpha}(Q_\rho(z_0))} = \|u\|_{L_\infty(Q_\rho(z_0))} + \llbracket u\rrbracket_{C^{\beta/2, \beta}_{\alpha}(Q_\rho(z_0))}.
\]
The following result on the interior H\"older estimates of solutions to the homogeneous equation \eqref{x-d.eqn} is needed in the paper.

\begin{proposition}\label{inter-pointwise-est}
Let $z_0 = (t_0, x_0) \in \Omega_T$ and $\rho \in (0, x_{0d}/4)$,  where $x_0 = (x_0', x_{0d}) \in \bR^{d-1} \times \bR_+$.
Suppose that \eqref{c-mu.cond},  \eqref{a-c.cond}, and \eqref{elli-cond} are satisfied on $$
(x_{0d}-r(2\rho, x_{0d}), x_{0d}+r(2\rho, x_{0d})).
$$
If $u \in \sH_{2}^1(Q_{2\rho}(z_0))$ is a weak solution of
\[
\sL_0 u =0 \quad \text{in} \quad Q_{2\rho}(z_0),
\]
then we have
\[
\begin{split}
&  \|x_d^{-\alpha/2} u\|_{L_\infty(Q_{\rho}(z_0))} +   \rho ^{(1-\alpha/2)/2} \llbracket x_d^{-\alpha/2} u\rrbracket_{C^{1/4, 1/2}_\alpha(Q_{\rho}(z_0))} \\
 &  \leq N \left(\fint_{ Q_{2\rho}( z_0)} |x_{d}^{-\alpha/2}u|^{2}dz \right)^{1/2}
\end{split}
\]
and
\[
\begin{split}
& \|D_{x'}u\|_{L_\infty(Q_{\rho}(z_0))} + \|U\|_{L_{\infty}(Q_{\rho}(z_0))} \\
& + \rho^{(1-\alpha/2)/2} \big(\llbracket D_{x'}u\rrbracket_{C^{1/4, 1/2}_\alpha(Q_{\rho}(z_0))} + \llbracket U\rrbracket_{C^{1/4, 1/2}_{\alpha}(Q_{\rho}(z_0))}\big) \\
& \leq N \left(\fint_{ Q_{2\rho}( z_0)} |Du|^{2}dz \right)^{1/2},
\end{split}
\]
where $N = N(\nu, d,\alpha)>0$ and $U = \overline{a}_{di}(x_d)D_iu$.
\end{proposition}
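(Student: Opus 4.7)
The plan is to exploit that for $\rho\in (0,x_{0d}/4)$ the cylinder $Q_{2\rho}(z_0)$ sits strictly inside $\{x_d>0\}$, in fact with $x_d$ comparable to $x_{0d}$ throughout. A parabolic rescaling therefore converts the degenerate equation into a uniformly parabolic one in divergence form on a unit cylinder, to which the interior H\"older estimates of \cite{Dong-Kim11} (which avoid De Giorgi--Nash--Moser and so extend to systems) apply.

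Concretely, set $R=r(2\rho,x_{0d})=x_{0d}^{\alpha/2}(2\rho)^{1-\alpha/2}$ and $T=(2\rho)^{2-\alpha}$, and define $v(\tilde t,\tilde x)=u(t_0+T\tilde t,\,x_0+R\tilde x)$ on $Q_1^*:=(-1,0)\times B_1$, which corresponds to $Q_{2\rho}(z_0)$. Using $TR^{-2}=x_{0d}^{-\alpha}$, one checks that $v$ satisfies
$$v_{\tilde t}+T\lambda\,\tilde c_0\, v-D_{\tilde i}\bigl(\tilde A_{ij}(\tilde x_d)D_{\tilde j}v\bigr)=0\quad\text{in}\,\,Q_1^*,$$
where $\tilde A_{ij}(\tilde x_d)=x_{0d}^{-\alpha}\mu(x_{0d}+R\tilde x_d)\bar a_{ij}(x_{0d}+R\tilde x_d)$ and $\tilde c_0(\tilde x_d)=\bar c_0(x_{0d}+R\tilde x_d)$. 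Since $R<x_{0d}$, the $x_d$-coordinate on $Q_1^*$ lies in $[c_1x_{0d},c_2x_{0d}]$ for some $0<c_1<c_2$ depending only on $\alpha$, so $\tilde A_{ij}$ is uniformly elliptic and bounded with constants depending only on $\nu,\alpha$, and $T\lambda\tilde c_0\ge 0$.

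Applying the interior H\"older estimates of \cite{Dong-Kim11} to this rescaled equation yields
$$\|v\|_{L_\infty(Q_{1/2}^*)}+[v]_{C^{1/4,1/2}_{\mathrm{par}}(Q_{1/2}^*)}\le N\|v\|_{L_2(Q_1^*)}.$$
Because $\tilde A_{ij}$ and $\tilde c_0$ depend only on $\tilde x_d$ (not on $\tilde t$ or $\tilde x'$), both $D_{\tilde x'}v$ and $v_{\tilde t}$ solve the same equation and obey the same estimate. For the conormal derivative $\tilde U=\bar a_{dj}D_{\tilde j}v$, the divergence form of the equation yields the identity
$$D_{\tilde d}\bigl(x_{0d}^{-\alpha}\mu\,\bar a_{dj}D_{\tilde j}v\bigr)
=v_{\tilde t}+T\lambda \tilde c_0 v-\sum_{i=1}^{d-1}D_{\tilde i}\bigl(\tilde A_{ij}D_{\tilde j}v\bigr),$$
whose right-hand side has the required regularity by the estimates just established; integrating in $\tilde x_d$ and using that $\mu$ depends only on $\tilde x_d$ produces the $L_\infty$ and $C^{1/4,1/2}_{\mathrm{par}}$ bounds on $\tilde U$ in terms of $\|Dv\|_{L_2(Q_1^*)}$.

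Finally, under $(t,x)=(t_0+T\tilde t,\,x_0+R\tilde x)$ a direct computation using $R=x_{0d}^{\alpha/2}(2\rho)^{1-\alpha/2}$ and $T^{1/2}=(2\rho)^{1-\alpha/2}$ shows that the $C^{1/4,1/2}_{\mathrm{par}}$ semi-norm in $(\tilde t,\tilde x)$ translates to $\rho^{-(1-\alpha/2)/2}[\cdot]_{C^{1/4,1/2}_\alpha}$ in $(t,x)$, matching the prefactor in the statement; similarly $\|v\|_{L_2(Q_1^*)}$ and $\|D_{\tilde x'}v\|_{L_2(Q_1^*)}$ unscale to multiples of the averaged $L_2$-norms of $u$ and $Du$ on $Q_{2\rho}(z_0)$. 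Since $x_d\sim x_{0d}$ on $Q_{2\rho}(z_0)$, the weight $x_d^{-\alpha/2}$ differs from the constant $x_{0d}^{-\alpha/2}$ by a bounded multiple (with product-rule corrections absorbed by the $L_\infty$ bound on $u$), converting the estimate for $u$ into the claimed estimate for $x_d^{-\alpha/2}u$; combining with the estimates for $D_{\tilde x'}v$ and $\tilde U$ yields the second inequality. The principal obstacle is the H\"older bound on $U$: bounds on $v$ and $D_{x'}v$ follow directly from \cite{Dong-Kim11}, whereas for $U$ one crucially uses that the coefficients depend only on $x_d$, so that the divergence-form equation furnishes a usable pointwise identity for $D_{\tilde d}\tilde U$ in terms of already-controlled quantities.
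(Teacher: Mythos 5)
Your strategy is the same as the paper's: rescale so that $Q_{2\rho}(z_0)$ becomes a unit parabolic cylinder where the equation is uniformly nondegenerate, invoke the interior H\"older estimates of \cite{Dong-Kim11}, exploit that $D_{x'}u$, $u_t$, and the conormal derivative satisfy the same structure, and scale back (with the weight $x_d^{-\alpha/2}$ controlled by $x_{0d}^{-\alpha/2}$). However, there is a genuine error in the rescaled equation you write, and it matters for the estimate on $U$.

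The weak formulation of $\sL_0 u=0$ (Definition \ref{weak-sol-def}) is
$\int \mu^{-1}(-u\partial_t\varphi+\lambda\bar c_0 u\varphi)+\bar a_{ij}D_j u\,D_i\varphi=0$;
the second-order coefficients that sit inside the divergence are $\bar a_{ij}$, not $\mu\bar a_{ij}$. After your change of variables this becomes
$\tilde\mu\,v_{\tilde t}+T\lambda\,\tilde\mu\,\bar c_0\,v-D_{\tilde i}(\bar a_{ij}(x_{0d}+R\tilde x_d)D_{\tilde j}v)=0$
with $\tilde\mu=x_{0d}^{\alpha}\mu(x_{0d}+R\tilde x_d)^{-1}\sim 1$, which is what the paper uses (equation \eqref{v-heat-eqn}). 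Your version instead moves $\mu$ inside the divergence, writing $\tilde A_{ij}=x_{0d}^{-\alpha}\mu\,\bar a_{ij}$ with coefficient $1$ on $v_{\tilde t}$. Since $\mu$ depends on $x_d$ (and is only assumed bounded and measurable), $\mu D_i(\bar a_{ij}D_ju)\neq D_i(\mu\bar a_{ij}D_ju)$, so these are two different PDEs. The consequence is concrete: the conormal derivative attached to your equation is $x_{0d}^{-\alpha}\mu\,\bar a_{dj}D_{\tilde j}v$ rather than $\bar a_{dj}D_{\tilde j}v$, and the interior estimates (or your pointwise identity for $D_{\tilde d}$ of the conormal) give H\"older control only of the former. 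You cannot divide out the factor $x_{0d}^{-\alpha}\mu(\cdot)$ to recover H\"older control of $\tilde U=\bar a_{dj}D_{\tilde j}v$, because $\mu$ is merely measurable (condition \eqref{c-mu.cond} only asserts $\mu(x_d)/x_d^\alpha$ is bounded between $\nu$ and $\nu^{-1}$). The remark "using that $\mu$ depends only on $\tilde x_d$" does not close this gap. The fix is straightforward: use the correct weak form with $\tilde\mu$ multiplying the time-derivative and $\bar a_{ij}$ inside the divergence, so that the conormal quantity you control really is $\tilde U$; this is what the paper's proof does, citing (an adaptation of) \cite[Lemma 3.5]{Dong-Kim11} directly for $D_{x'}v$ and $V=\tilde a_{dj}D_jv$.

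Aside from this, the rest of your argument — the comparability $x_d\sim x_{0d}$ on $Q_{2\rho}(z_0)$, the translation of the $C^{1/4,1/2}$ semi-norm under the anisotropic scaling producing the $\rho^{(1-\alpha/2)/2}$ prefactor, and the triangle-inequality passage from $u$ to $x_d^{-\alpha/2}u$ — matches what the paper does and is fine.
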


\begin{proof}
By \eqref{Q.def} and as $4\rho < x_{0d}$, we have
\[
r(2\rho,x_{0d}) = \max\{2\rho, x_{0d}\}^{\alpha/2} (2\rho)^{1-\alpha/2} = (2\rho)^{1-\alpha/2} x_{0d}^{\alpha/2}
\]
and
\[
Q_{2\rho}(z_0) = (t_0 - (2\rho)^{2-\alpha} , t_0) \times B_{(2\rho)^{1-\alpha/2} x_{0d}^{\alpha/2}}(x_0).
\]
Let us denote the standard parabolic cylinder centered at $z_0$ with radius
$\rho$ by
\[ \tilde{Q}_\rho(z_0) = (t_0 -\rho^2, t_0) \times B_\rho(x_0) \quad \text{and} \quad \tilde{Q}_\rho = \tilde{Q}_\rho(0). \]
Also, let
\[ v(t,x) = u(\rho^{2-\alpha}t + t_0,  \rho^{1-\alpha/2} x_{0d}^{\alpha/2}  x + x_0), \quad (t,x) \in \tilde{Q}_{2}. \]
We then see that $v$ is a weak solution of
\begin{equation} \label{v-heat-eqn}
\tilde{\mu}(x_d) v_t + \lambda \rho^{2-\alpha}\tilde{c}_0(x_d) v  -  D_i (\tilde{a}_{ij}(x_d) D_j v) =0 \quad \text{in} \quad \tilde{Q}_{2},
\end{equation}
where
\[
\begin{split}
& \tilde{a}_{ij}(x_d) = a_{ij}(\rho^{1-\alpha/2} x_{0d}^{\alpha/2}  x_d + x_{0d}), \\
& \tilde{c}_0(x_d) = x_{0d}^{\alpha} \, \overline{c}_0(\rho^{1-\alpha/2} x_{0d}^{\alpha/2}  x_d + x_{0d})\big[\mu(\rho^{1-\alpha/2} x_{0d}^{\alpha/2}  x_d + x_{0d})\big]^{-1}, \\
&\tilde{\mu}(x_d) = x_{0d}^{\alpha} \big[\mu(\rho^{1-\alpha/2} x_{0d}^{\alpha/2}  x_d + x_{0d})\big]^{-1}.
\end{split}
\]
Due to this and \eqref{c-mu.cond} and as $\rho/x_{0d} <1/4$, we see that
\[
\mu(\rho^{1-\alpha/2} x_{0d}^{\alpha/2}  x_d + x_{0d}) \sim x_{0d}^{\alpha}[(\rho /x_{0d})^{1-\alpha/2} x_d +1]^{\alpha} \sim x_{0d}^{\alpha} \quad \text{for all} \,\, |x_d| <2.
\]
Therefore, there is a constant $N_0 = N_0(\nu, \alpha)  \in (0,1)$ such that
\[
 N_0 \leq \tilde{\mu}(x_d), \tilde{c}_0(x_d) \leq N_0^{-1}, \quad \forall\, z = (t, x', x_d) \in \tilde{Q}_{2}.
\]
Consequently, the coefficients in \eqref{v-heat-eqn} are uniformly elliptic and bounded  in $\tilde{Q}_{2}$.
Then, adapting the proof of H\"{o}lder estimates  in \cite[Lemma 3.5]{Dong-Kim11}  to \eqref{v-heat-eqn},  we obtain
\[
\begin{split}
\|v\|_{C^{1/4, 1/2}(\tilde{Q}_{1} )} & \leq N \left(\fint_{\tilde{Q}_{2}} |v|^{2}dz \right)^{1/2} = N \left(\fint_{Q_{2\rho}( z_0)} |u|^{2}dz \right)^{1/2} \\
& \leq N  x_{0d}^{\alpha/2} \left(\fint_{Q_{2\rho}( z_0)} |x_d^{-\alpha/2}u|^{2}dz \right)^{1/2},
\end{split}
\]
where in the last step, we use the fact that $x_d \sim x_{0d}$ for all $z= (z', x_d) \in Q_{2\rho}(z_0)$. Now, for $(s,x)$ and $(\tau, y) \in  Q_{1}$ with $(s,x) \not= (\tau, y)$, we have
\[
  \frac{|v(s, x)-v(\tau, y)|}{\big( |x-y| + |s-\tau|^{1/2} \big)^{1/2}}  = \frac{\rho^{(1-\alpha/2)/2} |u(s', \hat{x})-u(\tau', \hat{y})|}{\big( x_{0d}^{-\alpha/2} |\hat{x}-\hat{y}| + |s'-\tau'|^{1/2}\big)^{1/2}},
\]
where
\[
\begin{split}
&\hat{x} = \rho^{1-\alpha/2} x_{0d}^{\alpha/2}  x + x_0, \quad \hat{y} = \rho^{1-\alpha/2} x_{0d}^{\alpha/2}  y + x_0,\\
&s' = \rho^{2-\alpha} s + t_0, \quad \tau' = \rho^{2-\alpha} \tau + t_0,
\end{split}
\]
which implies that
\[
 \rho^{(1-\alpha/2)/2} \llbracket u\rrbracket_{C^{1/4, 1/2}_{\alpha}(Q_{\rho}(z_0))} =  \llbracket v\rrbracket_{C^{1/4, 1/2}(\tilde{Q}_{1} )}.
\]
Therefore,
\begin{equation} \label{u-hold.06}
\begin{split}
& \|u\|_{L_\infty(Q_{\rho}(z_0))} + \rho ^{(1-\alpha/2)/2} \llbracket u\rrbracket_{C^{1/4, 1/2}_{\alpha}(Q_{\rho}(z_0))} \\
& \leq N  x_{0d}^{\alpha/2} \left(\fint_{Q_{2\rho}( z_0)} |x_d^{-\alpha/2}u|^{2}dz \right)^{1/2}.
\end{split}
\end{equation}
Now, for $(t, x), (s,y) \in Q_{2\rho}( z_0)$ with $x=(x',x_d)$ and $y = (y', y_d)$,  by the triangle inequality, we have
\[
\begin{split}
& |x_d^{-\alpha/2}u(t,x) -y_d^{-\alpha/2}u(s,y)|\\
& \leq  |u(t,x) -u(s,y)| \, x_d^{-\alpha/2} + |x_d^{-\alpha/2} -y_{d}^{-\alpha/2}| \, |u(s,y)| \\
& \leq N(\alpha)\, x_{0d}^{-\alpha/2}\,\big( |u(t,x) -u(s,y)| + |x_d-y_d| \, x_{0d}^{-1} \, \|u\|_{L_\infty(Q_{\rho}(z_0))} \big) \\
& \leq Nx_{0d}^{-\alpha/2}\big(x_{0d}^{-\alpha/2} |x-y| +  |t-s|^{1/2}\big)^{1/2} \\
&\quad \cdot\big(\llbracket u\rrbracket_{C^{1/4, 1/2}_{\alpha}(Q_{\rho}(z_0))}+  |x_d-y_d|^{1/2} \, x_{0d}^{\alpha/4-1} \, \|u\|_{L_\infty(Q_{\rho}(z_0))} \big) \\
& \leq Nx_{0d}^{-\alpha/2}\big(x_{0d}^{-\alpha/2}  |x-y| +  |t-s|^{1/2} \big)^{1/2}\\
&\quad \cdot \big(\llbracket u\rrbracket_{C^{1/4, 1/2}_{\alpha}(Q_{\rho}(z_0))} + \rho^{(1-\alpha/2)/2} x_{0d}^{\alpha/2-1} \|u\|_{L_\infty(Q_{\rho}(z_0))}\big),
\end{split}
\]
where we used the fact that $x_d, y_d \sim x_{0d}$ in the second  inequality and $|x_d - y_d| \leq N \rho^{1-\alpha/2} x_{0d}^{\alpha/2}$ in the last inequality.
Therefore, as $\rho/x_{0d} \leq 1/4$ and \eqref{u-hold.06}, we obtain
\[
\begin{split}
& \|x_d^{-\alpha/2} u\|_{L_\infty(Q_{\rho}(z_0))} + \rho^{(1-\alpha/2)/2} \llbracket x_d^{-\alpha/2}u\rrbracket_{C^{1/4, 1/2}_{\alpha}(Q_{\rho}(z_0))} \\
& \leq   N  \left(\fint_{Q_{2\rho}( z_0)} |x_d^{-\alpha/2}u|^{p_0}dz \right)^{1/p_0}
\end{split}
\]
and this proves the first assertion  of the proposition.

Next, we prove the second assertion.
Again,  adapting the proof of  \cite[Lemma 3.5]{Dong-Kim11} to the equation \eqref{v-heat-eqn}, we see  that
\[
 \|D_{x'}v\|_{C^{1/4, 1/2} (\tilde{Q}_{1})} + \|V\|_{C^{1/4, 1/2} (\tilde{Q}_{1})}  \leq N(\nu, d) \left(\fint_{\tilde{Q}_{2}} |Dv|^{2}dz \right)^{1/2},
\]
where $V = \tilde{a}_{dj}(x_d) D_j v$.
Then, by scaling back as before, we obtain the second assertion of the proposition.
The proof is completed.
\end{proof}

\subsection{Mean oscillation estimates and proof of Theorem \ref{sim-eqn-thrm}} \label{oss-est-simple-coffe}

We next prove the following mean oscillation estimates of weak solutions to homogeneous equations.

\begin{lemma} \label{hom-osc-est}
Let $z_0 =(z_0', x_{0d}) \in \overline{\Omega}_T$ and $\rho>0$. Assume that $u \in \sH_2^1(Q_{14 \rho}^+(z_0))$ is a weak solution of
\[
\sL_0 u= 0 \quad \text{in} \quad Q_{14 \rho}^+(z_0)
\]
with the boundary condition $u =0$ on $\{x_d =0\} \cap Q_{14\rho}(z_0)$ if $\{x_d =0\} \cap \overline{Q_{14\rho}(z_0)}$ is not empty.
Then, for every $\kappa \in (0,1)$,
\[ \big(|v - (v)_{Q_{\kappa \rho}^+(z_0)}|\big)_{Q_{\kappa\rho}^+(z_0)} \leq N \kappa^{\gamma_0}\left[\big(|v|^2\big) ^{1/2}_{Q_{14\rho}^+(z_0)} +\big(|Du|^2\big) ^{1/2}_{Q_{14\rho}^+(z_0)}  \right]  \]
for $v = \sqrt{\lambda} x_d^{-\alpha/2} u$, and
\[
\begin{split}
&  \big(|D_{x'}u - (D_{x'}u)_{Q_{\kappa \rho}^+(z_0)}|\big)_{Q_{\kappa\rho}^+(z_0)} + \big(|U - (U)_{Q_{\kappa \rho}^+(z_0)}|\big)_{Q_{\kappa\rho}^+(z_0)} \\
&  \leq N \kappa^{\gamma_0} \big(|Du|^2\big)^{1/2}_{Q_{14\rho}^+(z_0)},
\end{split}
\]
where $\gamma_0 = \min\{1, 2-\alpha\}/4$, $U = \overline{a}_{dj}D_ju$, and $N = N(d, \nu, \alpha)>0$.
\end{lemma}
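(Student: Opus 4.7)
The plan is to derive the mean-oscillation bounds from H\"older regularity estimates (Propositions~\ref{inter-pointwise-est} and \ref{prop3}) via suitable parabolic rescalings. The case $\kappa\ge 1/14$ is trivial: bound the mean oscillation of $f\in\{v,D_{x'}u,U\}$ by $2(|f|)_{Q_{\kappa\rho}^+(z_0)}\le N(|f|^2)^{1/2}_{Q_{14\rho}^+(z_0)}$ and absorb the bounded factor $\kappa^{-\gamma_0}\le 14^{\gamma_0}$ into the constant. For $\kappa<1/14$, I split based on the position of $z_0$ relative to the boundary.

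In the \emph{interior case} $x_{0d}\ge\rho$, I choose $\rho':=\min\{\rho,x_{0d}/8\}<x_{0d}/4$ and apply Proposition~\ref{inter-pointwise-est} on $Q_{2\rho'}(z_0)\subset Q_{14\rho}^+(z_0)$, obtaining
\[
\rho'^{(1-\alpha/2)/2}[f]_{C^{1/4,1/2}_\alpha(Q_{\rho'}(z_0))}\le N(|Du|^2)^{1/2}_{Q_{2\rho'}(z_0)}
\]
for $f\in\{D_{x'}u,U\}$, and analogously with $(|v|^2)^{1/2}_{Q_{2\rho'}(z_0)}$ on the right-hand side for $f=v$. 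For $(t,x),(s,y)\in Q_{\kappa\rho}^+(z_0)$, elementary bounds give $x_{0d}^{-\alpha/2}|x-y|+|t-s|^{1/2}\le N(\kappa\rho)^{1-\alpha/2}$; raising to the power $1/2$ and combining with the above, while absorbing the bounded ratio $\rho/\rho'$ and the associated volume ratio, yields oscillation $\le N\kappa^{(2-\alpha)/4}(|Du|^2)^{1/2}_{Q_{14\rho}^+(z_0)}$, and $(2-\alpha)/4\ge\gamma_0$ gives the claim.

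In the \emph{boundary case} $x_{0d}<\rho$, I rescale at scale $\rho$ centered at the boundary point $z_0^\ast:=(t_0,z_0',0)$ via $w(s,y):=u(\rho^{2-\alpha}s+t_0,\rho y'+z_0',\rho y_d)$; then $w$ solves a homogeneous equation of the same form with $\tilde\lambda=\lambda\rho^{2-\alpha}$, on a region containing $Q_{10}^+(0)$ (this uses $x_{0d}/\rho<1$ and the margin provided by the factor $14$). A rescaled version of Proposition~\ref{prop3} then yields H\"older bounds on $Q_5^+(0)$ for $D_{y'}w$, $W=\tilde a_{dj}D_jw$, and $\sqrt{\tilde\lambda}wy_d^{-\alpha/2}$, with spatial H\"older exponents $1$, $\gamma=\min(1,2-\alpha)$, and $1-\alpha/2$ respectively, and Lipschitz-in-time bounds (all encoded in the $C^{1,\cdot}$-norms). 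The rescaled small cylinder $Q_\kappa^+(\hat z_0)\subset Q_5^+(0)$ (with $\hat z_0=(0,0',x_{0d}/\rho)$) has parabolic dimensions $O(\kappa^{1-\alpha/2})$ in space and $O(\kappa^{2-\alpha})$ in time; combining the spatial H\"older and temporal Lipschitz bounds via the natural parabolic scaling $t\sim x^{2-\alpha}$ produces scaled oscillation $\le N\kappa^{\gamma_0}\|Dw\|_{L_2(Q_{10}^+)}$. Unscaling via $D_{y'}w=\rho D_{x'}u$, $\sqrt{\tilde\lambda}wy_d^{-\alpha/2}=\rho v$, and $\|Dw\|_{L_2(Q_{10}^+)}\sim\rho(|Du|^2)^{1/2}_{Q_{10\rho}^+(z_0^\ast)}\le N\rho(|Du|^2)^{1/2}_{Q_{14\rho}^+(z_0)}$ cancels the $\rho$-factors and yields the desired bound.

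\textbf{Main obstacle.} The most delicate step is the exponent bookkeeping in the boundary case for $\alpha>1$: a naive direct application of Proposition~\ref{prop3} yields the spatial contribution $\kappa^{\beta(1-\alpha/2)}$ (where $\beta$ is the spatial H\"older exponent), which in the \emph{intermediate sub-regime} $\kappa\rho<x_{0d}<\rho$ can fall below $\gamma_0=\min(1,2-\alpha)/4$. The remedy is that in this sub-regime the scaled cylinder $Q_\kappa^+(\hat z_0)$ is \emph{strictly interior} in scaled coordinates (at distance $\hat x_{0d}=x_{0d}/\rho>\kappa$ from $\{y_d=0\}$), so one applies Proposition~\ref{inter-pointwise-est} in the scaled coordinates at an intermediate scale $\hat x_{0d}/8$ to recover the correct exponent; the complementary strict-boundary sub-regime $x_{0d}\le\kappa\rho$ uses the boundary estimate directly and works because the scaled diameter is then $O(\kappa)$ rather than $O(\kappa^{1-\alpha/2})$. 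Verifying that all the required sub-cylinders fit inside $Q_{14\rho}^+(z_0)$ and that the $\sqrt{\tilde\lambda}$ factor correctly produces the $(|v|^2)^{1/2}$ term inherited from the interior case is the remaining technical work.
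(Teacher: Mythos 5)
Your overall strategy mirrors the paper's: split on the relative size of $x_{0d}$ and $\rho$, invoke Proposition \ref{prop3} for the boundary regime and Proposition \ref{inter-pointwise-est} for the interior regime, and translate H\"older bounds into mean-oscillation bounds. The paper does exactly this, distinguishing $x_{0d}\le 4\rho$ (apply Proposition \ref{prop3} after shifting to $\tilde z_0=(z_0',0)$, so that $Q_1^+(z_0)\subset Q_5^+(\tilde z_0)\subset Q_{10}^+(\tilde z_0)\subset Q_{14}^+(z_0)$ after normalizing $\rho=1$) from $x_{0d}>4\rho$ (apply Proposition \ref{inter-pointwise-est}). Your ``main obstacle'' observation is a genuine and important one: when $\kappa\rho<x_{0d}\lesssim\rho$, the spatial diameter of $Q_{\kappa\rho}^+(z_0)$ is of size $x_{0d}^{\alpha/2}(\kappa\rho)^{1-\alpha/2}\sim\rho\,\kappa^{1-\alpha/2}$, not $\rho\,\kappa$, so plugging the $C^{0,\gamma}$ (resp.\ $C^{0,1-\alpha/2}$) H\"older seminorm of $U$ (resp.\ $v$) from Proposition \ref{prop3} into a naive oscillation bound yields exponent $\gamma(1-\alpha/2)$ (resp.\ $(1-\alpha/2)^2$), which falls below $\gamma_0$ for $\alpha>3/2$ (resp.\ $\alpha>1$). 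The paper's one-line assertions $\kappa^\gamma\llbracket U\rrbracket_{C^{0,\gamma}}$ and $\kappa^{1-\alpha/2}\|v\|_{C^{1,1-\alpha/2}}$ in its Case 1 are, as written, imprecise in that regime.

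Your proposed remedy, however, has a gap. In the intermediate regime $\kappa<\hat x_{0d}<1$ (rescaled coordinates), applying Proposition \ref{inter-pointwise-est} at scale $\hat\rho=\hat x_{0d}/8$ gives $\hat\rho^{(2-\alpha)/4}\llbracket f\rrbracket_{C^{1/4,1/2}_\alpha(Q_{\hat\rho}(\hat z_0))}\le N\big(|Df|^2\big)^{1/2}_{Q_{2\hat\rho}(\hat z_0)}$, and the weighted diameter of $Q_\kappa^+(\hat z_0)$ is of size $\kappa^{1-\alpha/2}$, so the oscillation you obtain is $N\,(\kappa/\hat\rho)^{(2-\alpha)/4}\big(|Df|^2\big)^{1/2}_{Q_{2\hat\rho}(\hat z_0)}$. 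The only lower bound on $\hat\rho$ available in this regime is $\hat\rho\gtrsim\kappa$, so $(\kappa/\hat\rho)^{(2-\alpha)/4}$ is merely bounded, with no surviving power of $\kappa$. Nor can one crudely replace $\big(|Df|^2\big)^{1/2}_{Q_{2\hat\rho}}$ with $\big(|Df|^2\big)^{1/2}_{Q_{14}^+}$: the volume ratio $|Q_{14}^+|/|Q_{2\hat\rho}(\hat z_0)|\sim(\hat x_{0d})^{-(d+2-\alpha)}$ blows up. (A secondary issue: even to have $Q_\kappa^+(\hat z_0)\subset Q_{\hat\rho}(\hat z_0)$ you need $\kappa\le\hat x_{0d}/8$, which the intermediate regime $\kappa<\hat x_{0d}$ does not guarantee; the band $\hat x_{0d}/8<\kappa<\hat x_{0d}$ would still have to fall back on the boundary estimate.)

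What actually closes the gap is the sharper form of the pointwise bounds established inside the proof of Proposition \ref{prop3}, not the interior estimate. From \eqref{eq2.10} one has $\|D_{x'}U\|_\infty+\|\partial_t U\|_\infty\le N\|Du\|_{L_2}$, and for $\alpha\in(1,2)$ the display preceding \eqref{eq2.29} gives $|D_dU|\le N\|Du\|_{L_2}\,x_d^{1-\alpha}$, so
\[
|U(t,x',x_d)-U(t,x',y_d)|\le N\|Du\|_{L_2}\bigl|x_d^{2-\alpha}-y_d^{2-\alpha}\bigr|.
\]
When $x_d,y_d\sim x_{0d}\gtrsim r(\kappa,x_{0d})$ this is $\lesssim x_{0d}^{1-\alpha}r(\kappa,x_{0d})=(x_{0d}\kappa)^{1-\alpha/2}\lesssim\kappa^{1-\alpha/2}$; when $x_{0d}\lesssim r(\kappa,x_{0d})$ (so $x_{0d}\lesssim\kappa$) it is $\lesssim\kappa^{2-\alpha}$. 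In both sub-cases the exponent is at least $(2-\alpha)/2\ge\gamma_0$, without any intermediate-scale interior estimate. The analogous sharper bound for $v$ is the one in the final paragraph of the proof of Proposition \ref{prop3}, where the case $|x_d-y_d|\le|y_d|/4$ yields a Lipschitz-type estimate with the weight $x_d^{-\alpha/2}$, again delivering exponent $1-\alpha/2\ge\gamma_0$. This is the content hidden in the paper's short Case 1; if you want to write out a complete proof, you should supply this argument rather than the interior-estimate patch.
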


\begin{proof}
By a scaling argument, without loss of generality, we can assume that $\rho=1$.
We consider two cases.

\medskip

\noindent
{\bf Case 1:} $x_{0d} \leq 4$.
Let $\tilde{z}_0 = (z_0',0)$, and it follows from \eqref{Q.def} that
\[ Q_{1}^+(z_0) \subset Q_{5}^+(\tilde{z}_0) \subset Q_{10}^+(\tilde{z}_0) \subset Q_{14}^+(z_0).
\]
Then, it follows from the mean value theorem and Proposition \ref{prop3} that
\[
\begin{split}
& \big(|D_{x'}u - (D_{x'}u)_{Q_{\kappa}^+(z_0)}|\big)_{Q_{\kappa}^+(z_0)} \\
&   \leq N(d) \kappa \big[ \|DD_{x'}u\|_{L_\infty(Q_{1}^+(z_0))} + \|D_{x'}u_t\|_{L_\infty(Q_{1}^+(z_0))} \big]\\
&  \leq  N  \kappa \|D_{x'}u\|_{C^{1,1}(Q_{5}^+(\tilde{z}_0))} \leq N  \kappa \big(|Du|^2\big)_{Q_{10}^+(\tilde{z}_0)}^{1/2} \\
& \leq N  \kappa \big(|Du|^2\big)_{Q_{14}^+( z_0)}^{1/2}.
\end{split}
\]
Recall that $\gamma = \min\{1, 2-\alpha\}$.
By a similar argument,
\[
\begin{split}
\big(|U - (U)_{Q_{\kappa }^+(z_0)}|\big)_{Q_{\kappa}^+(z_0)}  &\leq N \kappa^{2-\alpha} \|\partial_t U\|_{L_\infty(Q_{1}^+( z_0))} + \kappa^{\gamma} \llbracket U\rrbracket_{C^{0, \gamma} (Q_{1}^+(z_0))}\\
& \leq N \kappa^{\gamma} \big(|Du|^2\big)_{Q_{14}^+( z_0)}^{1/2}.
\end{split}
\]
Finally, we write $v = \sqrt{\lambda}x_d^{-\alpha/2} u$.
Applying the mean value theorem and Proposition \ref{prop3}, we get
\[
\begin{split}
& \big(|v - (v)_{Q_{\kappa }^+(z_0)}|\big)_{Q_{\kappa}^+(z_0)} \leq N\kappa^{1-\alpha/2} \|v\|_{C^{1, 1-\alpha/2}(Q_{5}^+(\tilde{z}_0) )} \\
& \leq N \kappa^{1-\alpha/2} \big(|Du|^2\big)^{1/2}_{Q_{10}^+(\tilde{z}_0)} \leq N \kappa^{1-\alpha/2} \big(|Du|^2\big)^{1/2}_{Q_{14}^+(z_0)}.
\end{split}
\]
Then, the desired inequalities follow as $\kappa \in (0,1)$.

\medskip

\noindent
{\bf Case 2:}  $x_{0d}>4$.
The proof is similar to  Case 1,  instead we apply Proposition \ref{inter-pointwise-est}.
For example, for $v = \sqrt{\lambda}x_d^{-\alpha/2} u$, we have
\[
\begin{split}
& \big(|v - (v)_{Q_{\kappa}^+(z_0)}|\big)_{Q_{\kappa}^+(z_0)}  \leq N\kappa^{1/2 -\alpha/4} \llbracket v\rrbracket_{C^{1/4, 1/2}_{\alpha}(Q_{1}^+(z_0))} \\
& \leq N \kappa^{1/2 -\alpha/4} \left(\fint_{Q_{2}^+(z_0)} |v(z)|^2 dz \right)^{1/2} \leq N \kappa^{1/2 -\alpha/4} \left(\fint_{Q_{14}^+(z_0)} |v(z)|^2 dz \right)^{1/2},
\end{split}
\]
where we used the doubling properties of the measure.
The oscillation estimates of $D_{x'}v$ and $U$ can be proved in the same way.
\end{proof}

Next, we prove the following proposition on the oscillation estimates for weak solution of the non-homogeneous equation \eqref{x-d.eqn}.

\begin{proposition}[Mean oscillation estimates] \label{osc-prop}
Assume that $F \in L_{2,\textup{loc}}(\Omega_T)^d$ and $f= f_1 + f_2$ such that $x_d^{1-\alpha} f_1$ and $x_d^{-\alpha/2} f_2$ are in $L_{2,\textup{loc}}(\Omega_T)$.
If $u \in \sH_{2, \textup{loc}}^1(\Omega_T)$ is a weak solution of \eqref{x-d.eqn}, then for every $z_0 \in \overline{\Omega}_T$,  $\rho \in (0, \infty)$, and $\kappa \in (0,1)$,
\[
\begin{split}
 \big(|v - (v)_{Q_{\kappa \rho}^+(z_0)}| \big)_{Q_{\kappa\rho}^+(z_0)} & \leq  N \kappa^{\gamma_0}\big[ \big(|v|^2\big)_{Q_{14 \rho}^+(z_0)}^{1/2}+ \big(|Du|^2\big)_{Q_{14\rho}^+(z_0)}^{1/2}\big]\\
& + N \kappa^{-\gamma_1} \big[ \big(|F|^2\big)_{Q_{14\rho}^+(z_0)}^{1/2} +   \big(|g|^2\big)_{Q_{14\rho}^+(z_0)}^{1/2} \big]
\end{split}
\]
and
\[
\begin{split}
 \big(|\cU - (\cU)_{Q_{\kappa \rho}^+(z_0)}| \big)_{Q_{\kappa\rho}^+(z_0)} & \leq  N \kappa^{\gamma_0} \big(|Du|^2\big)_{Q_{14\rho}^+(z_0)}^{1/2}\\
& + N \kappa^{-\gamma_1} \big[ \big(|F|^2\big)_{Q_{14\rho}^+(z_0)}^{1/2} +   \big(|g|^2\big)_{Q_{14\rho}^+(z_0)}^{1/2} \big],
\end{split}
\]
where $v = \sqrt{\lambda} x_d^{-\alpha/2}u$, $\cU = (D_{x'}u, U)$ with $U = \overline{a}_{di}(x_d)D_iu$, $g = x_d^{1-\alpha} |f_1|+ \lambda^{-1/2}  x_d^{-\alpha/2} |f_2|$, $\gamma_0 = \min\{1, 2-\alpha\}/ 4$, $\gamma_1 = (d+2 -\alpha)/2$,  and $N = N(d, \nu, \alpha)>0$.
\end{proposition}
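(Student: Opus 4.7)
The plan is to use a standard decomposition: split $u = w + h$, where $w$ is an $L_2$-solution on all of $\Omega_T$ with the inhomogeneous data localized to $Q_{14\rho}^+(z_0)$ and zero Dirichlet condition on $\{x_d = 0\}$, and $h := u - w$ solves the corresponding homogeneous equation on $Q_{14\rho}^+(z_0)$ with the same zero boundary data on $\{x_d=0\}$ (when relevant). Then Lemma \ref{hom-osc-est} controls the oscillations of the relevant quantities associated to $h$, while Theorem \ref{L-2.theorem} controls $w$ globally in $L_2$.

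More concretely, I would first set $\tilde F = F \mathbf{1}_{Q_{14\rho}^+(z_0)}$ and $\tilde f_i = f_i \mathbf{1}_{Q_{14\rho}^+(z_0)}$ for $i=1,2$, and apply Theorem \ref{L-2.theorem} to produce $w \in \sH^1_2(\Omega_T)$ with $\sL_0 w = \mu(x_d) D_i \tilde F_i + \tilde f$ and
\[
\|Dw\|_{L_2(\Omega_T)} + \sqrt\lambda \,\|x_d^{-\alpha/2} w\|_{L_2(\Omega_T)} \le N\bigl[\|F\|_{L_2(Q_{14\rho}^+(z_0))} + \|g\|_{L_2(Q_{14\rho}^+(z_0))}\bigr].
\]
Since $u \in \sH^1_{2,\loc}$ is the given weak solution, $h = u-w$ lies in $\sH^1_2(Q_{14\rho}^+(z_0))$ and solves $\sL_0 h = 0$ in $Q_{14\rho}^+(z_0)$ with $h=0$ on $\{x_d=0\}\cap Q_{14\rho}(z_0)$ when that set is nonempty, so Lemma \ref{hom-osc-est} applies.

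Next, for any quantity $q \in \{\sqrt\lambda x_d^{-\alpha/2} u,\, D_{x'}u,\, U\}$, I would use the triangle inequality
\[
\bigl(|q_u - (q_u)_{Q_{\kappa\rho}^+(z_0)}|\bigr)_{Q_{\kappa\rho}^+(z_0)} \le \bigl(|q_h - (q_h)_{Q_{\kappa\rho}^+(z_0)}|\bigr)_{Q_{\kappa\rho}^+(z_0)} + 2\bigl(|q_w|\bigr)_{Q_{\kappa\rho}^+(z_0)},
\]
bound the first term using Lemma \ref{hom-osc-est} (picking up the $\kappa^{\gamma_0}$), and bound the second term via Hölder's inequality and the doubling property of our cylinders,
\[
\bigl(|q_w|\bigr)_{Q_{\kappa\rho}^+(z_0)} \le \Bigl(\tfrac{|Q_{14\rho}^+(z_0)|}{|Q_{\kappa\rho}^+(z_0)|}\Bigr)^{1/2} \bigl(|q_w|^2\bigr)^{1/2}_{Q_{14\rho}^+(z_0)} \le N\kappa^{-\gamma_1} \bigl(|q_w|^2\bigr)^{1/2}_{Q_{14\rho}^+(z_0)}.
\]
Combining with $q_h = q_u - q_w$ (so that $(|q_h|^2)^{1/2}_{Q_{14\rho}^+} \le (|q_u|^2)^{1/2}_{Q_{14\rho}^+} + (|q_w|^2)^{1/2}_{Q_{14\rho}^+}$), and using the $L_2$-estimate for $w$ rewritten as $(|q_w|^2)^{1/2}_{Q_{14\rho}^+(z_0)} \le N |Q_{14\rho}^+(z_0)|^{-1/2}\bigl[\|F\|_{L_2(Q_{14\rho}^+)} + \|g\|_{L_2(Q_{14\rho}^+)}\bigr] = N[(|F|^2)^{1/2} + (|g|^2)^{1/2}]_{Q_{14\rho}^+(z_0)}$, and $\kappa^{\gamma_0} \le \kappa^{-\gamma_1}$ for $\kappa \in (0,1)$, yields both claimed bounds.

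The main obstacle I anticipate is verifying the doubling exponent $\gamma_1 = (d+2-\alpha)/2$ uniformly in the three geometric regimes determined by $x_{0d}$ versus $\rho$ and $\kappa\rho$: (i) $\kappa\rho \le \rho \le x_{0d}$, (ii) $\kappa\rho \le x_{0d} < \rho$, and (iii) $x_{0d} < \kappa\rho$. Since $r(\rho, x_{0d}) = \max(\rho, x_{0d})^{\alpha/2}\rho^{1-\alpha/2}$ gives $|Q_\rho(z_0)| = \rho^{2-\alpha} r(\rho, x_{0d})^d$, the ratio $|Q_{14\rho}^+|/|Q_{\kappa\rho}^+|$ is straightforward in the pure-interior and pure-boundary cases, but the mixed case (ii) requires the extra observation $(\rho/x_{0d})^{d\alpha/2} \le \kappa^{-d\alpha/2}$ (since $\rho < x_{0d}/\kappa$) to absorb the geometry and recover the uniform exponent. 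Apart from this bookkeeping, everything else reduces to the $L_2$ energy estimate and the already-established homogeneous oscillation bound.
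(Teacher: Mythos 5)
Your proposal follows the same strategy as the paper: decompose $u = w + h$ where $w \in \sH^1_2(\Omega_T)$ solves the equation with the data $F$, $f_1$, $f_2$ localized to $Q^+_{14\rho}(z_0)$ (supplied by Theorem \ref{L-2.theorem}), apply the homogeneous oscillation estimate of Lemma \ref{hom-osc-est} to $h = u - w$, and control the oscillation of the $w$-part via Hölder and the doubling ratio $|Q^+_{14\rho}|/|Q^+_{\kappa\rho}| \lesssim \kappa^{-2\gamma_1}$. The case analysis for the doubling exponent that you flag as a potential obstacle is exactly the content of the paper's display \eqref{compared-Q}, and your observation in the mixed regime is the correct way to verify it, so there is no gap.
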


\begin{proof}
Let $w \in \sH_2^1(\Omega_T)$ be a weak solution of
\[
\sL_0 w = \mu(x_d) D_i(F_i \chi_{Q_{14\rho}^+(z_0)}(z)) + f \chi_{Q_{14\rho}^+(z_0)}(z) \quad \text{in} \quad \Omega_T
\]
with the boundary condition $w =0$ on $\{x_d =0\}$.
The existence of such solution is guaranteed by Theorem \ref{L-2.theorem}.
By the same theorem, we have
\begin{equation} \label{w.est-1}
\begin{split}
& \|Dw\|_{L_2(\Omega_T)} + \sqrt{\lambda}\|x_d^{-\alpha/2} w\|_{L_2(\Omega_T)} \leq N \|F\|_{L_2(Q_{14\rho}^+(z_0))} + N\|g\|_{L_2(Q_{14\rho}^+(z_0))}.
\end{split}
\end{equation}
Next, note that $h=u-w \in \sH_2^1(Q_{14\rho}^+(z_0))$ is a weak solution of
\[
\sL_0 h =0 \quad \text{in} \quad Q_{14\rho}^+(z_0)
\]
with the boundary condition $h =0$ on $\{x_d =0\} \cap \overline{Q_{14}(z_0)}$.
Denote
$$
\mathcal{W} =  (D_{x'}w, \overline{a}_{di}D_iw)\quad \text{and}\quad
\mathcal{H} = (D_{x'}h, \overline{a}_{di}D_ih).
$$
Then, applying Lemma \ref{hom-osc-est}, we obtain
\begin{equation} \label{h.oss.est}
\big(|\mathcal{H} - (\mathcal{H})_{Q_{\kappa \rho}^+(z_0)}| \big)_{Q_{\kappa\rho}^+(z_0)}  \leq N \kappa^{\gamma_0} \big(|Dh|^2 \big)_{Q_{14 \rho}^+(z_0)}^{1/2}.
\end{equation}
Moreover,
\begin{equation} \label{tild-h.0705}
\big(|\tilde{h} - (\tilde{h})_{Q_{\kappa \rho}^+(z_0)}| \big)_{Q_{\kappa\rho}^+(z_0)}  \leq N \kappa^{\gamma_0}\big[ \big(|\tilde{h}|^2 \big)_{Q_{14 \rho}^+(z_0)}^{1/2}  +  \big(|Dh|^2 \big)_{Q_{14 \rho}^+(z_0)}^{1/2} \big]
\end{equation}
with $\tilde{h} = \lambda^{1/2}x_d^{-\alpha/2} h$.
By the triangle inequality, H\"older's inequality, and \eqref{h.oss.est}, we have
\begin{align} \notag
& \big(|\cU - (\cU)_{Q_{\kappa \rho}^+(z_0)}| \big)_{Q_{\kappa\rho}^+(z_0)} \\ \notag
& \leq  \big(|\mathcal{H} - (\mathcal{H})_{Q_{\kappa \rho}^+(z_0)}| \big)_{Q_{\kappa\rho}^+(z_0)} + \big(|\mathcal{W} - (\mathcal{W})_{Q_{\kappa \rho}^+(z_0)}| \big)_{Q_{\kappa\rho}^+(z_0)} \\ \notag
& \leq \big(|\mathcal{H} - (\mathcal{H})_{Q_{\kappa \rho}^+(z_0)}| \big)_{Q_{\kappa\rho}^+(z_0)} + N(d) \kappa^{-\gamma_1}(|\mathcal{W}|^2)_{Q_{14\rho}^+(z_0)}^{1/2} \\ \notag
& \leq N \kappa^{\gamma_0}\big(|Dh|^2 \big)_{Q_{14 \rho}^+(z_0)}^{1/2} +  N(d) \kappa^{-\gamma_1}(|Dw|^2)_{Q_{14\rho}^+(z_0)}^{1/2}  \\ \label{U-ose.est}
& \leq N \big[ \kappa^{\gamma_0}\big(|Du|^2 \big)_{Q_{14 \rho}^+(z_0)}^{1/2} + \kappa^{-\gamma_1}(|Dw|^2)_{Q_{14\rho}^+(z_0)}^{1/2} \big],
\end{align}
where we used $\kappa \in (0,1)$ and the following fact from \eqref{Q.def} and \eqref{r.def} that
\begin{equation} \label{compared-Q}
\frac{|Q_{14\rho}^+(z_0)|}{|Q_{\kappa\rho}^+(z_0)|} = N(d) \kappa^{-2+\alpha}\Big[ \frac{ r(14\rho, x_{0d})}{r(\kappa \rho, x_{0d})} \Big]^d \leq N(d) \kappa^{-2\gamma_1}
\end{equation}
with $\gamma_1 = (d+2 -\alpha)/2$.
Then, by using  \eqref{w.est-1} and \eqref{U-ose.est}, we obtain the desired estimate for $\cU$.
The oscillation estimate for $v= \lambda^{1/2}x_d^{-\alpha/2} u$ can be proved similarly using \eqref{w.est-1} and \eqref{tild-h.0705}.
\end{proof}

\begin{proof}[Proof of Theorem \ref{sim-eqn-thrm}]
We consider the cases when $p>2$ and $p \in (1,2)$ as the case when $p=2$ was proved in Theorem \ref{L-2.theorem}.

\medskip

\noindent
{\bf Case 1:} $p>2$.
We prove the a priori estimate \eqref{apr-est} assuming that $u \in \sH_p^1(\Omega_T)$.
Let $v$ and $\cU$ be defined as in Proposition \ref{osc-prop}.
Using Proposition \ref{osc-prop}, we have
\[
\cU^{\#}  \leq N \big[ \kappa^{\gamma_0}\cM(|Du|^{2})^{1/2}   + \kappa^{-\gamma_1} \cM(|F|^2)^{1/2} + \kappa^{-\gamma_1} \cM(|g|^2)^{1/2}\big]
\]
and
\[
v^{\#}   \leq N \kappa^{\gamma_0}\big(\cM(|v|^{2})^{1/2} + \cM(|Du|^{2})^{1/2} \big)+ N \kappa^{-\gamma_1}  \big (\cM(|F|^2)^{1/2} +   \cM(|g|^2)^{1/2}\big)
\]
in $\Omega_T$, where   $g = x_d^{1-\alpha} |f_1| + \lambda^{-1/2} x_d^{-\alpha/2}|f_2|$, $\cU^{\#}$ and $v^{\#}$ are the Fefferman-Stein sharp functions of $\cU$ and $v$, respectively, and $\cM$ is the  Hardy-Littlewood maximal operator defined by using the quasi-metric constructed in Section \ref{filtration.def}.
Recall that $\cU$ and $|Du|$ are comparable.
We now apply the Fefferman-Stein theorem and Hardy-Littlewood maximal function theorem (see, for instance, \cite[Sec. 3.1-3.2]{Krylov}) to obtain
\[
\begin{split}
\|Du\|_{L_p(\Omega_T)} + \sqrt{\lambda} \|x_d^{-\alpha/2}u\|_{L_p(\Omega_T)} & \leq N \Big[\kappa^{\gamma_0}\big(\sqrt{\lambda} \|x_d^{-\alpha/2}u\|_{L_p(\Omega_T)} + \|Du\|_{L_p(\Omega_T)} \big) \\
& + \kappa^{-\gamma_1} \|F\|_{L_p(\Omega_T)} + \kappa^{-\gamma_1} \|g\|_{L_p(\Omega_T)}  \Big],
\end{split}
\]
where $N = N(d, \nu, \alpha, p)>0$ and we used $p>2$.
From this, and by choosing $\kappa \in (0,1)$ sufficiently small, we obtain
\[
\|Du\|_{L_p(\Omega_T)} + \sqrt{\lambda} \|x_d^{-\alpha/2}u\|_{L_p(\Omega_T)} \leq N \Big[ \|F\|_{L_p(\Omega_T)} +  \|g\|_{L_p(\Omega_T)}  \Big].
\]
Then,  \eqref{apr-est}  is proved.

Note that  \eqref{apr-est} implies the uniqueness of solutions in $\sH_p^1(\Omega_T)$.
Therefore, it remains to show the existence of solutions.
We first consider the special case when $F,f_1,f_2\in C_0^\infty(\Omega_T)$.
In this case, by Theorem \ref{L-2.theorem}, there is a unique solution $u\in \sH_2^1(\Omega_T)$ to \eqref{x-d.eqn}.
Since $F$ and $f$ are smooth and compactly supported, we can modify the proof of Proposition \ref{prop3} to get
\begin{equation}
                    \label{eq2.49}
\|Du\|_{L_\infty(Q^+_{1/2}(z_0))}+\|v\|_{L_\infty(Q^+_{1/2}(z_0))}\le N\|Du\|_{L_2(Q^+_{1}(z_0))}+C_{F,f}(z_0)
\end{equation}
for any $z_0\in \partial\Omega_\infty\cap\{t<T\}$, where the constant $C_{F,f}(z_0)$ vanishes when $|z_0|$ is sufficiently large.
A similar estimate holds in the interior of the domain:
\begin{align}
                    \label{eq2.49b}
&\|Du\|_{L_\infty(Q^+_{1/2}(z_0))}+\|v\|_{L_\infty(Q^+_{1/2}(z_0))}\notag\\
&\le N x_{0d}^{- \alpha d/4}\big\||Du|+x_{0d}^{-\alpha/2}|u|\big\|_{L_2(Q^+_{1}(z_0))}+C_{F,f}(z_0)
\end{align}
for any $z_0\in \Omega_T$ satisfying $|x_{0d}|\ge 1/2$.
%Now by using the simple inequality
%$$
%\sum_{k=1}^\infty a_k^{p/2}\le \Big(\sum_{k=1}^\infty a_k\Big)^{p/2},
%$$
From \eqref{eq2.49} and \eqref{eq2.49b}, we see that $Du$ and $v$ are bounded in $\Omega_T$, which together with the equation \eqref{x-d.eqn} implies that $u\in \sH_p^1(\Omega_T)$.
Finally, for general $F$ and $f$, we take sequences of functions $\{F^{(n)}\}$, $\{f_1^{(n)}\}$, and $\{f_2^{(n)}\}$   in $C_0^\infty(\Omega_T)$ such that
$$
F^{(n)}\to F,\quad x_d^{1-\alpha}f_1^{(n)}\to x_d^{1-\alpha}f_1,\quad x_d^{-\alpha/2}f_2^{(n)}\to x_d^{-\alpha/2}f_2
$$
in $L_p(\Omega_T)$.
From the proof above, for each $n\in \bN$ there is a unique solution $u^{(n)}\in \sH_p^1(\Omega_T)$ to the equation \eqref{x-d.eqn} with $F^{(n)}$, $f_1^{(n)}$, and $f_2^{(n)}$ in place of $F$, $f_1$ and $f_2$.
By using the a priori estimate \eqref{apr-est}, we see that  $\{Du^{(n)}\}$ and $\{\sqrt\lambda x_d^{-\alpha/2}u^{(n)}\}$ are Cauchy sequences in $L_p(\Omega_T)$.
After passing to the limit, we then obtain a solution $u\in \sH_p^1(\Omega_T)$ to \eqref{x-d.eqn}.

\medskip

\noindent
{\bf Case 2:} $p \in (1,2)$.
As before, we first prove  \eqref{apr-est}.
We follow the standard duality argument.
Let $q = {p}/(p-1) \in (2,\infty)$, $G =(G_1, G_2,\ldots, G_d)\in L_q(\Omega_T)^d$ and $h = h_1 + h_2$ such that $\tilde{h} = x_d^{1-\alpha}|h_1| + \lambda^{-1/2}x_d^{-\alpha/2}|h_2| \in L_q(\Omega_T)$.
We consider the adjoint problem
\begin{equation} \label{adj-veqn.1}
- \tilde u_t + \lambda \bar c_0 \tilde u - \mu(x_d) D_i\big( \overline{a}_{ji}(x_d) D_{j} \tilde u +G_{i}\chi_{(-\infty, T)}\big)  =  h\chi_{(-\infty, T)}
\end{equation}
in $\bR^{d+1}_+$ with the boundary condition $v = 0$ on $\partial \bR^{d+1}_+$.
By Case 1 and a change of the time variable $t\to -t$, there exists a unique weak solution $\tilde u \in \sH^1_q(\bR \times \bR_+^d)$ of \eqref{adj-veqn.1} and
\begin{equation}\label{eqnv-est-06}
\begin{split}
& \int_{\bR^{d+1}_+} \big(|D\tilde u(z)|^q + \lambda^{q/2} |x_d^{-\alpha/2}\tilde u(z)|^q \big) \,dz \leq N\int_{\Omega_T} \big(|G(z)|^q + |\tilde{h}(z)|^q \big) \, dz.
\end{split}
\end{equation}
Note also $\tilde u =0$ for $t \geq T$ because of the uniqueness of solutions to \eqref{adj-veqn.1}.
Then, as in Definition \ref{weak-sol-def}, we test \eqref{x-d.eqn} with $v$ and test \eqref{adj-veqn.1} with $u$.
We  then obtain
\begin{align} \notag
& \int_{\Omega_T}\big(G(z)\cdot D u(z)  -  \mu(x_d)^{-1} h(z) u(z) \big)\, dz \\ \label{uv-dual.06}
&= \int_{\Omega_T}\big(F(z)\cdot D \tilde u (z) -  \mu(x_d)^{-1} f(z) \tilde u(z) \big)\, dz.
\end{align}
We next control the terms on the right-hand side of \eqref{uv-dual.06}.
By H\"{o}lder's inequality, and \eqref{eqnv-est-06}, the first term on the right-hand side of \eqref{uv-dual.06} can be bounded as
\[
\left|\int_{\Omega_T} F(z)\cdot D \tilde u (z) dz \right| \leq  N \|F\|_{L_p(\Omega_T)} \Big[\|G\|_{L_q(\Omega_T)} + \|\tilde{h}\|_{L_q(\Omega_T)} \Big].
\]
To bound the second term  on the right-hand side of \eqref{uv-dual.06}, we use the condition on $\mu$ in \eqref{c-mu.cond},  H\"{o}lder's inequality,  and Hardy's inequality to obtain
\[
\begin{split}
& \left|\int_{\Omega_T} \mu(x_d)^{-1} f(z) \tilde u(z) dz \right| \\
&  \leq N(\nu) \int_{\Omega_T}\big( |x_d^{1-\alpha}f_1(z)| \,  \big| \tilde u/x_d\big|  + | x_d^{-\alpha/2}f_2| \, |x_d^{-\alpha/2}\tilde u|  \big)\, dz  \\
& \leq N(\nu)\Big[ \|x_d^{1-\alpha}f_1\|_{L_p(\Omega_T)} \|\tilde u/x_d\|_{L_q(\Omega_T)} + \|x_d^{-\alpha/2}f_2\|_{L_p(\Omega_T)} \|x_d^{-\alpha/2}\tilde u\|_{L_q(\Omega_T)} \Big] \\
&  \leq N(\nu, d, q) \|g\|_{L_p(\Omega_T)} \Big[  \|D\tilde u\|_{L_q(\Omega_T)} + \lambda^{1/2} \|x_d^{-\alpha/2}\tilde u\|_{L_q(\Omega_T)} \Big]\\
& \leq N\|g\|_{L_p(\Omega_T)}\Big[\|G\|_{L_q(\Omega_T)} + \|\tilde{h}\|_{L_q(\Omega_T)} \Big],
\end{split}
\]
where \eqref{eqnv-est-06} is used in the last inequality and we recall
$$
g = x_d^{1-\alpha} |f_1| + \lambda^{-1/2}x_d^{-\alpha/2}|f_2|.
$$
In summary, it follows from \eqref{uv-dual.06} that
\[
\begin{split}
& \left|\int_{\Omega_T}\big(G(z)\cdot D u(z) -
\mu(x_d)^{-1} h(z)   u (z)\big)\, dz\right| \\
& \leq N\Big(\|F\|_{L_p(\Omega)} + \|g\|_{L_{p}(\Omega_T)}\Big)
\Big( \|G\|_{L_q(\Omega_T)} + \|\tilde{h}\|_{L_q(\Omega_T)} \Big).
\end{split}
\]
Because of the last estimate, the condition \eqref{c-mu.cond} for $\mu$, and as $G$ and $h$ are arbitrary, we obtain the a priori estimate  \eqref{apr-est}.

Now we prove the existence of solutions.
As in Case 1, we only need to consider the case when $F,f_1,f_2\in C_0^\infty(\Omega_T)$.
By Theorem \ref{L-2.theorem}, there is a unique solution $u\in \sH_2^1(\Omega_T)$ to \eqref{x-d.eqn}.
Now we take $G,f_1,f_2\in C_0^\infty(\Omega_T)$. Let $w\in \sH_2^1(\Omega_T)$ be the unique solution to \eqref{adj-veqn.1}.
According to the proof in Case 1, we know that $w\in \sH_q^1(\Omega_T)$.
By the duality argument above, we infer that $Du, v\in L_p(\Omega_T)$ and  \eqref{apr-est} holds.
Therefore, from the equation, we conclude that  $u\in \sH_p^1(\Omega_T)$.
The theorem is proved.
\end{proof}

%%%%%%%%%%%%%%%%%%%%%%%%%%%%%%%%%%%%%%%%%%%%

\section{Proofs of Theorems \ref{main-thrm} and \ref{thrm-2}} \label{sec-5}

In this section, we prove Theorems \ref{main-thrm} and  \ref{thrm-2}.
Recall the definitions of
$$
[a_{ij}]_{14\rho, z'_0}(\cdot)\quad \text{and}\quad
[c_0]_{14\rho, z'_0}(\cdot)
$$
in \textup{Assumption} \ref{mean-osc} \textup{($\rho_0, \delta$)}.
We first prove the following lemma on the oscillation estimates of solutions of \eqref{eq6.12}.

\begin{lemma}
            \label{oss-lem-vmo}
Let $\nu \in (0,1)$, $\alpha \in (0,2)$, $\rho_0 >0$, $\delta>0$, and assume that  \eqref{ellipticity-cond}, \eqref{c-mu.cond}, and \textup{Assumption} \ref{mean-osc} \textup{($\rho_0, \delta$)} are satisfied.
Let $q \in (2,  \infty)$ and suppose that $u \in \sH_{q, \textup{loc}}^1(\Omega_T)$ is a weak solution of \eqref{eq6.12} with $F \in L_{2, \textup{loc}}(\Omega_T)$ and $f = f_1 + f_2$ such that  $g = x_d^{1-\alpha}|f_1| + \lambda^{-1/2} x_d^{-\alpha/2} |f_2| \in L_{2, \textup{loc}}(\Omega_T)$.
Then, there is a constant $N = N(\nu, \alpha, d, q)>0$ such that
\[
\begin{split}
&\big(|\cU - (\cU)_{Q_{\kappa \rho}^+(z_0)}| \big)_{Q_{\kappa \rho}^+(z_0)} +
\big(|v - (v)_{Q_{\kappa \rho}^+(z_0)}| \big)_{Q_{\kappa \rho}^+(z_0)} \\
& \leq N \big( \kappa^{\gamma_0} + \kappa^{-\gamma_1} \delta^{1/2-1/q} \big)\Big[ \big(|v|^q\big)_{Q_{14 \rho}^+(z_0)}^{1/q} + \big(|Du|^q\big)_{Q_{14 \rho}^+(z_0)}^{1/q} \Big]
 \\
& \quad  + N \kappa^{-\gamma_1} \Big[ \big(|F|^2\big)_{Q_{14 \rho}^+(z_0)}^{1/2} + \big(|g|^2\big)_{Q_{14 \rho}^+(z_0)}^{1/2}  \Big]
\end{split}
\]
for every $z_0 \in \overline{\Omega}_T$, $\rho \in (0, \rho_0/14)$, and $\kappa \in (0,1)$, where $\cU = (D_{x'}u, U_{Q_{14 \rho}^+(z_0)})$ with $U_{Q_{14 \rho}^+(z_0)} = [a_{dj}]_{14\rho, z'_0}(x_d) D_ju$, and $v = \lambda^{1/2}x_d^{-\alpha/2}u$.
Here, $\gamma_0 = \min\{1, 2-\alpha\}/4$ and $\gamma_1 = (d+2 -\alpha)/2$.
\end{lemma}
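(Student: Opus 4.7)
The plan is to freeze the coefficients along $x'$ by the VMO approximants of Assumption \ref{mean-osc}, split $u = h + w$ with $h$ solving the frozen-coefficient homogeneous equation and $w$ carrying both the data and the freezing error, then apply Lemma \ref{hom-osc-est} to $h$ and Theorem \ref{L-2.theorem} to $w$. A H\"older interpolation will convert the VMO smallness $\delta$ into $\delta^{1/2-1/q}$ at the cost of raising the integrability of $Du$ and $v$ from $2$ to $q$, which is exactly why the lemma requires $u \in \sH_{q,\textup{loc}}^1$ instead of only $\sH_{2,\textup{loc}}^1$.

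More precisely, fix $z_0 = (t_0, z_0', x_{0d}) \in \overline{\Omega}_T$ and $\rho \in (0, \rho_0/14)$, and set $\bar a_{ij}(x_d) := [a_{ij}]_{14\rho, z_0'}(x_d)$, $\bar c_0(x_d) := [c_0]_{14\rho, z_0'}(x_d)$, extended to $\bR_+$ while preserving the ellipticity bounds. Let $\bar{\sL}_0$ denote the associated frozen operator. Inside $Q_{14\rho}^+(z_0)$ the equation for $u$ becomes
\[
\bar{\sL}_0 u = \mu(x_d) D_i\bigl(F_i + (a_{ij}-\bar a_{ij}) D_j u\bigr) + f_1 + \bigl(f_2 + \lambda(\bar c_0 - c_0) u\bigr).
\]
Let $w \in \sH_2^1(\Omega_T)$ be the solution given by Theorem \ref{L-2.theorem} of
\[
\bar{\sL}_0 w = \mu(x_d) D_i\bigl(\hat F_i\, \chi_{Q_{14\rho}^+(z_0)}\bigr) + (f_1 + \hat f_2)\, \chi_{Q_{14\rho}^+(z_0)} \quad\text{in }\Omega_T,
\]
with $w = 0$ on $\partial\bR^d_+$, where $\hat F_i = F_i + (a_{ij}-\bar a_{ij}) D_j u$ and $\hat f_2 = f_2 + \lambda(\bar c_0 - c_0) u$. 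Theorem \ref{L-2.theorem} then delivers
\[
\|Dw\|_{L_2(\Omega_T)} + \sqrt\lambda\,\|x_d^{-\alpha/2} w\|_{L_2(\Omega_T)} \leq N\bigl(\|F\|_{L_2} + \|(a-\bar a)Du\|_{L_2} + \|g\|_{L_2} + \|(\bar c_0-c_0)v\|_{L_2}\bigr),
\]
all $L_2$-norms taken over $Q_{14\rho}^+(z_0)$ and $v = \sqrt\lambda\, x_d^{-\alpha/2} u$.

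To control the two freezing-error terms, I use that $|a-\bar a|, |\bar c_0-c_0| \leq 2\nu^{-1}$ while their $L_1$-averages on $Q_{14\rho}^+(z_0)$ are at most $\delta$. H\"older's inequality with conjugate exponents $q/(q-2)$ and $q/2$ yields
\[
\Bigl(\fint_{Q_{14\rho}^+(z_0)}|a-\bar a|^2|Du|^2\Bigr)^{1/2} \leq N\delta^{1/2-1/q}\bigl(|Du|^q\bigr)^{1/q}_{Q_{14\rho}^+(z_0)},
\]
and analogously for $(\bar c_0-c_0) v$. Combined with the volume comparison $|Q_{14\rho}^+(z_0)|/|Q_{\kappa\rho}^+(z_0)| \leq N\kappa^{-2\gamma_1}$ from \eqref{compared-Q}, this upgrades the $L_2$-bound on $w$ to
\[
\bigl(|Dw|\bigr)_{Q_{\kappa\rho}^+(z_0)} + \bigl(|v_w|\bigr)_{Q_{\kappa\rho}^+(z_0)} \leq N\kappa^{-\gamma_1}\Bigl[(|F|^2)^{1/2} + (|g|^2)^{1/2} + \delta^{1/2-1/q}\bigl((|Du|^q)^{1/q} + (|v|^q)^{1/q}\bigr)\Bigr],
\]
where all averages on the right are over $Q_{14\rho}^+(z_0)$ and $v_w = \sqrt\lambda\, x_d^{-\alpha/2} w$.

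Setting $h := u - w$, the function $h$ solves $\bar{\sL}_0 h = 0$ in $Q_{14\rho}^+(z_0)$ with the zero Dirichlet condition on $\{x_d=0\}$ when present, so Lemma \ref{hom-osc-est} controls the mean oscillations of $\cH := (D_{x'}h, \bar a_{dj}D_jh)$ and $v_h := \sqrt\lambda\, x_d^{-\alpha/2} h$ on $Q_{\kappa\rho}^+(z_0)$ by $\kappa^{\gamma_0}$ times their $L_2$-averages on $Q_{14\rho}^+(z_0)$. Writing $\cU = \cH + \cW$ with $\cW = (D_{x'}w, \bar a_{dj}D_jw)$ and $v = v_h + v_w$, I combine the bound on $h$ with the bound on $w$ via the triangle inequality for mean oscillations, the elementary estimates $|Dh|^2 \leq 2(|Du|^2+|Dw|^2)$ and $|v_h|^2 \leq 2(|v|^2+|v_w|^2)$, Jensen's inequality $(|\cdot|^2)^{1/2} \leq (|\cdot|^q)^{1/q}$, and $\kappa^{\gamma_0} \leq \kappa^{-\gamma_1}$ for $\kappa \in (0,1)$, to obtain the claimed estimate. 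The delicate step is the H\"older interpolation producing $\delta^{1/2-1/q}$; the rest is bookkeeping closely modeled on the proof of Proposition \ref{osc-prop}.
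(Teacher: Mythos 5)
Your proof is correct and follows essentially the same route as the paper: the paper absorbs the coefficient-freezing errors into modified data $\tilde F, \tilde f$ and then invokes Proposition \ref{osc-prop} as a black box, whereas you reproduce the $u=h+w$ decomposition from the proof of that proposition directly (solving for $w$ via Theorem \ref{L-2.theorem}, applying Lemma \ref{hom-osc-est} to $h$, and using the volume comparison \eqref{compared-Q}). The key H\"older interpolation giving $\delta^{1/2-1/q}$ and the bookkeeping with $\kappa^{-\gamma_1}$ are identical to the paper's.
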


\begin{proof}
We write $z_0' = (t_0, x_0')$.
Let $\tilde F = (\tilde F_1, \tilde F_2,\ldots, \tilde F_d)$ with
\[
 \tilde F_i =\big[ \big(a_{ij} -[a_{ij}]_{14\rho, z'_0}(x_d) \big)D_j u + F_i\big] \chi_{Q_{14\rho}^+(z_0)}(z), \quad i =1, 2,\ldots, d,
\]
so that  $u \in \sH_{p}^1(Q_{14\rho}^+(z_0))$ is a weak solution of
\[
u_t + \lambda [c_0]_{14\rho,z_0'} u - \mu(x_d) D_{i}\big([a_{ij}]_{14\rho, z'_0}(x_d) D_j u + \tilde F_i \big) = \tilde f_1+\tilde f_2 \quad \text{in} \quad Q_{14\rho}^+(z_0)
\]
with the boundary condition $u =0$ on $\{x_d =0\}$,
where
\[
 \tilde f_1 = f_1 \chi_{Q_{14\rho}^+(z_0)}(z), \quad \tilde f_2 =\big[ \lambda \big([c_0]_{14\rho, z'_0}(x_d) - c_0\big) u + f_2\big]\chi_{Q_{14\rho}^+(z_0)}(z).
\]
Then, applying Proposition \ref{osc-prop}, we have
\[
\begin{split}
 \big(|\cU - (\cU)_{Q_{\kappa \rho}^+(z_0)}| \big)_{Q_{\kappa \rho}^+(z_0)} & \leq N \kappa^{\gamma_0} \big(|\cU|^2\big)_{Q_{14 \rho}^+(z_0)}^{1/2} \\
& \qquad + N \kappa^{-\gamma_1} \Big[\big(|\tilde F|^2\big)_{Q_{14 \rho}^+(z_0)}^{1/2} + \big(|\tilde{g}|^2\big)_{Q_{14 \rho}^+(z_0)}^{1/2}\Big],
\end{split}
\]
where $\tilde g =x_d^{1-\alpha} |\tilde f_1| + \lambda^{-1/2} x_d^{-\alpha/2}|\tilde f_2|$ and $N = N(d, \nu, \alpha) >0$.
Now, by H\"older's inequality,
\[
\begin{split}
& \big(|\tilde F|^2\big)_{Q_{14 \rho}^+(z_0)}^{1/2} \leq \big(|F|^2\big)_{Q_{14 \rho}^+(z_0)}^{1/2} + \left( \fint_{Q_{14 \rho}^+(z_0)} | a_{ij} -[a_{ij}]_{14\rho, z'_0}(x_d)|^2 |Du|^2\, dz \right)^{1/2} \\
& \leq \big(|F|^2\big)_{Q_{14 \rho}^+(z_0)}^{1/2}  + \big(|Du|^q\big)_{Q_{14 \rho}^+(z_0)}^{1/q} \left( \fint_{Q_{14 \rho}^+(z_0)} \big|a_{ij} -[a_{ij}]_{14\rho, z'_0}(x_d)\big|^{\frac{2q}{q-2}}\, dz \right)^{1/2-1/q}.
\end{split}
\]
Then it follows from the boundedness of $(a_{ij})$ in \eqref{ellipticity-cond} and \textup{Assumption} \ref{mean-osc} \textup{($\rho_0, \delta$)} that
\[
 \big(|\tilde F|^2\big)_{Q_{14 \rho}^+(z_0)}^{1/2} \leq \big(|F|^2\big)_{Q_{14 \rho}^+(z_0)}^{1/2}  + N(\nu, q)  \delta^{1/2-1/q} \big(|Du|^q\big)_{Q_{14 \rho}^+(z_0)}^{1/q}.
\]
Similarly, with the condition \eqref{c-mu.cond}, we also have
\[
\begin{split}
& \big(|\tilde g|^2\big)_{Q_{14 \rho}^+(z_0)}^{1/2} \leq \big(|g|^2\big)_{Q_{14 \rho}^+(z_0)}^{1/2} + \lambda^{1/2} \left(\fint_{Q_{14\rho}^+(z_0)} \big|[c_0]_{14\rho, z'_0}(x_d) - c_0\big|^2 \big|x_d^{-\alpha/2}u\big|^2 \right)^{1/2} \\
& \leq \big(|g|^2\big)_{Q_{14 \rho}^+(z_0)}^{1/2} + N(\nu, q) \delta^{1/2-1/q} \lambda^{1/2} \big(|x_d^{-\alpha/2}u|^q\big)_{Q_{14 \rho}^+(z_0)}^{1/q}.
\end{split}
\]
In conclusion, we obtain
\[
\begin{split}
&  \big(|\cU - (\cU)_{Q_{\kappa \rho}^+(z_0)}| \big)_{Q_{\kappa \rho}^+(z_0)} \\
& \leq N \Big[ \kappa^{\gamma_0} \big(|\cU|^2\big)_{Q_{14 \rho}^+(z_0)}^{1/2} + \kappa^{-\gamma_1}\delta^{1/2-1/q}  \Big(\big(|\cU|^q\big)_{Q_{14 \rho}^+(z_0)}^{1/q}
+\big(|v|^q\big)_{Q_{14 \rho}^+(z_0)}^{1/q}\Big) \Big]
 \\
& \quad  + N \kappa^{-\gamma_1} \Big[ \big(|F|^2\big)_{Q_{14 \rho}^+(z_0)}^{1/2} + \big(|g|^2\big)_{Q_{14 \rho}^+(z_0)}^{1/2}  \Big].
\end{split}
\]
From this, H\"{o}lder's inequality as $q>2$, and $|\cU| \leq N |Du|$, the mean oscillation estimates of $\cU$ is proved.
The mean oscillation estimate for $v = \lambda^{1/2} x_d^{-\alpha/2}u$ can be obtained similarly.
The proof of the lemma is completed.
\end{proof}

The next result  gives an oscillation estimate of solutions to \eqref{eq6.12}, each of which is supported in a small time interval.
%======

\begin{lemma}
            \label{small-spt-lemma}
Let $\nu \in (0,1)$, $\alpha \in (0,2)$, $\rho_0, \delta>0$ be fixed numbers, and assume that  \eqref{ellipticity-cond}, \eqref{c-mu.cond}, and \textup{Assumption} \ref{mean-osc} \textup{($\rho_0, \delta$)} are satisfied.
Assume also that $F \in L_{2, \textup{loc}}(\Omega_T)$ and $f = f_1 + f_2$ such that  $g = x_d^{1-\alpha}|f_1| + \lambda^{-1/2} x_d^{-\alpha/2} |f_2| \in L_{2, \textup{loc}}(\Omega_T)$.  Assume further that  $u \in \sH_{q, \textup{loc}}^1(\Omega_T)$ is a weak solution to \eqref{eq6.12} with $q \in (2,\infty)$, and $\textup{spt}(u) \subset  (t_1 - (\rho_0 \rho_1)^{2-\alpha}, t_1 + (\rho_0 \rho_1)^{2-\alpha})$ for some  $t_1 \in \bR$ and $\rho_1>0$.
Then,
\[
\begin{split}
&  \big(|\cU - (\cU)_{Q_{\kappa \rho}^+(z_0)}| \big)_{Q_{\kappa \rho}^+(z_0)}  +\big(|v - (v)_{Q_{\kappa \rho}^+(z_0)}| \big)_{Q_{\kappa \rho}^+(z_0)} \\
& \leq N \big[ \kappa^{\gamma_0} + \kappa^{-\gamma_1}\delta^{1/2-1/q} + \kappa^{-2\gamma_1}\rho_1^{(1-1/q)(2-\alpha)}  \big] \big[ \big(|v|^q\big)_{Q_{14 \rho}^+(z_0)}^{1/q} +  \big(|Du|^q\big)_{Q_{14 \rho}^+(z_0)}^{1/q} \big]
 \\
& \quad  + N \kappa^{-\gamma_1} \Big[ \big(|F|^2\big)_{Q_{14 \rho}^+(z_0)}^{1/2} + \big(|g|^2\big)_{Q_{14 \rho}^+(z_0)}^{1/2}  \Big]
\end{split}
\]
for every $z_0 \in \overline{\Omega}_T$, $\rho >0$, and $\kappa \in (0,1)$, where  $N = N(\nu, \alpha, d, q)>0$ and  $\cU = (D_{x'}u, U)$ with $U = [a_{dj}]_{14\rho, z'_0}(x_d) D_ju$, and $v = \lambda^{1/2} x_d^{-\alpha/2} u$.
\end{lemma}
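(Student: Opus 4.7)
The plan is to dichotomize on the size of $\rho$ relative to $\rho_0$ and use Lemma \ref{oss-lem-vmo} in the easy regime, while exploiting the small time support of $u$ in the complementary regime. Specifically, when $\rho \in (0,\rho_0/14)$, the conclusion follows immediately from Lemma \ref{oss-lem-vmo}, and the extra term $\kappa^{-2\gamma_1}\rho_1^{(1-1/q)(2-\alpha)}$ is not needed. So the real work is for $\rho \geq \rho_0/14$.

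In the case $\rho \geq \rho_0/14$, I would begin by replacing the oscillation with twice the mean,
\[
\big(|\cU - (\cU)_{Q_{\kappa \rho}^+(z_0)}|\big)_{Q_{\kappa\rho}^+(z_0)} \leq 2\big(|\cU|\big)_{Q_{\kappa\rho}^+(z_0)},
\]
and analogously for $v$. Since $\text{spt}(u)$ is contained in a time slab $I$ of length at most $2(\rho_0\rho_1)^{2-\alpha}$, both $\cU$ and $v$ vanish outside $I\times \bR^d_+$. H\"older's inequality then yields
\[
\big(|\cU|\big)_{Q_{\kappa\rho}^+(z_0)} \leq \Big(\frac{|Q_{\kappa\rho}^+(z_0)\cap (I\times\bR^d_+)|}{|Q_{\kappa\rho}^+(z_0)|}\Big)^{1-1/q}\big(|\cU|^q\big)_{Q_{\kappa\rho}^+(z_0)}^{1/q}.
\]
The time-ratio is bounded by $N(\rho_0\rho_1/(\kappa\rho))^{2-\alpha}$, and since $\rho_0 \leq 14\rho$ in this regime, this is at most $N\kappa^{-(2-\alpha)}\rho_1^{2-\alpha}$. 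Raising to the power $1-1/q$ produces the factor $\rho_1^{(1-1/q)(2-\alpha)}$ times a power of $\kappa^{-1}$.

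Next I would pass from the $L^q$ average over $Q_{\kappa\rho}^+(z_0)$ to the one over $Q_{14\rho}^+(z_0)$. By the volume comparison \eqref{compared-Q}, the ratio $|Q_{14\rho}^+(z_0)|/|Q_{\kappa\rho}^+(z_0)|$ is at most $N\kappa^{-2\gamma_1}$, so
\[
\big(|\cU|^q\big)_{Q_{\kappa\rho}^+(z_0)}^{1/q} \leq N\kappa^{-2\gamma_1/q}\big(|\cU|^q\big)_{Q_{14\rho}^+(z_0)}^{1/q}.
\]
Combining these bounds and using $\kappa \in (0,1)$ together with $(2-\alpha)(1-1/q) + 2\gamma_1/q \leq 2\gamma_1$ (which holds because $2-\alpha \leq 2\gamma_1 = d+2-\alpha$) lets me absorb all the negative powers of $\kappa$ into a single $\kappa^{-2\gamma_1}$. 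Finally, $|\cU|\leq N|Du|$ from the boundedness of $[a_{dj}]_{14\rho,z_0'}$ converts the right-hand side into the required $(|Du|^q)_{Q_{14\rho}^+(z_0)}^{1/q}$, and the same argument verbatim handles $v$.

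The potentially awkward step is the bookkeeping of exponents of $\kappa$ in the large-$\rho$ case: one has to check that the combined exponent from the support-truncation and the volume-rescaling is dominated by $2\gamma_1$ so that the two cases can be written under a single bound. Beyond that, the estimate is essentially a routine consequence of Lemma \ref{oss-lem-vmo} and H\"older's inequality, with the small time support playing the role of a smallness parameter that renders Assumption \ref{mean-osc} superfluous on cylinders bigger than $\rho_0$.
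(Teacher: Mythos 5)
Your proposal is correct and follows essentially the same route as the paper: the dichotomy on $\rho$ versus $\rho_0/14$, the triangle-inequality reduction of the oscillation to $2(|\cU|)_{Q_{\kappa\rho}^+}$, H\"older against the time-support indicator, and the volume comparison \eqref{compared-Q}. The only difference is the order of operations—you apply H\"older on $Q_{\kappa\rho}^+$ and then rescale to $Q_{14\rho}^+$, while the paper bounds $(|\cU|)_{Q_{\kappa\rho}^+}\le N\kappa^{-2\gamma_1}(|\cU|)_{Q_{14\rho}^+}$ first and then applies H\"older on the larger cylinder, which makes the $\kappa^{-2\gamma_1}$ factor appear directly without the extra exponent check; your check that $(2-\alpha)(1-1/q)+2\gamma_1/q\le 2\gamma_1$ (equivalently $d/q\le d$) is correct and closes that minor gap.
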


\begin{proof}
Note that if $\rho < \rho_0/14$,  the assertion of the lemma follows directly from Lemma \ref{oss-lem-vmo}.
It then remains to consider the case $\rho \geq \rho_0/14$.  We write $\Gamma = (t_1 - (\rho_0 \rho_1)^{2-\alpha}, t_1 + (\rho_0 \rho_1)^{2-\alpha})$.
It follows from \eqref{compared-Q}, the triangle inequality, and H\"{o}lder's inequality  that
\[
\begin{split}
&  \big(|\cU - (\cU)_{Q_{\kappa \rho}^+(z_0)}| \big)_{Q_{\kappa \rho}^+(z_0)}
\le 2\big(|\cU| \big)_{Q_{\kappa \rho}^+(z_0)}\\
& \leq N(d)\kappa^{-2\gamma_1} \left(\fint_{Q_{14\rho}^+(z_0)} |\cU|^q\, dz \right)^{1/q} \left(\fint_{Q_{14\rho}^+(z_0)} \chi_{\Gamma}(z)\, dz \right)^{1-1/q} \\
&  \leq N \kappa^{-2\gamma_1} \left(\frac{\rho_0\rho_1}{\rho}\right)^{(1-1/q)(2-\alpha)} \big(|\cU|^q\big)_{Q_{14\rho}^+(z_0)}^{1/q} \\
& \leq N \kappa^{-2\gamma_1}\rho_1^{(1-1/q)(2-\alpha)}\big(|\cU|^q\big)_{Q_{14\rho}^+(z_0)}^{1/q}.
\end{split}
\]
Therefore, the oscillation estimate for $\cU$ follows.
The oscillation estimate for $v$ can be proved similarly.
The proof of the lemma is completed.
\end{proof}

We now give a corollary of Lemma \ref{small-spt-lemma}, which proves the a priori estimate \eqref{main-est-0508} when $p>2$ and $u$ has a small support in time variable.

\begin{corollary} \label{small-support-lemma}
Let $\nu, \rho_0 \in (0,1)$, $\alpha \in (0,2)$, and $p \in (2, \infty)$.
There exist sufficiently small numbers $\delta = \delta(d, \nu, \alpha, p) >0$ and $\rho_1 = \rho_1 (d, \nu, \alpha, p) >0$ such that the following assertions hold.
Suppose that  \eqref{ellipticity-cond}, \eqref{c-mu.cond}, and  \textup{Assumption} \ref{mean-osc} \textup{($\rho_0, \delta$)} are satisfied, and suppose that $F \in L_p(\Omega_T)^d$   and $f = f_1 + f_2$ such that $g= x_d^{1-\alpha} |f_1| + \lambda^{-1/2}x_d^{-\alpha/2}|f_2| \in L_p(\Omega_T)$ with $\lambda>0$.
Then if  $u \in \sH^{1}_{p}(\Omega_T)$ is weak solution of \eqref{eq6.12} satisfying $\textup{spt}(u) \subset (t_1 - (\rho_1 \rho_0)^{2-\alpha}, t_1 +(\rho_1 \rho_0)^{2-\alpha})$ for some $t_1 \in \bR$, we have
\begin{equation} \label{lemma-est-0508-1}
\begin{split}
& \|Du\|_{L_p(\Omega_T)} +   \sqrt{\lambda} \| x_d^{-\alpha /2} u\|_{L_p(\Omega_T)}  \leq  N \Big[\|F\|_{L_p(\Omega_T)}  +   \|g\|_{L_p(\Omega_T)} \Big],
 \end{split}
\end{equation}
where $N = N(\nu, d, \alpha, p)>0$.
\end{corollary}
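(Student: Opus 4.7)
The plan is to combine the mean oscillation estimate in Lemma \ref{small-spt-lemma} with the Fefferman--Stein sharp function theorem and the Hardy--Littlewood maximal function theorem in the space of homogeneous type $(\Omega_\infty,\varrho,dz)$ constructed in Section \ref{filtration.def}, using the small-support hypothesis to make the ``large scale'' error term small.

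First, I would fix an auxiliary exponent $q \in (2,p)$. Since $u \in \sH^{1}_p(\Omega_T)$ we have $u \in \sH^{1}_{q,\textup{loc}}(\Omega_T)$, so Lemma \ref{small-spt-lemma} applies at every $z_0 \in \overline{\Omega}_T$ and every $\rho>0$. Taking the supremum over parabolic cylinders $Q^+_\rho(z_0)$ containing a given point $z$ yields the pointwise bound
\[
v^{\#}(z) + \cU^{\#}(z) \leq N\Theta\left[\cM(|v|^q)^{1/q}(z) + \cM(|Du|^q)^{1/q}(z)\right] + N\kappa^{-\gamma_1}\left[\cM(|F|^2)^{1/2}(z) + \cM(|g|^2)^{1/2}(z)\right],
\]
where $v = \sqrt{\lambda}\,x_d^{-\alpha/2}u$, $\cU = (D_{x'}u, U)$ is comparable in size to $Du$, and
\[
\Theta = \kappa^{\gamma_0} + \kappa^{-\gamma_1}\delta^{1/2-1/q} + \kappa^{-2\gamma_1}\rho_1^{(1-1/q)(2-\alpha)}.
\]

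Next, using the comparisons $\cM_{\textup{dy}}f \leq N\cM f$ and $f^{\#}_{\textup{dy}} \leq Nf^{\#}$ from Section \ref{filtration.def}, I would invoke the Fefferman--Stein sharp function theorem to obtain $\|h\|_{L_p(\Omega_T)} \leq N\|h^{\#}\|_{L_p(\Omega_T)}$ for $h \in \{v,\cU\}$ (legitimate because $u \in \sH^{1}_p$ guarantees $v,Du \in L_p(\Omega_T)$), followed by the Hardy--Littlewood maximal function theorem on $L_{p/q}$ and $L_{p/2}$ (both valid since $q<p$ and $2<p$). Together they give
\[
\|Du\|_{L_p(\Omega_T)} + \sqrt{\lambda}\|x_d^{-\alpha/2}u\|_{L_p(\Omega_T)} \leq N\Theta\bigl(\|Du\|_{L_p(\Omega_T)} + \sqrt{\lambda}\|x_d^{-\alpha/2}u\|_{L_p(\Omega_T)}\bigr) + N\kappa^{-\gamma_1}\bigl(\|F\|_{L_p(\Omega_T)} + \|g\|_{L_p(\Omega_T)}\bigr).
\]

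The final step is a three-stage parameter choice. I would first pick $\kappa \in (0,1)$ small enough that $N\kappa^{\gamma_0} \leq 1/4$; then, with $\kappa$ fixed, choose $\delta = \delta(d,\nu,\alpha,p)>0$ small so that $N\kappa^{-\gamma_1}\delta^{1/2-1/q} \leq 1/4$; finally, choose $\rho_1 = \rho_1(d,\nu,\alpha,p)>0$ small so that $N\kappa^{-2\gamma_1}\rho_1^{(1-1/q)(2-\alpha)} \leq 1/4$. This makes $N\Theta \leq 3/4$, so the first term on the right-hand side is absorbed into the left, yielding \eqref{lemma-est-0508-1}.

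The main technical point (rather than a genuine obstacle) is that the Fefferman--Stein absorption step is only legal because the quantities we absorb are already known to be finite---this is precisely why the hypothesis $u \in \sH^{1}_p(\Omega_T)$ is imposed rather than merely $u \in \sH^{1}_{q,\textup{loc}}$. The role of the small time support is exactly to control the ``large $\rho$'' regime of Lemma \ref{small-spt-lemma} via the $\rho_1^{(1-1/q)(2-\alpha)}$ factor, which cannot be handled by shrinking $\kappa$ or $\delta$ alone; this is the only place where the compact-support hypothesis enters, and it is what permits the argument to work without a smallness condition on the VMO radius $\rho_0$.
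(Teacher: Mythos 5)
Your proposal is correct and follows essentially the same route as the paper: fix an auxiliary $q\in(2,p)$, feed Lemma \ref{small-spt-lemma} into the Fefferman--Stein sharp function theorem and the Hardy--Littlewood maximal function theorem on the space of homogeneous type, and absorb the small-prefactor term by choosing $\kappa$, then $\delta$, then $\rho_1$ in that order. Your added remarks on why the absorption is legitimate (finiteness from $u\in\sH^1_p$) and why the small time support is needed match the logic the paper uses but leaves implicit.
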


\begin{proof}
Let $q \in (2, p)$.
Recall that $|\cU|$ is comparable to $Du$.
By the mean oscillation estimates in Lemma \ref{small-spt-lemma},   we  follow the standard argument using the Fefferman-Stein sharp function theorem and the Hardy-Littlewood maximal function theorem (see, for instance,  \cite[Sec. 3.1-3.2]{Krylov} and \cite[Corollary 2.6, 2.7, and Sec. 7]{MR3812104}) to obtain
\[
\begin{split}
& \|Du\|_{L_p(\Omega)} + \lambda^{1/2} \|x_d^{-\alpha/2}u\|_{L_p(\Omega)} \\
& \leq N \Big[ \kappa^{\gamma_0} + \kappa^{-\gamma_1}\delta^{1/2-1/q} + \kappa^{-2\gamma_1}\rho_1^{(1-1/q)(2-\alpha)}  \Big]  \Big[\|Du\|_{L_p(\Omega)} + \lambda^{1/2} \|x_d^{-\alpha/2}u\|_{L_p(\Omega)}   \Big] \\
& \qquad + N \kappa^{-\gamma_1} \Big[\|F\|_{L_p(\Omega_T)} + \|g\|_{L_p(\Omega_T)}  \Big],
\end{split}
\]
where $N = N(\nu, d, p,\alpha)>0$.
We choose sufficiently small $\kappa$, then sufficiently small $\delta$ and $\rho_1$ so that
\[ N \Big[ \kappa^{\gamma_0} + \kappa^{-\gamma_1} \delta^{1/2-1/q} + \kappa^{-2\gamma_1}\rho_1^{(1-1/q)(2-\alpha)}  \Big] < 1/2.
\]
From this, \eqref{lemma-est-0508-1} follows.
\end{proof}

In the next lemma, we  prove the a priori estimate \eqref{main-est-0508} with $p \in (1,\infty)$ and no restriction on the support of solution $u$.

\begin{lemma} \label{apri-est.lemma}
Let $\nu, \rho_0 \in (0,1), \alpha \in (0,2)$ and $p \in (1, \infty)$. There exist a sufficiently small number $\delta = \delta(d, \nu, \alpha, p) >0$ and a sufficiently large number $\lambda_0 = \lambda_0 (d, \nu, \alpha, p) >0$ such that the following assertions hold.
Suppose that  \eqref{ellipticity-cond}, \eqref{c-mu.cond} and \textup{Assumption} \ref{mean-osc} \textup{($\rho_0, \delta$)} hold, $\lambda \geq \lambda_0 \rho^{\alpha-2}_{0}$, $F \in L_p(\Omega_T)^d$,   and $f = f_1 + f_2$ such that $g= x_d^{1-\alpha} |f_1| + \lambda^{-1/2}x_d^{-\alpha/2}|f_2| \in L_p(\Omega_T)$.
Then if $u \in \sH^{1}_{p}(\Omega_T)$ is weak solution of \eqref{eq6.12}, we have
\begin{equation*}
\begin{split}
& \|Du\|_{L_p(\Omega_T)} +   \sqrt{\lambda} \| x_d^{-\alpha /2} u\|_{L_p(\Omega_T)}  \leq  N \Big[\|F\|_{L_p(\Omega_T)}  +   \|g\|_{L_p(\Omega_T)} \Big],
 \end{split}
\end{equation*}
where $N = N(\nu, d, \alpha, p)>0$.
\end{lemma}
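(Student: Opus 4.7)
The plan is to bootstrap from Corollary \ref{small-support-lemma}, which handles only $p\in(2,\infty)$ and only small time supports, to the full statement in three stages: a time partition-of-unity argument for $p>2$, the already-proved Theorem \ref{L-2.theorem} for $p=2$, and a duality argument for $1<p<2$.

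For the case $p>2$, I would fix the constant $\rho_1=\rho_1(d,\nu,\alpha,p)$ provided by Corollary \ref{small-support-lemma}, set $\tau=(\rho_1\rho_0)^{2-\alpha}$, and take a smooth partition of unity $\{\eta_k\}_{k\in\bZ}$ of $\bR$ with $\sum_k\eta_k\equiv 1$, $\operatorname{supp}\eta_k$ of length at most $2\tau$, bounded overlap $M$, and $|\eta_k'|\le N\tau^{-1}$. Each $u_k:=u\eta_k$ is then supported in a time slab of length at most $2\tau$, and a direct computation shows it is a weak solution of \eqref{eq6.12} in which $F$, $f_1$, $f_2$ are replaced by $\eta_kF$, $\eta_kf_1$, and $\eta_kf_2+u\eta_k'$, respectively. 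Applying Corollary \ref{small-support-lemma} to each $u_k$, raising to the $p$-th power, summing in $k$, and using the finite-overlap bounds $M^{1-p}\le\sum_k\eta_k^p\le M$ and $\sum_k|\eta_k'|^p\le NM\tau^{-p}$ to recover $\|Du\|_p^p$ and $\|x_d^{-\alpha/2}u\|_p^p$ on the left while dominating the sums on the right, I would obtain
$$
\|Du\|_p^p+\lambda^{p/2}\|x_d^{-\alpha/2}u\|_p^p\le N\bigl(\|F\|_p^p+\|g\|_p^p+\lambda^{-p/2}\tau^{-p}\|x_d^{-\alpha/2}u\|_p^p\bigr).
$$
Since $\lambda\ge\lambda_0\rho_0^{\alpha-2}$, the product $\lambda\tau$ is bounded below by $\lambda_0\rho_1^{2-\alpha}$, so choosing $\lambda_0=\lambda_0(d,\nu,\alpha,p)$ large makes the coefficient of the offending last term at most $1/2$ of $\lambda^{p/2}$, absorbing it into the left-hand side and finishing the range $p>2$.

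The case $p=2$ is immediate from Theorem \ref{L-2.theorem}, which requires only $\lambda>0$. For $1<p<2$, I would follow verbatim the duality argument in Case 2 of the proof of Theorem \ref{sim-eqn-thrm} with the conjugate exponent $q=p/(p-1)\in(2,\infty)$: for each test pair $(G,h)$ with $G\in L_q(\Omega_T)^d$ and $\tilde h:=x_d^{1-\alpha}|h_1|+\lambda^{-1/2}x_d^{-\alpha/2}|h_2|\in L_q(\Omega_T)$, solve the backward adjoint equation with the transposed matrix $(a_{ji})$ on $\bR\times\bR^d_+$ by time reversal $t\mapsto -t$ (which preserves ellipticity and Assumption \ref{mean-osc}) together with the $p>2$ case just established; then test the two equations against each other and apply H\"older's and Hardy's inequalities exactly as in Theorem \ref{sim-eqn-thrm} to deduce
$$
\left|\int_{\Omega_T}\!\bigl(G\cdot Du-\mu(x_d)^{-1}hu\bigr)\,dz\right|\le N\bigl(\|F\|_p+\|g\|_p\bigr)\bigl(\|G\|_q+\|\tilde h\|_q\bigr),
$$
from which the $L_p$ estimate follows by arbitrariness of $G$ and $h$.

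The hard part sits in the first step: the time cutoff produces an error of size $\lambda^{-1/2}\tau^{-1}\|x_d^{-\alpha/2}u\|_p$ that is not a priori absorbable into $\sqrt{\lambda}\|x_d^{-\alpha/2}u\|_p$, and its control requires the quantitative relation $\lambda\tau\gtrsim 1$. This relation is exactly the parabolic scaling match encoded in $\lambda\ge\lambda_0\rho_0^{\alpha-2}$, which is precisely why this lower bound on $\lambda$ is forced in the statement and why $\lambda_0$ must be chosen large depending on $(d,\nu,\alpha,p)$. A secondary care point in the duality step is that the adjoint problem must actually be solved in $\sH_q^1(\Omega_T)$; this is obtained from the $p>2$ a priori estimate established in Step~1 via the same approximation and method-of-continuity arguments already used at the end of the proof of Theorem \ref{sim-eqn-thrm}, starting from the $L_2$-wellposedness in Theorem \ref{L-2.theorem}.
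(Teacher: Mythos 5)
Your proposal is correct and follows essentially the same three-step structure as the paper's proof: a time partition-of-unity argument reducing to Corollary \ref{small-support-lemma} for $p>2$, Theorem \ref{L-2.theorem} for $p=2$, and a duality argument for $1<p<2$. The only cosmetic difference is that the paper implements the $p>2$ step with a \emph{continuous} family of cutoffs $\xi(\cdot-s)$ integrated in $s$ (so the Fubini theorem replaces your discrete finite-overlap bookkeeping), but the key mechanism — the error term $\lambda^{-1/2}\tau^{-1}\|x_d^{-\alpha/2}u\|_p$ with $\tau=(\rho_0\rho_1)^{2-\alpha}$ being absorbed via $\lambda\ge\lambda_0\rho_0^{\alpha-2}$ — is identical.
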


\begin{proof}
By Theorem \ref{L-2.theorem}, the assertion of the lemma holds when $p=2$.
It then remains to consider the cases when  $p \in (2,\infty)$ and $p \in (1,2)$.

\medskip

\noindent
{\bf Case 1:} $p \in (2,\infty)$.
We only need to remove the restriction on the support of the solution $u$ assumed in Corollary \ref{small-support-lemma}.
We use a partition of unity argument in the time variable.
Let $\delta>0$ and $\rho_1>0$ be as in Corollary \ref{small-support-lemma} and let
$$
\xi=\xi(t) \in C_0^\infty( -(\rho_0\rho_1)^{2-\alpha}, (\rho_0\rho_1)^{2-\alpha})
$$
be a  non-negative cut-off function satisfying
\begin{equation} \label{xi-0702}
\int_{\bR} \xi(s)^p\, ds =1 \quad \text{and} \quad  \int_{\bR}|\xi'(s)|^p\,ds \leq \frac{N}{(\rho_0\rho_1)^{p(2-\alpha)}}.
\end{equation}
For fixed $s \in (-\infty,  \infty)$, let $u^{(s)}(z) = u(z) \xi(t-s)$ for $z = (t, x) \in \Omega_T$.
We see that $u^{(s)} \in \sH_p^1(\Omega_T)$ is a weak solution of
\[
u^{(s)}_t + \lambda c_0(z) u^{(s)} - \mu(x_d) D_i\big(a_{ij} D_j u^{(s)} - F^{(s)}_{i}\big)  =  f^{(s)}
\]
in $\Omega_T$ with the boundary condition $u^{(s)} =0$ on $\{x_d =0\}$, where
\[
F^{(s)}(z) = \xi(t-s) F(z), \quad f^{(s)}(z)   = \xi(t-s) f(z)  +  \xi'(t-s) u(z).
\]
As $\text{spt}(u^{(s)}) \subset (s -(\rho_0\rho_1)^{2-\alpha}, s+ (\rho_0\rho_1)^{2-\alpha}) \times \bR^{d}_{+}$, we apply Corollary \ref{small-support-lemma} to get
\[
\begin{split}
& \|Du^{(s)}\|_{L_p(\Omega_T)} + \sqrt{\lambda} \|x_d^{-\alpha/2}u^{(s)}\|_{L_p(\Omega_T)}  \\
& \leq N \Big( \|F^{(s)}\|_{L_p(\Omega_T)} +\|g^{(s)}\|_{L_p(\Omega_T)} + \lambda^{-1/2}\|x_d^{-\alpha/2}u\xi'(\cdot-s)\|_{L_p(\Omega_T}\big),
\end{split}
\]
where
\[g^{(s)}(z) = \big( x_d^{1-\alpha} |f_1(z)| + \lambda^{-1/2} x_d^{-\alpha/2}|f_2(z)| \big)  \xi(t-s), \quad z = (t, x', x_d) \in \Omega_T.
\]
Then, by integrating the $p$-power of this estimate with respect to $s$, we get
\begin{align}\notag
& \int_{\bR}\Big( \|Du^{(s)}\|_{L_p(\Omega_T)}^p + \lambda^{p/2} \|x_d^{-\alpha/2}u^{(s)}\|^p_{L_p(\Omega_T)}\Big)\, ds\\ \notag
&  \leq N\int_{\bR} \Big( \|F^{(s)}\|^p_{L_p(\Omega_T)} + \|g^{(s)}\|^p_{L_p(\Omega_T)} \\  \label{int-0515}
& \qquad + \lambda^{-1/2}\|x_d^{-\alpha/2}  u \xi'(\cdot-s)\|_{L_p(\Omega_T)}^p \Big)\, ds.
\end{align}
Now, by the Fubini theorem and \eqref{xi-0702}, it follows that
\[
\int_{\bR}\|Du^{(s)}\|_{L_p(\Omega_T)}^p\, ds = \int_{\Omega_T}\int_{\bR} |Du(z)|^p \xi^p(t-s)\, dsdz  = \|Du\|_{L_p(\Omega_T)}^p,
\]
and similarly
\[
\begin{split}
& \int_{\bR}\|x_d^{-\alpha/2} u^{(s)}\|_{L_p(\Omega_T)}^p\, ds = \|x_d^{-\alpha /2} u\|_{L_p(\Omega_T)}^p, \\
& \int_{\bR}\|F^{(s)}\|_{L_p(\Omega_T)}^p\, ds = \|F\|_{L_p(\Omega_T)}^p, \quad \int_{\bR}\|g^{(s)}\|_{L_p(\Omega_T)}^p\, ds = \|g\|_{L_p(\Omega_T)}^p.
\end{split}
\]
Moreover,
\[
\int_{\bR}\|x_d^{-\alpha/2}  u \xi'(\cdot-s)\|_{L_p(\Omega_T}^pds \leq N \rho_0^{p(\alpha-2)} \|x_d^{-\alpha/2} u\|_{L_p(\Omega_T)}^p,
\]
where \eqref{xi-0702} is used and $N = N(d, \nu, \alpha, p)>0$.
Then, by combining the estimates we just derived, we infer from \eqref{int-0515} that
\[
\begin{split}
& \|Du\|_{L_p(\Omega_T)} + \sqrt{\lambda} \|x_d^{-\alpha/2}u\|_{L_p(\Omega_T)}\\
&  \leq N\Big(\|F \|_{L_p(\Omega_T)} + \|g\| _{L_p(\Omega_T)} + \rho_0^{\alpha-2}\lambda^{-1/2}\|x_d^{-\alpha/2}u\|_{L_p(\Omega_T)}\Big)
\end{split}
\]
with $N=N(d, \nu, \alpha, p)$.
Now we choose $\lambda_0 = 2N$.
Then, with $\lambda \geq \lambda_0 \rho_0^{\alpha-2}$, we have $N\rho_0^{\alpha-2}\lambda^{-1/2} \leq \sqrt\lambda/2$, and consequently
\[
\begin{split}
& \|Du \|_{L_p(\Omega_T)}
+ \sqrt{\lambda} \|x_d^{-\alpha/2}u \|_{L_p(\Omega_T)}\\
&  \leq   N \|F \|_{L_p(\Omega_T)} +N \|g \| _{L_p(\Omega_T)}+\frac{\sqrt{\lambda}}{2} \|x_d^{-\alpha/2}u\|_{L_p(\Omega_T)}.
\end{split}
\]
This estimate yields \eqref{main-est-0508}.

\medskip

\noindent
{\bf Case 2:} $p \in (1,2)$.
We apply the duality argument.
This can be done exactly the same as that of the proof of Theorem \ref{sim-eqn-thrm}.
We skip the details.
\end{proof}

\begin{proof}[Proof of Theorem \ref{main-thrm}]
Let $\delta$ and $\lambda_0$ be defined in Lemma \ref{apri-est.lemma}.
Then from Lemma \ref{apri-est.lemma}, we see that \eqref{main-est-0508} holds for every weak solution $u$ of \eqref{eq6.12}.
The existence of the solution $u \in \sH_p^1(\Omega_T)$ can be obtained by the method of continuity using the solvability in Theorem \ref{sim-eqn-thrm}.
The proof of the theorem is completed.
\end{proof}

In order to prove Theorem \ref{thrm-2}, we need an additional lemma, which is a generalization of Proposition \ref{prop3}.

\begin{lemma}
                        \label{lem5.2}
Let $p_0\in (1,2)$ and suppose that \eqref{c-mu.cond},  \eqref{a-c.cond}, and \eqref{elli-cond} are satisfied in $Q_1^+$. If $u\in \sH^1_{p_0}(Q_1^+)$ is a weak solution of \eqref{eq6.12h} in $Q_1^+$, then we have
\begin{align}
                        \label{eq8.10c}
&\|u\|_{C^{1,1}(Q_{1/2}^+)}+
\|D_{x'}u\|_{C^{1,1}(Q_{1/2}^+)}+\|U\|_{C^{1,\gamma}(Q_{1/2}^+)}\notag\\
&\quad +\sqrt\lambda\|u x_d^{-\alpha/2}\|_{C^{1,1-\alpha/2}(Q_{1/2}^+)}\le N \|Du\|_{L_{p_0}(Q_1^+)},
\end{align}
where $N=N(d,\nu,\alpha,p_0)>0$, $\gamma=\min\{2-\alpha,1\}$, and $U(z)=\overline{a}_{dj}(x_d)D_j u(z)$ for $z = (z', x_d) \in Q_1^+$.
\end{lemma}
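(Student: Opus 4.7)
The plan is to deduce Lemma \ref{lem5.2} from the already established Proposition \ref{prop3}. Since the conclusion \eqref{eq8.10c} is exactly what Proposition \ref{prop3} delivers under the stronger hypothesis $u\in\sH^1_2(Q_1^+)$, the whole matter reduces to proving the following: there exists $r\in(1/2,1)$ such that $u\in\sH^1_2(Q_r^+)$, together with
\[
\|Du\|_{L_2(Q_r^+)}\le N(d,\nu,\alpha,p_0)\,\|Du\|_{L_{p_0}(Q_1^+)}.
\]
Granted this, Proposition \ref{prop3} applied on $Q_r^+$ in place of $Q_1^+$ (the constants depend only on a lower bound for $r$, via a routine scaling argument) yields \eqref{eq8.10c} on $Q_{1/2}^+$.

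To establish this $\sH^1_2$-upgrade I would carry out a finite cutoff-and-solve bootstrap. Fix nested radii $1=\rho_0>\rho_1>\cdots>\rho_K=r>1/2$ and smooth cutoffs $\eta_k\in C_c^\infty(Q_{\rho_k})$ with $\eta_k\equiv 1$ on $Q_{\rho_{k+1}}^+$. Setting $v_k=\eta_k u$ (extended by zero outside $Q_{\rho_k}^+$), a direct computation in the weak form shows that $v_k$ solves
\[
\sL_0 v_k=\mu(x_d)D_i\tilde F^{(k)}_i+\tilde f_1^{(k)}+\tilde f_2^{(k)}\quad\text{in }\Omega_\infty,\qquad v_k=0\ \text{on}\ \{x_d=0\},
\]
with $\tilde F^{(k)}_i=-\overline{a}_{ij}uD_j\eta_k$, $\tilde f_1^{(k)}=-\mu\overline{a}_{ij}D_juD_i\eta_k$, and $\tilde f_2^{(k)}=u(\eta_k)_t$. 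Since $\mu(x_d)x_d^{1-\alpha}\le Nx_d$ is bounded on $Q_1^+$, the function $x_d^{1-\alpha}\tilde f_1^{(k)}$ is pointwise controlled by $|Du|$, while $x_d^{-\alpha/2}\tilde f_2^{(k)}$ is controlled by $x_d^{-\alpha/2}|u|$. By Theorem \ref{sim-eqn-thrm} and the uniqueness of weak solutions, $v_k$ is the unique $\sH^1_p$-solution at every exponent $p$ for which the data are $L_p$-admissible. Starting from $p_0$, the parabolic Sobolev embedding in the space of homogeneous type of Section \ref{filtration.def} upgrades $u\in L_{p_k}(Q_{\rho_k}^+)$ to $u\in L_{p_{k+1}}(Q_{\rho_{k+1}}^+)$ for some $p_{k+1}>p_k$, and then Theorem \ref{sim-eqn-thrm} at exponent $p_{k+1}$ yields $v_{k+1}\in\sH^1_{p_{k+1}}(\Omega_\infty)$, so in particular $Du\in L_{p_{k+1}}(Q_{\rho_{k+2}}^+)$. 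After finitely many iterations the exponent exceeds $2$ and the claim follows.

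The main obstacle will be ensuring that each bootstrap step actually improves the integrability of $Du$ (not only of $u$), since the parabolic Sobolev embedding by itself only acts on $u$. I plan to handle this by rewriting the awkward factor $\mu\overline{a}_{ij}D_juD_i\eta_k$ in $\tilde f_1^{(k)}$ via integration by parts: modulo a controllable lower-order remainder, it is absorbed into the divergence-form part $\tilde F^{(k+1)}_i=-\overline{a}_{ij}uD_j\eta_{k+1}$ at the next, strictly smaller cutoff, where $u$ already has the improved integrability from the previous step. The weight $\mu(x_d)\lesssim x_d^\alpha$ and the compatibility between the $(x_d^{1-\alpha},x_d^{-\alpha/2})$-decomposition of the source term in $\tilde f_1,\tilde f_2$ and the scaling of $\sL_0$ are the essential structural features that keep the iteration going. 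Once $u\in\sH^1_2(Q_r^+)$ has been established, Proposition \ref{prop3} closes the proof of \eqref{eq8.10c} with no further work.
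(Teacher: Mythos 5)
Your proposal takes a genuinely different route from the paper, but it contains a gap that I do not see how to close with the tools you invoke.

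The crux is the bootstrap step. After cutting off, the source of the equation for $v_k=\eta_k u$ contains
$$
\tilde f_1^{(k)}=-\mu(x_d)\,\overline{a}_{ij}(x_d)\,D_j u\,D_i\eta_k ,
$$
and the admissibility condition for applying Theorem \ref{sim-eqn-thrm} at exponent $p_{k+1}$ requires $x_d^{1-\alpha}\tilde f_1^{(k)}\in L_{p_{k+1}}$, i.e.\ $Du\in L_{p_{k+1}}$ on the annular support of $D\eta_k$. But the whole point of the bootstrap is to produce this improved integrability of $Du$, so the step is circular. The parabolic Sobolev embedding you appeal to only upgrades the integrability of $u$, not of $Du$, so it does not break the circle. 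Your proposed repair --- integrating $\mu\,\overline{a}_{ij}D_ju\,D_i\eta_k$ by parts so that it migrates into the divergence-form slot $\tilde F^{(k+1)}_i$ --- would require differentiating $\overline a_{ij}(x_d)$ in the $x_d$-direction, and the coefficients here are only measurable in $x_d$; no such integration by parts is available. (Using the equation to trade the offending term for $\mu D_i(\eta'\overline a_{ij}D_j u)$ plus lower order terms does not help either, since it still puts $Du$ into the $F$-slot.)

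The paper's proof sidesteps all this by exploiting the fact that $\overline a_{ij}$, $\overline c_0$, $\mu$ depend only on $x_d$: one mollifies $u$ in $t$ and $x'$ only, so that $u^{(\varepsilon)}$ still solves \eqref{eq6.12h} and has arbitrarily many tangential derivatives in $L_{p_0}$. Sobolev embedding then gives $u^{(\varepsilon)}, D_{x'}u^{(\varepsilon)}\in L_\infty(Q_{3/4}^+)$, and the argument behind \eqref{eq12.20} (which needs only Hardy and Sobolev, both of which work in $L_{p_0}$) gives $U^{(\varepsilon)}=\overline a_{dj}D_ju^{(\varepsilon)}\in L_\infty$, hence $Du^{(\varepsilon)}\in L_2(Q_{3/4}^+)$ and $u^{(\varepsilon)}\in\sH^1_2(Q_{3/4}^+)$. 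Proposition \ref{prop3}, a standard iteration to bring the $L_2$-norm down to an $L_{p_0}$-norm, and passing $\varepsilon\to 0$ then finish the proof. Your overall reduction (upgrade to $\sH^1_2$ on a slightly smaller cylinder, then invoke Proposition \ref{prop3}) is the right target, but the cutoff-and-solve bootstrap you chose to reach it does not close; the tangential-mollification trick is the mechanism that actually delivers the upgrade here.
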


\begin{proof}
Let $\eta_1\in C_0^\infty((0,1/4))$ and $\eta_2\in C_0^\infty(B_1')$ be nonnegative functions with unit integral.
For $\varepsilon>0$, let
$$
u^{(\varepsilon)}(t,x)=\int_{\bR^d} u(t-\varepsilon^2 s,x'-\varepsilon y',x_d)\eta_1(s)\eta_2(y')\,dy'ds
$$
be the mollification of $u$ with respect to $t$ and $x'$.
Then we have
$\partial_t^kD_{x'}^jD_d^{l}u^{(\varepsilon)}\in L_{p_0}(Q^+_{3/4})$ for any $k,l\ge 0$, $l=0,1$, and any sufficiently small $\varepsilon>0$.
By the Sobolev embedding theorem, we get $u^{(\varepsilon)}, D_{x'}u^{(\varepsilon)}\in L_\infty(Q^+_{3/4})$. Following the proof of \eqref{eq12.20}, we also have $U^{(\varepsilon)}:=\overline{a}_{dj}(x_d)D_j u^{(\varepsilon)}\in L_\infty(Q^+_{3/4})$. In particular, we get $Du^{(\varepsilon)}\in L_2 (Q^+_{3/4})$, which also implies that $u^{(\varepsilon)}x_d^{-\alpha/2}\in L_2 (Q^+_{3/4})$ by using Hardy's inequality. Therefore, $u^{(\varepsilon)}\in \sH^1_{2}(Q_{3/4}^+)$. Now by Proposition \ref{prop3}, we have
\begin{align*}
&\|u^{(\varepsilon)}\|_{C^{1,1}(Q_{1/2}^+)}+
\|D_{x'}u^{(\varepsilon)}\|_{C^{1,1}(Q_{1/2}^+)}+\|U^{(\varepsilon)}\|_{C^{1,\gamma}(Q_{1/2}^+)}\\
&\quad +\sqrt\lambda\|u^{(\varepsilon)} x_d^{-\alpha/2}\|_{C^{1,1-\alpha/2}(Q_{1/2}^+)}\le N \|Du^{(\varepsilon)}\|_{L_{2}(Q_{2/3}^+)}.
\end{align*}
By using a standard iteration argument, we obtain
\begin{align*}
&\|u^{(\varepsilon)}\|_{C^{1,1}(Q_{1/2}^+)}+
\|D_{x'}u^{(\varepsilon)}\|_{C^{1,1}(Q_{1/2}^+)}+\|U^{(\varepsilon)}\|_{C^{1,\gamma}(Q_{1/2}^+)}\\
&\quad +\sqrt\lambda\|u^{(\varepsilon)} x_d^{-\alpha/2}\|_{C^{1,1-\alpha/2}(Q_{1/2}^+)}\le N \|Du^{(\varepsilon)}\|_{L_{p_0}(Q_{3/4}^+)},
\end{align*}
which implies \eqref{eq8.10c} after passing to the limit as $\varepsilon\to 0$.
The lemma is proved.
\end{proof}

We are now ready to give the proof of Theorem \ref{thrm-2}.

\begin{proof}[Proof of Theorem \ref{thrm-2}]
We give a sketch of the proof.
By using Theorem \ref{sim-eqn-thrm} and Lemma \ref{lem5.2}, we have the following mean oscillation estimate analogous to the one in Lemma \ref{oss-lem-vmo}.
Let $1<p_0<p_1<2$, $\lambda>0$, and $u \in \sH_{p_1, \textup{loc}}^1(\Omega_T)$ be a weak solution of \eqref{eq6.12} with $F \in L_{p_0, \textup{loc}}(\Omega_T)$ and $f = f_1 + f_2$ such that $g = x_d^{1-\alpha}|f_1| + \lambda^{-1/2} x_d^{-\alpha/2}|f_2| \in L_{p_0, \textup{loc}}(\Omega_T)$.
Then there is a constant $N = N(\nu, \alpha, d, p_1,p_2)>0$ such that
\[
\begin{split}
&\big(|\cU - (\cU)_{Q_{\kappa \rho}^+(z_0)}| \big)_{Q_{\kappa \rho}^+(z_0)} +
\big(|v - (v)_{Q_{\kappa \rho}^+(z_0)}| \big)_{Q_{\kappa \rho}^+(z_0)} \\
& \leq N \big( \kappa^{\gamma_0} + \kappa^{-\gamma_1} \delta^{1/p_0-1/p_1} \big)\Big[ \big(|v|^{p_1}\big)_{Q_{14 \rho}^+(z_0)}^{1/p_1}
+ \big(|Du|^{p_1}\big)_{Q_{14 \rho}^+(z_0)}^{1/{p_1}} \Big]
 \\
& \quad  + N \kappa^{-\gamma_1} \Big[ \big(|F|^{p_0}\big)_{Q_{14 \rho}^+(z_0)}^{1/{p_0}} + \big(|g|^{p_0}\big)_{Q_{14 \rho}^+(z_0)}^{1/{p_0}}  \Big]
\end{split}
\]
for every $z_0 \in \overline{\Omega}_T$, $\rho \in (0, \rho_0/14)$, and $\kappa \in (0,1)$, where $\cU$ and $v$ are defined as in Lemma \ref{oss-lem-vmo}, $\gamma_0 = \min\{1, 2-\alpha\}/ 4$, and $\gamma_1 = (d+2-\alpha)/p_0$. With this mean oscillation estimate in hand, we can derive the weighted a priori estimate \eqref{main-est-0508b} as in the proof of Theorem \ref{main-thrm} by using the weighted Fefferman-Stein sharp function theorem and the Hardy-Littlewood maximal function theorem (see, for instance, \cite[Corollary 2.6, 2.7, and Sec. 7]{MR3812104}) as well as a partition of unity in the time variable as in the proof of Lemma \ref{apri-est.lemma}. Finally, to show the solvability, we use the solvability in unweighted Sobolev spaces in Theorem \ref{main-thrm} and follow the argument in \cite[Sec. 8]{MR3812104}. The theorem is proved.
\end{proof}

\end{document}